\newtheorem{thm}{Theorem}
\newtheorem{lem}[thm]{Lemma}
\newtheorem{defi}[thm]{Definition}
\newtheorem{prop}[thm]{Proposition}
\newtheorem{rk}[thm]{Remark}
\newcommand{\rr}{{\mathbb{R}}}
\newcommand{\rd}{{\rr^3}}
\newcommand{\dd}{\mathrm {d}}
\newcommand{\e}{\varepsilon}
\newcommand{\indiq}{\hbox{\rm 1}{\hskip -2.8 pt}\hbox{\rm I}}
\newcommand{\Id}{\mathbf{I}_3}
\newcommand{\E}{\mathbb{E}}
\newcommand{\PR}{\mathbb{P}}
\newcommand{\diver}{\mathrm {div}}
\newcommand{\Tr}{\mathrm{Tr}}
\newcommand{\intot}{\int_0^t }
\newcommand{\intrd}{\int_{\rr^3}}
\newcommand{\intrdrd}{\int_{\rr^3\times\rr^3}}
\newcommand{\cP}{{\mathcal P}}
\newcommand{\cL}{{\mathcal L}}
\newcommand{\cW}{{\mathcal W}}
\newcommand{\cLL}{{\mathcal A}}
\newcommand{\cH}{{\mathcal H}}
\newcommand{\cT}{{\mathcal T}}
\newcommand{\Law}{{\rm Law}}
\newcommand{\tV}{\tilde V}
\newcommand{\tf}{\tilde f}
\newcommand{\tv}{\tilde v}
\newcommand{\tk}{\tilde k}
\newcommand{\tx}{\tilde x}
\newcommand{\xs}{x_*}
\newcommand{\vs}{v_*}
\newcommand{\tvs}{\tilde v_*}
\newcommand{\lps}{\langle \!\langle}
\newcommand{\rps}{\rangle\!\rangle}
\newcommand{\vip}{\vskip.13cm}
\newcommand{\bla}{\color{black}}
\begin{document}
\title[Landau equation for hard potentials]{ Stability, well-posedness and regularity of 
the homogeneous Landau equation for hard potentials }

\author{Nicolas Fournier}
\author{Daniel Heydecker}

\address{N. Fournier : Sorbonne Universit\'e, LPSM-UMR 8001, Case courrier 158,75252 Paris Cedex 05, France.}
\email{nicolas.fournier@sorbonne-universite.fr}
\address{D. Heydecker : University of Cambridge, Centre for Mathematical Sciences, Wilberforce Road, CB30WA, United Kingdom.}
\email{dh489@cam.ac.uk.}

\subjclass[2010]{82C40,60K35}

\keywords{Fokker-Planck-Landau equation, existence, uniqueness, stability, regularity,
Monge-Kantorovitch distance,
Wasserstein distance, coupling, stochastic differential equations.}

\begin{abstract}
 We establish the well-posedness and some quantitative stability of the spatially homogeneous 
Landau equation 
for hard potentials, using some specific Monge-Kantorovich cost, 
assuming only that the
initial condition is a probability measure with a finite moment of order
$p$ for some $p>2$.
As a consequence, we extend previous regularity results and 
show that all non-degenerate measure-valued solutions to the Landau equation,
with a finite initial energy,
immediately admit analytic densities with finite entropy.
Along the way, we prove that the Landau equation instantaneously creates Gaussian moments.
We also show existence of weak solutions 
under the only assumption of finite initial energy. 
\end{abstract}

\maketitle

\section{Introduction and main results}

\subsection{The Landau equation} We study the spatially homogeneous (Fokker-Planck-)Landau equation, which governs the time-evolution of the distribution $f_t, t\ge 0$ of velocities in a plasma: \begin{align} \label{LE}
\partial_t f_t(v) = \frac 1 2 
\diver_v \Big( \intrd a(v-\vs)[ f_t(\vs) \nabla f_t(v) - f_t(v) \nabla f_t(\vs)
]\,\dd \vs \Big)
\end{align} where $a$ is the nonnegative, symmetric matrix \[
a(x) = |x|^{2+\gamma} \Pi_{x^\perp}; \quad \Pi_{x^\perp}=  \Id - \frac{xx^*}{|x|^2}
\] and $\gamma \in [-3,1]$ parametrises a range of models, depending on the interactions between particles. While the most physically relevant case is $\gamma=-3$, which models Coulomb interaction, we will study the cases $\gamma \in (0,1]$ of \emph{hard potentials}, where the Landau equation 
\eqref{LE} may be understood as a limit of the Boltzmann equation in the asymptotic of grazing collisions,
see Desvillettes \cite{d2} and Villani \cite{v:nc,v:h}. 
\vip 
This equation was studied in detail by Desvillettes and Villani \cite{dv1,dv2}, who give results on existence, uniqueness, regularising effects and large-time behavior. Regarding stability, we refer to \cite{fgui}, on which the present work builds. Let us also mention the work of Carrapatoso \cite{c} on exponential convergence to equilibrium, some recent works of Chen, Li and Wu \cite{ch,ch2} and Morimoto, Pravda-Starov and Xu \cite{morimoto}
extending the regularity results, as well as the recent gradient flow approach by 
Carrillo, Delgadino, Desvillettes and Wu \cite{cddw}. \bla

\subsection{Notation}
We denote by $\cP(\rr^3)$ the set of probability measures on $\rd$, and for $p>0$, we set $\cP_p$ to be those probability measures with a finite $p^\text{th}$ moment: $\cP_p(\rd)=\{f\in\cP(\rd)\;:\;m_p(f)<\infty\}$, where $m_p(f)=\intrd |v|^p f(\dd v)<\infty$.

\vip

We will use the following family of transportation costs to measure the distance between two solutions.
For $p>0$ and $f,\tf\in\cP_p(\rd)$, we write $\cH(f,\tf)$ for the set of all couplings  
$$\cH(f,\tf) = \bigl\{    R \in \cP(\rr^3\times\rr^3) \; : \;
R  \text{ has marginals } f \text{ and } \tf \bigr\}.$$ 
 With this notation, we define the optimal transportation cost$$
\cT_{p}(f,\tf)= \inf \Big\{\intrdrd (1+|v|^p+|\tv|^p)\frac{|v-\tv|^2}{1+|v-\tv|^2}
R(\dd v,\dd \tv) \; : \; R \in \cH(f,\tf)   \Big\}.
$$

The form of this optimal transportation cost is key to our stability and uniqueness arguments; the major improvement in Theorem \ref{main} below relies on a negative term which appears due to a {\it Pozvner effect} 
of the prefactor $(1+|v|^p+|\tv|^p)$. Note that for each $p\geq 2$, there is a constant $C>0$ such that 
$|v-\tv|^p \leq C(1+|v|^p+|\tv|^p)\frac{|v-\tv|^2}{1+|v-\tv|^2}$, so that for $\cW_p$ the usual
Wasserstein distance of order $p$, we have 
$\cW_p^p \leq C \cT_p$. It can also be checked that convergence in $\cW_p$ implies convergence in $\cT_p$. It follows that both $\cW_p$ and $\cT_p$ \bla
generate the same topology, equivalent to weak convergence plus convergence of 
the $p^\text{th}$ moments.

\vip

We will also consider regularity of solutions. For $k, s\ge 0$, we define the weighted Sobolev norm 
$$ \|u\|_{H^k_s(\rd)}^2=\sum_{|\alpha|\le k}\intrd |\partial_\alpha u(v)|^2(1+|v|^2)^{s/2}  \dd v
$$ 
and the weighted Sobolev space $H^k_s(\rd)$ for those $u$ where this is finite. By an abuse of notation, we say that $f\in \cP(\rd)$ belongs to $H^k_s(\rd)$ if $f$ admits a density $u$ with respect to the Lebesgue measure with $u\in H^k_s(\rd)$, and in this case we write $\|f\|_{H^k_s(\rd)}=\|u\|_{H^k_s(\rd)}$. Similarly, we say that $f\in \cP(\rd)$ is analytic if $f$ admits an analytic density.

\vip 

We finally define the entropy $H(f)$ of a probability measure $f\in \cP(\rd)$ by 
$$ H(f)=\begin{cases} \intrd u(v)\log u(v) \dd v & \text{if }f \text{ has a density }u; \\ \infty & \text{otherwise.} \end{cases} $$

\subsection{Weak solutions}

We define, for $x\in \rd$,
\begin{equation}\label{db}
b(x)=\diver \; a(x)=-2 |x|^\gamma x.
\end{equation}
For $(f_t)_{t\geq 0}$ a family of probability measures on $\rd$ and for $p,q>0$, we say that
$(f_t)_{t\geq 0}$ belongs to $L^\infty_{loc}([0,\infty),\cP_{p}(\rd)) \cap L^1_{loc}([0,\infty),\cP_{q}(\rd))$ if
$$
\sup_{t \in [0,T]} m_p(f_t)+\int_0^T m_q(f_t) \dd t <\infty \quad \hbox{for all $T>0$.}
$$
We will use the following classical notion of weak solutions,
see Villani \cite{v:nc} and Goudon \cite{gou}.

\begin{defi}\label{ws}
Let $\gamma \in (0,1]$. We say that $(f_t)_{t\geq 0}$ is a weak solution to \eqref{LE} if it 
belongs to
$L^\infty_{loc}([0,\infty),\cP_{2}(\rd)) \cap L^1_{loc}([0,\infty),\cP_{2+\gamma}(\rd))$, if
$m_2(f_t)\leq m_2(f_0)$ for all $t\geq 0$ and if 
for all $\varphi\in C^2_b(\rr^3)$, all $t\geq 0$,
\begin{align}\label{wf}
\intrd \varphi(v)f_t(\dd v) = \intrd \varphi(v)f_0(\dd v) + \intot \intrd \intrd \cL\varphi(v,\vs) 
f_s(\dd \vs)f_s(\dd v) \dd s,
\end{align}
where
\begin{equation} \label{L}
\cL\varphi(v,\vs)= \frac 1 2 \sum_{k,\ell=1}^3 a_{k\ell}(v-\vs)\partial^2_{k\ell}\varphi(v)+ \sum_{k=1}^3
b_{k}(v-\vs)\partial_{k}\varphi(v).
\end{equation}
\end{defi}

Since $|\cL\varphi(v,\vs)|\leq C_\varphi (1+|v|+|\vs|)^{2+\gamma}$ for $\varphi\in C^2_b(\rr^3)$,
every term makes sense in \eqref{wf}. 

\subsection{Existence and properties of weak solutions}
First we summarise some results of Desvillettes and Villani.

\begin{thm}[Desvillettes \& Villani, Theorems 3 and 6 in \cite{dv1}]\label{ddv}
Fix $\gamma \in (0,1]$, $p\geq 2$ and $f_0\in \cP_p(\rd)$. \vip
(a) If $(f_t)_{t\ge 0}$ is any weak solution to \eqref{LE} starting at $f_0$, then we have conservation of the kinetic energy, i.e. $m_2(f_t)= m_2(f_0)$ for all $t\geq0$,
and the estimates $\sup_{s\in[0,\infty)} m_p(f_s)<\infty$ and
$\int_0^t m_{p+\gamma}(f_s)\dd s <\infty$ for all $t\geq 0$. Further, for all $q>0$ and 
$t_0>0$, $\sup_{t\geq t_0}m_q(f_t)<\infty$.
\vip
(b)  If $p>2$, then a weak solution starting at $f_0$ exists.
\vip 
(c)  If $p>2$ and if $f_0$ is not concentrated on a line, then there exists a weak solution 
$(f_t)_{t\ge 0}$ starting at $f_0$ such that for all $t>0$, $f_t$ has finite entropy 
$H(f_t)<\infty$ and 
\begin{equation} \label{eq: sobolev regularity} 
\hbox{for all $k, s\ge 0$ and all $t_0>0$,} 
\quad \sup_{t\ge t_0} \|f_t\|_{H^k_s(\rd)}<\infty.
\end{equation}
\end{thm}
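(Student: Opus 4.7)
The plan is to dispatch the three parts in sequence, building on moment estimates via suitable test functions and the Povzner-type cancellations characteristic of the Landau operator.

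For (a), I would first establish energy conservation by inserting a regularised version of $\varphi(v)=|v|^2$ into \eqref{wf}. Using that $\Tr a(w)=2|w|^{2+\gamma}$ and $b(w)\cdot w=-2|w|^{2+\gamma}$, a symmetrization $(v,\vs)\leftrightarrow(\vs,v)$ cancels the kinetic energy contribution, and the $m_{2+\gamma}$ integrability built into the definition of a weak solution allows the cutoff to be removed, yielding $m_2(f_t)=m_2(f_0)$. For propagation of $m_p$, I would test against $\varphi_p(v)=(1+|v|^2)^{p/2}$ and carry out the classical Povzner calculation; after symmetrization the leading contribution is a dissipative $-k_p m_{p+\gamma}(f_t)$, while the remainder is controlled by $K_p m_p(f_t)$, giving
\begin{equation*}
\frac{\dd}{\dd t} m_p(f_t) \leq K_p\, m_p(f_t) - k_p\, m_{p+\gamma}(f_t).
\end{equation*}
Combined with the log-convexity $m_p^{p+\gamma-2}\le m_2^\gamma m_{p+\gamma}^{p-2}$ and $m_2$ conserved, this closes a Grönwall argument and simultaneously yields the time integral bound on $m_{p+\gamma}$. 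The most delicate point is the instantaneous creation of all moments: here I would use a sharpened Povzner bound giving
\begin{equation*}
\frac{\dd}{\dd t} m_q(f_t) \leq C_q - c_q\, m_q(f_t)^{1+\gamma/(q-2)},
\end{equation*}
whose stable fixed-point structure forces $m_q(f_t)$ to be finite for every $t>0$ independently of whether $m_q(f_0)$ is finite.

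For (b), I would approximate by a regularised problem: replace $a(x)$ by a bounded Lipschitz variant $a_n(x)$ (e.g.\ truncating the cross-section at $|x|=n$) and $f_0$ by a smooth compactly supported density $f_0^n$, obtaining smooth strong solutions $f^n_t$ by classical parabolic theory. The moment estimates of (a) pass through uniformly in $n$, so $(f^n)_{n\geq 1}$ is tight in $C([0,T],\cP_2(\rd))$, and a standard compactness-plus-weak-limit argument extracts a subsequence converging in the weak formulation \eqref{wf}; the uniform $L^1_{loc}$ control on $m_{2+\gamma}$ is what permits passage to the limit inside the unbounded coefficients of $\cL$.

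For (c), the starting observation is that the non-concentration of $f_0$ on a line persists on some interval $[0,T_0]$ by continuity of the covariance matrix in $t$, whence the diffusion matrix $\bar a(v):=\intrd a(v-\vs)f_t(\dd \vs)$ is uniformly elliptic on compact sets. The Landau equation then reads as a non-degenerate linear parabolic equation for $f_t$ with coefficients controlled by the moments of part (a). I would combine this with the Desvillettes--Villani entropy-dissipation inequality, which bounds a weighted Fisher information by $-\frac{\dd}{\dd t}H(f_t)$; integration in time gives $H(f_t)<\infty$ for $t>0$ together with an $L^2_{loc}$ bound on $\nabla\sqrt{f_t}$. A parabolic bootstrap --- differentiating the equation, testing against $\partial^\alpha f_t (1+|v|^2)^{s/2}$, and absorbing the worst terms via the uniform ellipticity of $\bar a$ --- then yields \eqref{eq: sobolev regularity} by induction on $k$. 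The main obstacle is controlling the interaction of the weights $(1+|v|^2)^{s/2}$ with the unbounded coefficients of $\bar a$ at infinity, which is precisely where the all-orders moment creation from (a) is indispensable.
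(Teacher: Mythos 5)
Theorem~\ref{ddv} is an imported result: it is attributed in the statement to Desvillettes and Villani (Theorems~3 and~6 of \cite{dv1}), and the paper contains no proof of it. Immediately after the statement the authors only remark on minor adaptations of the cited proof — dropping the assumption that $f_0$ (resp.\ $f_t$) has a density in parts (a) and (b), and replacing ``$f_0$ has a density'' by ``$f_0$ is not concentrated on a line'' in part (c) — so there is simply no internal proof to compare your proposal against. That said, your sketch is a fair account of the classical Desvillettes--Villani strategy, and it is worth noting that a very similar moment computation to what you outline in (a) (testing $\varphi(v)=|v|^p$, exploiting the $-p\,m_{p+\gamma}$ Povzner term, interpolating via log-convexity, and using the stable fixed point of the resulting ODE) is in fact carried out explicitly in this paper, but later and for a different purpose — see Steps~1--3 of the proof of Proposition~\ref{expo}.

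Two points in your outline are more delicate than you present. First, in part (a) the differential inequality $\frac{\dd}{\dd t}m_q(f_t)\leq C_q - c_q\, m_q(f_t)^{1+\gamma/(q-2)}$ is only meaningful once one already knows that $m_q(f_t)<\infty$ on some interval; for an arbitrary weak solution with merely $m_p(f_0)<\infty$ this requires either an approximation argument or an a~priori finiteness statement, and cannot be read off directly. Second, in part (c) you assert that non-concentration on a line ``persists on some interval $[0,T_0]$ by continuity of the covariance matrix.'' This is not the mechanism one should invoke: the ellipticity obtained from non-concentration is what drives the smoothing, and once $f_t$ has a density it is trivially not concentrated on a line; the relevant (and nontrivial) converse — that a solution starting \emph{on} a line escapes it instantaneously — is handled in this paper by a separate probabilistic argument (Lemma~\ref{weak regularity}), not by a soft continuity argument. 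With these caveats your high-level plan is consistent with the cited reference, but be aware that the paper itself does not supply a proof of Theorem~\ref{ddv}.
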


Let us remark that the cited theorem makes the additional assumption in (a) that 
$f_t$ has a density for all
$t\geq 0$, but this is not used in the proof. Regarding (b), the cited theorem assumes 
that $f_0$ has a density,
but this is only required to show that the weak solution they build has
a density, for all times. Concerning (c), Desvillettes and Villani also assume that 
$f_0$ has a density, but only use that $f_0$ is not concentrated on a line,
see the remark under Lemma 9 of the cited work. To be more explicit, \emph{$f_0$ not concentrated on a line} means 
that for any $x_0,u_0 \in \rd$, 
setting $L=\{x_0+\lambda u_0 : \lambda \in \rr\}$, there holds $f_0(\rd \setminus L)>0$.

\vip 
Regarding existence, we are able to prove the following extension to (b) above, 
removing the condition that $f_0 \in \cP_{p}(\rd)$ for some $p>2$ 
and requiring only $f_0\in \cP_2(\rd)$. 

\begin{thm}\label{mainexist} Let $\gamma \in (0,1]$ and \bla $f_0\in \cP_2(\rd)$. 
There exists a weak solution to \eqref{LE} starting at $f_0$. 
\end{thm}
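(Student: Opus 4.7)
The plan is to approximate $f_0$ by probability measures with finite higher moments, invoke Theorem~\ref{ddv}(b) to obtain a sequence of weak solutions, and then extract a convergent subsequence by compactness. For $n \ge 1$, set $f_0^n(\dd v) = Z_n^{-1} \mathbf{1}_{\{|v|\le n\}} f_0(\dd v)$ with $Z_n = f_0(\{|v|\le n\})\to 1$, so that $f_0^n$ is compactly supported (hence $f_0^n \in \cP_p(\rd)$ for every $p > 2$), $f_0^n \to f_0$ weakly, and $m_2(f_0^n) \to m_2(f_0)$. Theorem~\ref{ddv}(b) yields a weak solution $(f^n_t)_{t \ge 0}$ starting at $f_0^n$, and Theorem~\ref{ddv}(a) gives the energy conservation $m_2(f^n_t) = m_2(f_0^n)$, uniformly bounded in $n$ and $t$.

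Tightness of $(f^n)$ in $C([0,T], \cP(\rd))$ follows once a uniform $L^1([0,T])$ bound on $m_{2+\gamma}(f^n_s)$ is established. Indeed, the uniform $m_2$ bound gives spatial tightness of the $f^n_t$, while the growth estimate $|\cL\varphi(v,\vs)| \le C_\varphi(1+|v|+|\vs|)^{2+\gamma}$ together with the weak formulation yields, for $\varphi \in C^2_b(\rd)$,
$$\Bigl|\intrd \varphi\, \dd f^n_t - \intrd \varphi\, \dd f^n_s \Bigr| \le C_\varphi \int_s^t \bigl(1 + m_{2+\gamma}(f^n_r)\bigr)\, \dd r,$$
which provides temporal equi-continuity.

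Extracting $f^{n_k} \to f$ in $C([0,T], \cP(\rd))$, the initial and linear terms in \eqref{wf} pass trivially to the limit. For the bilinear term $\int_0^t \intrdrd \cL\varphi\, f^n_s(\dd v)f^n_s(\dd\vs)\, \dd s$, truncate using a smooth cutoff $\chi_R \in C_c(\rd)$ with $\chi_R \equiv 1$ on $B(0,R)$: the piece $\cL\varphi\,\chi_R(v)\chi_R(\vs)$ is continuous and bounded and converges by joint weak convergence of $f^{n_k}_s \otimes f^{n_k}_s$, while the remainder is dominated by
$$\int_0^t \intrdrd \mathbf{1}_{\{|v|+|\vs|>R\}}(1+|v|+|\vs|)^{2+\gamma} f^n_s(\dd v) f^n_s(\dd \vs)\, \dd s,$$
which vanishes as $R \to \infty$ uniformly in $n$ provided that $s \mapsto m_{2+\gamma}(f^n_s)$ is uniformly integrable on $[0,T]$, uniformly in $n$ --- for example if $\sup_n \int_0^T m_{2+\gamma+\delta}(f^n_s)\, \dd s < \infty$ for some $\delta > 0$.

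The main obstacle is therefore the uniform-in-$n$ control of the $L^1([0,T])$-norms of moments of order strictly greater than $2+\gamma$, depending only on $m_2(f_0)$. The natural route is through \emph{instantaneous moment creation}: for hard potentials the Landau equation is expected to produce arbitrarily high polynomial (even Gaussian, as announced in the abstract) moments for $t>0$ from $m_2$ alone, with rates depending only on $m_2(f_0)$ and $t$. A Povzner-type computation with test function $\varphi(v)=\psi_p(|v|^2)$, $\psi_p$ a smooth approximation of $x\mapsto x^{p/2}$, should give such bounds uniformly in $n$ on any $[\e,T]$; the interval $[0,\e]$ is then absorbed via the (non-uniform) bound $m_p(f_0^n)<\infty$ coming from compact support, and $\e\to 0$ at the end. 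Once these uniform moment estimates are in place, the rest of the argument is standard compactness and truncation.
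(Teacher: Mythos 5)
Your skeleton --- truncate to compactly supported $f_0^n$, invoke Theorem~\ref{ddv}(b), extract a subsequence in $C([0,T],\cP(\rd))$, and pass to the limit via a cutoff and uniform integrability --- matches the paper's, and you correctly identify the crux: a uniform-in-$n$ bound on $\int_0^T m_{2+\gamma}(f^n_s)\,\dd s$, with associated uniform integrability of $|v|^{2+\gamma}$, depending only on $m_2(f_0)$. But the route you propose for this bound does not close. Instantaneous moment creation (Proposition~\ref{expo}) gives $m_q(f^n_s)\le C\exp(Cs^{-2/\gamma})$ uniformly in $n$ for $s>0$, yet $s\mapsto\exp(Cs^{-2/\gamma})$ is \emph{not} integrable at $s=0$, so it furnishes no time-integrable dominant. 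On $[0,\e]$ the only control you offer, $m_p(f_0^n)<\infty$ from compact support, degenerates with $n$, so you cannot pass $n\to\infty$ for fixed $\e$ with any uniform estimate of $\int_0^\e\intrdrd\cL\varphi\,f^n_s\otimes f^n_s\,\dd s$; ``absorb $[0,\e]$, then send $\e\to0$'' does not close. For the same non-integrability reason, nothing in your argument yields $\int_0^t m_{2+\gamma}(f_s)\,\dd s<\infty$ for the limit, which Definition~\ref{ws} requires, so you would not even have established that $f$ is a weak solution. Note also that for $p=2$ the Povzner dissipation vanishes by symmetry (the two terms of $\cL(|v|^2)$ cancel upon symmetrization), so there is no ``free'' $m_{2+\gamma}$ estimate from energy conservation alone.

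The missing idea is the de La Vall\'ee Poussin theorem. Since $m_2(f_0)<\infty$, there is a $C^2$ convex function $h$ with $h'(\infty)=\infty$ (normalizable so that $h''\le 1$, $h'(0)=1$) and $\intrd h(|v|^2)f_0(\dd v)<\infty$, hence $\intrd h(|v|^2)f_0^n(\dd v)\le 2\intrd h(|v|^2)f_0(\dd v)$ for $n$ large. Because $h'$ is nondecreasing, the cancellation at $p=2$ is broken in your favour: running the Povzner computation with $\varphi(v)=h(|v|^2)$ --- a strictly weaker weight than any smoothed $|v|^p$, $p>2$, which is exactly what makes it uniformly controllable at $t=0$ --- gives, uniformly in $n$,
$$\sup_{t\in[0,T]}\intrd h(|v|^2)f^n_t(\dd v)+\int_0^T\intrd|v|^{2+\gamma}h'(|v|^2)f^n_s(\dd v)\,\dd s\le K_T,$$
with $K_T$ depending only on $T$, $\gamma$ and $\intrd h(|v|^2)f_0(\dd v)$. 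Since $h'\ge 1$ this gives the $L^1$-in-time bound on $m_{2+\gamma}$; since $h'(\infty)=\infty$ it gives the uniform integrability of $|v|^{2+\gamma}$ --- on all of $[0,T]$, with no short-time singularity and no appeal to moment creation. With this in hand, your compactness and truncation steps go through essentially as written.
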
 

Let us now state the following strengthening of (c), due to Chen, Li and Xu. 

\begin{thm}[Chen, Li \& Xu, Theorem 1.1 in \cite{ch2}]\label{analytic regularity} 
 Fix $\gamma \in (0,1]$.  Let $(f_t)_{t\ge 0}$ be a weak solution to \eqref{LE} such that the estimate \eqref{eq: sobolev regularity} holds. Then $f_t$ is analytic for all $t>0$. \end{thm}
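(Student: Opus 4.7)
The plan is to upgrade the Sobolev regularity \eqref{eq: sobolev regularity} to Gevrey regularity of index $1$, which is exactly analyticity. Concretely, for fixed $t_0 > 0$ and $T > t_0$ the goal is to produce constants $C_0, \rho > 0$ (depending on $t_0, T$) such that
\[
\sup_{t \in [t_0, T]} \|\partial^\alpha f_t\|_{L^2(\rd)} \leq C_0\, \rho^{|\alpha|}\, |\alpha|!
\]
for every multi-index $\alpha \in \mathbb{N}^3$. The preparatory step is that, by \eqref{eq: sobolev regularity} together with Sobolev embedding, $f_t$ lies in the Schwartz class for every $t \geq t_0$, and the smoothed diffusion coefficient
\[
\bar a(t, v) \;=\; \intrd a(v-\vs)\, f_t(\vs)\,\dd \vs
\]
is smooth in $v$. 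Moreover, since the Sobolev bound \eqref{eq: sobolev regularity} in particular precludes concentration of $f_t$ on a line, $\bar a(t, v)$ is locally uniformly positive definite on $[t_0, T] \times K$ for every compact $K \subset \rd$.

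With these ingredients I would rewrite \eqref{LE} as the linear, locally uniformly parabolic equation
\[
\partial_t f_t \;=\; \tfrac{1}{2} \bar a_{ij}(t,v)\, \partial^2_{ij} f_t + \bar b_i(t,v)\, \partial_i f_t + \bar c(t,v)\, f_t,
\]
apply $\partial^\alpha$ to both sides, and test against a suitably localized $\partial^\alpha f_t$. The principal part produces the coercive contribution $\langle \bar a\, \nabla \partial^\alpha f_t, \nabla \partial^\alpha f_t\rangle$, while the Leibniz rule generates commutator terms
\[
\sum_{0 < \beta \leq \alpha} \binom{\alpha}{\beta}\, \partial^\beta \bar a_{ij}\, \partial^{\alpha-\beta} \partial^2_{ij} f_t
\]
and analogous expressions for $\bar b$ and $\bar c$. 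A key flexibility is that one can write $\partial^\beta \bar a = (\partial^\beta a) * f_t$ when $|\beta|$ is small (peeling derivatives off onto the polynomial $a$) and $\partial^\beta \bar a = a * \partial^\beta f_t$ when $|\beta|$ is comparable to $|\alpha|$ (moving them onto $f_t$). In both regimes the commutator is bounded by a product of two weighted Sobolev norms at order strictly less than $|\alpha|$, leading to a Cauchy--Kowalewska-type recurrence. Inducting on $n = |\alpha|$ and choosing $\rho$ small closes the estimate at the Gevrey-$1$ rate, giving analyticity of $f_t$ for every $t > 0$.

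The main obstacle is combinatorial: because $\partial^\beta \bar a$ itself involves $\beta$ derivatives of $f_t$, a naive Leibniz estimate would cost a spurious factor of $|\alpha|!$ and ruin the analytic bound. The splitting between the low-$|\beta|$ and high-$|\beta|$ regimes described above, combined with the weighted Sobolev control provided by \eqref{eq: sobolev regularity} (needed to absorb the polynomial growth $|x|^{2+\gamma}$ of the kernel $a$) and the uniform local ellipticity from the preparatory step, is what allows every binomial coefficient and every factorial to be kept exactly at the order required by $|\alpha|!$. A secondary technical nuisance is that $a$ is not analytic (only $C^2$ near the origin when $\gamma \in (0,1]$), but this is harmless since only the convolution $\bar a = a * f_t$ enters the estimate, and the smoothing effect of $f_t$ together with its rapid decay makes $\bar a$ itself analytic once analyticity of $f_t$ has been established.
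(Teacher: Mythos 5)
This statement is not proved in the paper: it is imported verbatim as a citation to Chen, Li \& Xu, Theorem~1.1 in~\cite{ch2}, and the authors rely on it as a black box. There is therefore no in-paper argument to compare your proposal against. Your sketch (energy method on the linearised parabolic form $\partial_t f = \frac12 \bar a_{ij}\partial^2_{ij}f + \bar b_i \partial_i f + \bar c f$, induction on $|\alpha|$ to obtain Gevrey-$1$ bounds, use of the weighted Sobolev bounds \eqref{eq: sobolev regularity} to absorb the polynomial growth of $a$) is broadly the same machinery that appears in the Chen--Li--Xu literature, starting with their Gevrey paper~\cite{ch} and refined in~\cite{ch2}, so at the level of strategy you are on the right track.

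There are, however, several places where your sketch either misstates facts or leaves the crucial step unaddressed. First, $a(x)=|x|^{2+\gamma}\Pi_{x^\perp}$ is \emph{not} a polynomial: it equals $|x|^{\gamma}(|x|^2\Id - xx^*)$, and the factor $|x|^\gamma$ with $\gamma\in(0,1]$ is only finitely differentiable at the origin, with $\partial^\beta a$ becoming a genuine singular distribution once $|\beta|$ exceeds $\gamma+3$. Thus ``peeling derivatives off onto the polynomial $a$ when $|\beta|$ is small'' only works for $|\beta|\le 2$ or so; the real mechanism must put essentially all derivatives on $f_t$ via $\partial^\beta\bar a = a * \partial^\beta f_t$, and the factorial bookkeeping for the resulting bilinear Leibniz sums — where \emph{both} factors carry derivatives of $f_t$ — is precisely where the proof lives, and is not supplied. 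Second, your closing remark that $\bar a = a * f_t$ is analytic ``once analyticity of $f_t$ has been established'' is circular as stated; what is actually needed is a self-consistent inductive bound on $\partial^\beta \bar a$ expressed through the bound being proved, and you do not formulate that recursion. Third, for the energy method to close globally in $v$ you need \emph{uniform} (not merely local) ellipticity of $\bar a$ on $[t_0,T]\times\rd$; this does hold under \eqref{eq: sobolev regularity} (indeed already under non-concentration on a line plus moment bounds, cf.\ Desvillettes--Villani), but your statement claims only local positivity, which would leave the behaviour as $|v|\to\infty$ unaddressed. These are not cosmetic issues: the combinatorial closure of the Gevrey recursion in the presence of the convolution nonlinearity is the actual content of~\cite{ch2}, and your sketch names the obstacle without resolving it.
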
  

Our main result on regularity is as follows, and shows that the conclusions above apply to all weak solutions to \eqref{LE}, aside from the degenerate case 
of point masses.  

\begin{thm}\label{mainregularity} Fix $\gamma \in (0,1]$. \bla 
Let $f_0\in \cP_2(\rd)$ be a probability measure which is not a 
Dirac mass, and let $(f_t)_{t\ge 0}$ be any weak \bla solution to \eqref{LE} starting at $f_0$. 
 Then 
the estimate \eqref{eq: sobolev regularity} holds and for all 
$t>0$, $f_t$ is analytic and has a finite entropy.
\end{thm}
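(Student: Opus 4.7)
The plan is to reduce the statement to existing results by establishing the key \emph{instantaneous spreading} property: for every arbitrarily small $t_0 > 0$, $f_{t_0}$ is not concentrated on a line. Granting this, Theorem~\ref{ddv}(a) gives $f_{t_0} \in \cP_q(\rd)$ for every $q > 0$, so Theorem~\ref{ddv}(c) produces some weak solution $(\tilde f_s)_{s \geq t_0}$ starting at $f_{t_0}$ which satisfies \eqref{eq: sobolev regularity} and has $H(\tilde f_s) < \infty$ for $s > t_0$. The uniqueness part of the paper's main stability result (Theorem~\ref{main}) applies because $f_{t_0} \in \bigcap_{p > 2}\cP_p(\rd)$, and identifies $(f_s)_{s \geq t_0}$ with $(\tilde f_s)_{s \geq t_0}$, so the regularity transfers. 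As $t_0 > 0$ is arbitrary, $f_t$ satisfies \eqref{eq: sobolev regularity} and $H(f_t) < \infty$ for all $t > 0$, and Theorem~\ref{analytic regularity} upgrades this to analyticity.

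The instantaneous spreading follows from a rank analysis of the covariance matrix $\Sigma_t^{ij} := \int (v - \bar v_0)_i(v - \bar v_0)_j\, f_t(\dd v)$, whose rank equals the dimension of the affine span of the support of $f_t$; the centre of mass is preserved ($\bar v_t = \bar v_0$) by a standard symmetrization of the odd function $b_i(v - v_*)$. Testing \eqref{wf} against a suitable cutoff of $\varphi(v) = (v - \bar v_0)_i(v - \bar v_0)_j$, using $a_{k\ell}(x) = |x|^{2+\gamma}\delta_{k\ell} - |x|^\gamma x_k x_\ell$ and $b(x) = -2|x|^\gamma x$, and symmetrizing in $(v, v_*)$, one obtains
\begin{equation*}
\frac{\dd}{\dd t}\Sigma_t^{ij} = \delta_{ij}\int\!\!\int |v - v_*|^{2+\gamma}\, f_t \otimes f_t - 3 \int\!\!\int |v - v_*|^\gamma (v - v_*)_i(v - v_*)_j\, f_t \otimes f_t.
\end{equation*}
If $\mathrm{rank}(\Sigma_0) \geq 2$, continuity of $t \mapsto \Sigma_t$ gives $\mathrm{rank}(\Sigma_{t_0}) \geq 2$ for small $t_0$. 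Otherwise $f_0$ is supported on a line in some direction $e_1$ through $\bar v_0$, so on the support of $f_0 \otimes f_0$ the vector $v - v_*$ is parallel to $e_1$; this makes $(v-v_*)_i(v-v_*)_j$ vanish at $t = 0$ unless $(i,j) = (1,1)$, and the formula gives $\frac{\dd}{\dd t}\Sigma_t^{ii}|_{t=0} = \int\!\!\int |v - v_*|^{2+\gamma}\, f_0 \otimes f_0 > 0$ for $i = 2, 3$, the strict positivity using that $f_0$ is not a Dirac. Hence $\Sigma_{t_0}$ attains full rank for all sufficiently small $t_0 > 0$.

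The only real technical obstacle is justifying these test-function computations, since neither $v_i$ nor $v_iv_j$ belongs to $C^2_b(\rd)$. A standard cutoff $\chi_R(v) = \chi(v/R)$ with $\chi$ smooth, compactly supported and equal to $1$ near the origin passes to the limit by dominated convergence: the bound $|v - v_*|^{2+\gamma} \leq C(1 + |v|^{2+\gamma} + |v_*|^{2+\gamma})$ for $\gamma \in (0,1]$ combined with $\sup_{s \in [0, T]} m_2(f_s) + \int_0^T m_{2+\gamma}(f_s)\, \dd s < \infty$ from Theorem~\ref{ddv}(a) provides the $L^1$ domination required. Apart from this cutoff argument, the proof is a soft combination of Theorems~\ref{ddv}, \ref{analytic regularity}, and the uniqueness furnished by Theorem~\ref{main}.
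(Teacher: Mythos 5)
Your deterministic approach via the covariance matrix
\[
\Sigma_t^{ij}=\intrd (v-\bar v_0)_i(v-\bar v_0)_j\,f_t(\dd v)
\]
is a genuinely different route from the paper's. The paper's Lemma \ref{weak regularity} (applied to $f_{t_1}\in\cP_4$ for some $t_1>0$) runs through the SDE representation of \cite{fgui}, a small-time Gaussian approximation of $(V_\e-v_0)/\sqrt\e$ uniformly over a compact set, and a geometric argument with three balls in the plane no line can meet. Your computation of $\frac{\dd}{\dd t}\Sigma_t^{ij}$ is correct, cleaner, and in fact aims at a stronger conclusion (instantaneous spreading, rather than the paper's ``$f_{t_1}$ not concentrated on a line for \emph{some} $t_1<t_0$''). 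The subsequent bootstrap through Theorems \ref{ddv}(a), \ref{ddv}(c), \ref{analytic regularity} and the uniqueness from Theorem \ref{main} is identical in both proofs.

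There is, however, a genuine gap in the covariance step. You differentiate $\Sigma_t$ at $t=0$ and write $\frac{\dd}{\dd t}\Sigma_t^{ii}\big|_{t=0}=\iint |v-v_*|^{2+\gamma}f_0\otimes f_0>0$. This requires $m_{2+\gamma}(f_0)<\infty$, but the hypothesis is only $f_0\in\cP_2(\rd)$, so the right-hand side may equal $+\infty$ and, more importantly, the map $t\mapsto \Sigma_t$ need not be differentiable at $t=0$. The cutoff/domination argument you invoke only yields the integral identity
\[
\Sigma_t^{ij}=\Sigma_0^{ij}+\int_0^t g^{ij}(s)\,\dd s,\qquad g^{ij}\in L^1_{loc}([0,\infty)),
\]
since Theorem \ref{ddv}(a) gives $\int_0^T m_{2+\gamma}(f_s)\,\dd s<\infty$ but not $m_{2+\gamma}(f_0)<\infty$; the integrand $g^{ij}(s)$ may oscillate in sign and blow up as $s\downarrow 0$, and nothing in your argument rules out $\Sigma_t^{22}=\Sigma_t^{33}=0$ on an initial interval. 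The same lack of a two-sided derivative is what also makes the ``$o(t)$'' control of the off-diagonal entries, needed to conclude $\mathrm{rank}(\Sigma_{t_0})\geq 2$, unjustified.

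The gap is repairable and the repaired argument is still simpler than the paper's. One fix is structural, exactly as in the paper: first move to an arbitrary $t_1>0$, where Theorem \ref{ddv}(a) gives $m_q(f_{t_1})<\infty$ for all $q$, and run your covariance argument from $t_1$ onwards (where $g$ is continuous and $\Sigma_t$ is $C^1$). A second, cleaner fix proves the stronger instantaneous spreading directly: suppose $f_{t_0}$ is concentrated on a line in unit direction $u_0$ for some $t_0>0$. Then $\Sigma_{t_0}=E\,u_0u_0^*$ with $E=m_2(f_0)-|\bar v_0|^2>0$, and your formula, evaluated at the interior time $t_0$ where all moments are finite and $t\mapsto\Sigma_t$ is differentiable, gives
\[
\frac{\dd}{\dd t}\bigl(u_0^*\Sigma_t u_0\bigr)\Big|_{t=t_0}
=\iint |v-v_*|^{2+\gamma}\bigl[1-3\bigr]\,f_{t_0}\otimes f_{t_0}
=-2\iint |v-v_*|^{2+\gamma}\,f_{t_0}\otimes f_{t_0}<0,
\]
since $v-v_*\parallel u_0$ on $\mathrm{supp}(f_{t_0}\otimes f_{t_0})$ and $f_{t_0}$ is not Dirac. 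But $u_0^*\Sigma_t u_0\leq \Tr\Sigma_t=E$ for all $t$, with equality at $t_0$, so $t\mapsto u_0^*\Sigma_t u_0$ has an interior maximum at $t_0$ with strictly negative derivative --- a contradiction. Hence $f_t$ is not concentrated on a line for any $t>0$, and the rest of your bootstrap goes through. I would adopt the covariance approach with this contradiction argument in place of the differentiation at $t=0$.
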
 

We emphasise that Theorem \ref{mainregularity} applies to \emph{any} weak solution, 
while Theorems \ref{ddv}-(c) and 
\ref{analytic regularity} only show that there exists such a regular solution
(see the remarks after Theorem 6 in \cite{dv1}). 
This follows from Theorem \ref{main} below, although we are not able to prove uniqueness
under the sole assumption that $f_0 \in \cP_2(\rd)$.
Let us also remark that, in the excluded case where $f_0=\delta_{v_0}$ is a point mass, 
then the unique solution is  $f_t=\delta_{v_0}$ for all $t\ge 0$ by conservation of energy 
and momentum, and so there is no hope of regularity.

\vip 

As a step towards our main stability result below, we will prove the following proposition, 
which improves on the appearance of moments in item (a) above and may be of independent interest.
\begin{prop}\label{expo}
Fix $\gamma \in (0,1]$ and consider a weak solution $(f_t)_{t\geq 0}$ to \eqref{LE}.
There are some constants $a>0$ and $C>0$, both depending only on $\gamma$ and $m_2(f_0)$, such that
$$
\intrd e^{a|v|^2}f_t(\dd v) \leq C\exp[Ct^{-2/\gamma}] \quad \hbox{for all $t>0$.}
$$
\end{prop}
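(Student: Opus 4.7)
My plan is to establish polynomial moment estimates $m_{2k}(f_t)$ with tight dependence on both $k$ and $t$, and then convert them to the Gaussian bound via the Taylor expansion $\intrd e^{a|v|^2}f_t(\dd v)=\sum_{k\geq 0}\frac{a^k}{k!}m_{2k}(f_t)$.

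The first step is a sharp Povzner-type moment inequality. Testing $\varphi(v)=|v|^{2k}$ in \eqref{wf} (after a truncation justified by the polynomial moment bounds of Theorem \ref{ddv}(a)) and symmetrizing in $(v,v_*)$, a direct computation reveals that $\cL\varphi(v,v_*)+\cL\varphi(v_*,v)$ collapses to the sum of the non-positive expression $-2k|v-v_*|^\gamma(|v|^2-|v_*|^2)(|v|^{2k-2}-|v_*|^{2k-2})$ and the non-negative Povzner error $2k(k-1)(|v|^{2k-4}+|v_*|^{2k-4})|v-v_*|^\gamma(|v|^2|v_*|^2-\langle v,v_*\rangle^2)$. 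Using the factorization $(a-b)(a^{k-1}-b^{k-1})=(a-b)^2\sum_{j=0}^{k-2}a^j b^{k-2-j}$ together with the non-degeneracy $f_t(|v_*|\leq R_0)\geq \frac12$ for $R_0=\sqrt{2m_2(f_0)}$ (via Markov), one obtains
\[
\frac{\dd}{\dd t}m_{2k}(f_t)\leq -c\,k\,m_{2k+\gamma}(f_t)+C\,k^2\, m_2(f_0)\, m_{2k+\gamma-2}(f_t),
\]
with $c,C$ depending only on $\gamma$ and $m_2(f_0)$.

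Next, Jensen's inequality applied to the probability measure $f_t$ gives $m_{2k+\gamma-2}\leq m_{2k+\gamma}^{(2k+\gamma-2)/(2k+\gamma)}$; together with Young's inequality this absorbs half the dissipation and produces $\frac{\dd}{\dd t}m_{2k}(f_t)\leq -\frac{ck}{2}m_{2k+\gamma}(f_t)+D_k$ with $D_k\leq A_1^{\,k}k^{k+\gamma/2+1}$. Jensen once more, in the form $m_{2k+\gamma}\geq m_{2k}^{1+\gamma/(2k)}$, reduces this to a closed scalar differential inequality. Comparison with the associated ODE started from $+\infty$ at $t=0$ (which handles the possibly infinite initial moments and is made legitimate for $t\geq t_0>0$ by Theorem \ref{ddv}(a), followed by $t_0\to 0$) then yields, for all $t>0$ and $k\geq 2$,
\[
m_{2k}(f_t)\leq \Big(\frac{A_0}{t}\Big)^{2k/\gamma}+A_2^{\,k}\,k^k,
\]
where $A_0,A_2$ depend only on $\gamma$ and $m_2(f_0)$: the first term is the ``from infinity'' transient solution of $y'=-\frac{ck}{2}y^{1+\gamma/(2k)}$, and the second is the steady-state value, whose two exponents collapse exactly to $k^k$ at leading order.

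Summing then gives $\intrd e^{a|v|^2}f_t(\dd v)\leq \exp\!\bigl(aA_0^{2/\gamma}t^{-2/\gamma}\bigr)+\sum_{k\geq 0}(aA_2)^k k^k/k!$. The elementary inequality $k^k/k!\leq e^k$ (since $e^k=\sum_j k^j/j!\geq k^k/k!$) shows that the second sum is finite provided $a<1/(eA_2)$, which fixes $a=a(\gamma,m_2(f_0))$ and gives a bound independent of $t$; combining both contributions yields the stated estimate. The main obstacle is establishing the moment inequality with the correct scaling in $k$: the dissipation must carry a factor $k$ (not just a constant) while the Povzner error must scale at most like $k^2$, so that Young's inequality produces $D_k$ of order $k^k$. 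The matching between this $k^k$ and $k!$ in the Taylor denominator is exactly the threshold for Gaussian integrability, and any weaker control (say $D_k\sim k^{(1+\epsilon)k}$) would break the summation.
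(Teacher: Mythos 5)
Your overall strategy is exactly the one the paper uses (and credits to Bobylev): derive a moment inequality whose dissipation is strong enough to close into an ODE in $m_{2k}$ alone, run a comparison argument to get $m_{2k}(f_t)\lesssim (A/t)^{2k/\gamma}+A^k k^k$, and then observe that the $k^k$ exactly matches the $k!$ in the Taylor series of $e^{a|v|^2}$, with convergence fixed by Stirling/the choice of small $a$. Steps 2--4 of your proposal mirror Steps 2--5 of the paper almost line-by-line (Jensen interpolation, absorption by Young, the ODE lemma ``from $+\infty$'', the summation).

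Where you diverge is in obtaining the moment inequality. You symmetrize $\cL\varphi(v,\vs)+\cL\varphi(\vs,v)$ for $\varphi=|v|^{2k}$, which produces the clean Povzner split into a non-positive term $-2k|x|^\gamma(|v|^{2k-2}-|\vs|^{2k-2})(|v|^2-|\vs|^2)$ and a non-negative error, and you then use a localization plus Markov (the mass-near-the-origin bound $f_t(|\vs|\le R_0)\ge 1/2$) to extract the dissipation $-ck\,m_{2k+\gamma}$. The paper does not symmetrize: it keeps the one-sided identity $\cL\varphi(v,\vs)=-p|x|^\gamma|v|^{p-2}(|v|^2-|\vs|^2)+\tfrac{p(p-2)}{2}|v|^{p-4}|\sigma(x)v|^2$, and uses $|x|^\gamma\ge |v|^\gamma-|\vs|^\gamma$ directly; this is simpler, needs no non-degeneracy input, and yields $\tfrac{\dd}{\dd t}m_p\le -p\,m_{p+\gamma}+p\,m_p+Cp^2(m_{p-2+\gamma}+m_{p-2}m_{2+\gamma})$. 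Your version trades the extra $+p\,m_p$ for a lower-order additive constant: your stated inequality $\tfrac{\dd}{\dd t}m_{2k}\le -ck\,m_{2k+\gamma}+Ck^2m_2m_{2k+\gamma-2}$ actually omits the contribution of the region $|v|<2R_0$, which gives an extra term of size $\sim k\,(2R_0)^{2k+\gamma}$; this cannot be absorbed into $m_{2k+\gamma-2}$ in general (it is only exponential in $k$), but it is harmless because it is dominated by the $D_k\lesssim A^k k^{k+\gamma/2+1}$ that already appears after Young's inequality. With that correction noted, your derivation closes, and the exponent book-keeping (both the $k^{k+\gamma/2+1}$ for $D_k$ and the $k^k$ for the steady state) is right. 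In short: same route as the paper, with a slightly more elaborate but equally valid way of producing the key moment ODE.
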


Since the preliminary version of this work, the first author has studied the 
Boltzmann equation with hard potentials and without cutoff, which produces some 
exponential moments of the form  $\intrd e^{a|v|^\rho}f_t(dv)$, with 
$\rho \in (\gamma,2]$ depending on the singularity of the angular 
collision kernel. In the case with cutoff, only exponential moments of 
the form $\intrd e^{a|v|^\gamma}f_t(dv)$ become finite for $t>0$, see Alonso-Gamba-Taskovic
\cite{agt}.

\subsection{Uniqueness and stability} Let us mention the following result, due to the 
first author and 
Guillin, which can be compared to our result and on which we build.

\begin{thm}[Fournier \& Guillin, Theorem 2 in \cite{fgui}]\label{uniqueness}  
Fix $\gamma \in (0,1]$ and let $f_0\in \cP(\rd)$ be such that 
\begin{equation}\label{emfg}
\mathcal{E}_\alpha(f_0)=\intrd e^{|v|^\alpha}f_0(\dd v)<\infty \quad \hbox{for some $\alpha >\gamma$.} 
\end{equation}
Then there exists a unique weak solution $(f_t)_{t\ge 0}$ to \eqref{LE} starting at $f_0$. 
Moreover, 
if $\eta \in (0,1)$, $ \lambda\in (0,\infty)$ and $T>0$,
then there exists a constant $C=C(T,\eta,\mathcal{E}_\alpha(f_0),\lambda)$ such that, 
if $(\tf_t)_{t\ge 0}$ is another solution satisfying $\sup_{t\in[0,T]} m_{2+\gamma}(\tf_t)
\le \lambda$, 
then $$ \sup_{t\in[0,T]}\mathcal{W}_2(f_t, \tf_t) \le C [\mathcal{W}_2(f_0, \tf_0)]^{1-\eta} $$ 
where $\mathcal{W}_2$ is the usual Wasserstein distance with quadratic cost.
\end{thm}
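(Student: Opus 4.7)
The plan is to build a probabilistic coupling between two solutions via the nonlinear stochastic differential equation associated with \eqref{LE}. First, writing $\sigma(x)=|x|^{1+\gamma/2}\Pi_{x^\perp}$ so that $\sigma\sigma^*=a$, I would realise any weak solution $(f_t)_{t\ge 0}$ as the time-marginals of a McKean--Vlasov process
$$
\dd V_t=\Big(\intrd \sigma(V_t-\vs)\, \cW(\dd \vs,\dd t)\Big)+\Big(\intrd b(V_t-\vs) f_t(\dd \vs)\Big)\dd t,\qquad \Law(V_t)=f_t,
$$
where $\cW$ is a white noise with intensity measure $f_s(\dd \vs)\dd s$; the equivalence between such a representation and Definition \ref{ws} is, at this point, standard and uses only the moment bounds built into the notion of weak solution.

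Given two solutions $(f_t)$ and $(\tf_t)$, the heart of the argument is to couple the two driving noises through an $\e$-optimal $\cW_2$-coupling of $f_s\otimes f_s$ and $\tf_s\otimes \tf_s$ at each time. Writing $(V_t,\tV_t)$ for the resulting coupled processes and applying It\^o's formula to $|V_t-\tV_t|^2$, the martingale part vanishes in expectation, and the drift and trace parts can be controlled by the identities
$$
|b(x)-b(y)|\le C(1+|x|^\gamma+|y|^\gamma)|x-y|,\qquad \Tr\bigl((\sigma(x)-\sigma(y))(\sigma(x)-\sigma(y))^*\bigr)\le C(1+|x|^\gamma+|y|^\gamma)|x-y|^2,
$$
which exploit the precise form of $a$. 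After integration against the $\cW_2$-optimal coupling of the starred variables, these bounds lead to a differential inequality of the type
$$
\frac{\dd}{\dd t}\E\bigl[|V_t-\tV_t|^2\bigr]\le C\,\E\Big[(1+|V_t|^\gamma+|\tV_t|^\gamma+|V_t^*|^\gamma+|\tV_t^*|^\gamma)\bigl(|V_t-\tV_t|^2+|V_t^*-\tV_t^*|^2\bigr)\Big].
$$

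To close the estimate, I would use the hypothesis $\cE_\alpha(f_0)<\infty$ for some $\alpha>\gamma$ together with a moment-production argument showing that $\intrd e^{c|v|^\alpha}f_t(\dd v)$ stays locally bounded in time. This furnishes uniform control of all polynomial moments of $f_t$, which together with the \emph{a priori} assumption $\sup_{t\le T}m_{2+\gamma}(\tf_t)\le \lambda$ allows, via H\"older's inequality with an exponent tuned by $\eta$, to absorb the $|v|^\gamma$ weights at the cost of replacing $\E[|V_t-\tV_t|^2]$ by $\E[|V_t-\tV_t|^2]^{1-\eta}$ on the right-hand side. Gronwall's lemma applied to the resulting nonlinear differential inequality then produces $\cW_2^2(f_t,\tf_t)\le C\cW_2^2(f_0,\tf_0)^{1-\eta}$ on $[0,T]$; specialising to $f_0=\tf_0$ gives uniqueness and existence is obtained by a standard Picard/compactness scheme using the same coupling.

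The main obstacle is the non-Lipschitz and degenerate behaviour of $\sigma$ near $x=0$, which forbids a direct Gronwall argument and forces the use of the weighted difference estimates above, together with a careful choice of coupling between the noises $\cW,\tilde\cW$ at each instant. The polynomial growth of $b,\sigma$ at infinity is what rules out a simple estimate with exponent $1$, and instead requires the trade-off between moments and Wasserstein distance that produces the loss $1-\eta$. Propagation of the exponential moment $\cE_\alpha$ beyond $t=0$ is the second technical point: it rests on a Poszner-type lemma for \eqref{LE}, using cancellation in the weak formulation tested against $e^{c|v|^\alpha}$ for small $c$, combined with the inequality $\alpha>\gamma$ to ensure the gain from dissipation dominates the production.
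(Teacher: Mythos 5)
This result is not proved in the present paper; it is quoted verbatim as Theorem~2 of Fournier--Guillin \cite{fgui} and used as a black box (notably to identify the marginals in the proof of Proposition~\ref{coup}). So there is no in-paper proof to compare against, only the paper's related coupling machinery (Proposition~\ref{coup}, Lemmas~\ref{ito}, \ref{cent}, \ref{mainexp}) and the original source. Your high-level plan --- realise each weak solution as the marginal law of a white-noise-driven McKean--Vlasov process, couple the two, apply It\^o to $|V_t-\tV_t|^2$, and exploit $\alpha>\gamma$ --- is the right skeleton, and your bounds on $b$ and $\sigma$ are essentially \eqref{p2}--\eqref{p4}.

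However, two of the central steps as written would not go through. First, the closing estimate: a differential inequality $u'(t)\le C\,u(t)^{1-\eta}$ gives $u(t)\le\bigl(u(0)^\eta+C\eta t\bigr)^{1/\eta}$, which stays bounded away from zero as $u(0)\to 0$ (and for $u(0)=0$ does not force $u\equiv0$), so it yields neither uniqueness nor the bound $u(t)\le C\,u(0)^{1-\eta}$. The actual mechanism in \cite{fgui} is a truncation at a level $M$: cutting the $|v|^\gamma$ weights at $M$ and controlling the tail by the exponential moment produces a \emph{linear} Gr\"onwall inequality $u'(t)\le CM^\gamma u(t)+C e^{-cM^\alpha}$, whence $u(t)\le\bigl(u(0)+Cte^{-cM^\alpha}\bigr)e^{CM^\gamma t}$, and one then optimises $M$ as a function of $u(0)$; the hypothesis $\alpha>\gamma$ enters precisely here, since $e^{CTM^\gamma}\le C_\eta\,u(0)^{-\eta}$ once $M^\alpha\sim\log(1/u(0))$. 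The $\eta$-loss lives at the level of the conclusion, not inside the differential inequality. Second, your choice of coupling --- ``an $\e$-optimal $\cW_2$-coupling \ldots at each time'' --- is precisely what the paper's Remark after Proposition~\ref{coup}, item~(iv), warns against: the covariance measure of the white noise must be $R_s=\Law(V_s,\tV_s)$ itself, because closing the estimate requires the background pair $(\vs,\tvs)$ to have the \emph{same} joint law as $(V_s,\tV_s)$ after integrating against $R_s\otimes R_s$. Only the \emph{initial} coupling $R_0$ is chosen optimally; it is then propagated by the nonlinear SDE rather than re-optimised at each instant.
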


The main result of this paper is the following, which consists in relaxing the condition \eqref{emfg}
 and in replacing, {\it via} another transportation cost, 
the H\"older dependance in the initial condition
by some Lipschitz dependance.

\begin{thm}\label{main}
Fix $\gamma \in (0,1]$ and $p>2$ and two weak solutions $(f_t)_{t\geq 0}$ and
$(\tf_t)_{t\geq 0}$ to \eqref{LE} starting from $f_0$ and $\tf_0$, both belonging to
$\cP_{p}(\rd)$.
There is a constant $C$, depending only on $p$ and $\gamma$, such that for all $t\geq 0$,
\begin{equation}
\cT_p(f_t,\tf_t)\leq \cT_p(f_0,\tf_0) \exp\Big(C \Big[1+\sup_{s\in[0,t]}m_p(f_s+\tf_s)\Big]
\Big[1+\int_0^t (1+m_{p+\gamma}(f_s+\tf_s))\dd s\Big] \Big).
\label{eq: conclusion of main} \end{equation}
\end{thm}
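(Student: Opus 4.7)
The plan is to adapt the stochastic coupling strategy of Fournier--Guillin, but exploit the moment-weighted prefactor in $\cT_p$ to replace a Hölder bound by a genuinely Lipschitz one. At the heart of the argument will be a \emph{Povzner-type} sign-beneficial term coming from applying the Landau generator to the weight $(1+|v|^p+|\tv|^p)$ when $p>2$.

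\textbf{Step 1: probabilistic representation.} Using the martingale problem (in the spirit of Funaki, Guérin, etc.), I would realise each weak solution by a nonlinear SDE. Writing $\sigma(x)$ for a square root of $a(x)$, I would construct $\rd$-valued processes $V_t,\tV_t$ with $\Law(V_t)=f_t$ and $\Law(\tV_t)=\tf_t$ satisfying
\begin{equation*}
V_t = V_0 + \int_0^t\!\!\intrd b(V_s-v)f_s(\dd v)\dd s + \int_0^t\!\!\intrd \sigma(V_s-v) N(\dd v,\dd s),
\end{equation*}
for an appropriate white-noise $N$ on $\rd\times[0,\infty)$ with intensity $f_s(\dd v)\dd s$, and similarly for $\tV_t$.

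\textbf{Step 2: coupling.} Given any $R_0\in\cH(f_0,\tf_0)$, I would draw $(V_0,\tV_0)\sim R_0$ and couple the two SDEs using a common Brownian source: at each instant $s$, pair the two collision partners $(\vs,\tvs)$ by sampling from the current joint law $R_s=\Law(V_s,\tV_s)$ (together with an independent copy of the noise). This gives a well-defined joint process whose marginals are $f_t$ and $\tf_t$.

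\textbf{Step 3: Itô and generator identification.} Applying Itô's formula to $\phi_p(v,\tv)=(1+|v|^p+|\tv|^p)\tfrac{|v-\tv|^2}{1+|v-\tv|^2}$ and taking expectation produces
\begin{equation*}
\E\phi_p(V_t,\tV_t) = \E\phi_p(V_0,\tV_0) + \int_0^t \E\bigl[\mathcal{G}_s\phi_p(V_s,\tV_s)\bigr]\dd s,
\end{equation*}
where $\mathcal{G}_s$ is the coupled generator, with the diagonal pieces being the two Landau generators acting on $v$ and $\tv$ respectively, plus a cross diffusion term of the form $\Tr\bigl[\sigma(v-\vs)\sigma^*(\tv-\tvs)\nabla^2_{v,\tv}\phi_p\bigr]$.

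\textbf{Step 4: the Povzner bound on $\mathcal{G}_s\phi_p$ — the core of the argument.} Splitting $\phi_p=\Psi\cdot\Phi$ with $\Psi=1+|v|^p+|\tv|^p$ and $\Phi=|v-\tv|^2/(1+|v-\tv|^2)$, I would apply Leibniz. The factor $\Phi$ is smooth, bounded, with bounded derivatives, which yields easy control of all terms not acting on $\Psi$. The delicate terms are those where $\mathcal{G}_s$ differentiates $|v|^p$ (or $|\tv|^p$): these carry the dangerous growth $|v|^{p+\gamma}$ but also, by the Povzner identity of order $p>2$ for Landau, produce a \emph{negative} contribution proportional to $|v|^{p+\gamma-2}\bigl(|v|^2-v\cdot\vs\bigr)$ (and its symmetrisations) that dominates the positive growth after integration against the coupling. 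The target estimate is
\begin{equation*}
\E\bigl[\mathcal{G}_s\phi_p(V_s,\tV_s)\bigr] \le C\bigl(1+m_p(f_s+\tf_s)\bigr)\bigl(1+m_{p+\gamma}(f_s+\tf_s)\bigr)\,\E\phi_p(V_s,\tV_s),
\end{equation*}
where the $m_p$ factor encodes the size of the Povzner-dominated terms and the $m_{p+\gamma}$ factor comes from the polynomial growth of $a$ and $b$.

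\textbf{Step 5: Grönwall and conclusion.} Putting $u(t)=\E\phi_p(V_t,\tV_t)$, Grönwall yields
\begin{equation*}
u(t)\le u(0)\exp\Bigl(C\bigl[1+\sup_{s\le t}m_p(f_s+\tf_s)\bigr]\int_0^t\bigl(1+m_{p+\gamma}(f_s+\tf_s)\bigr)\dd s\Bigr).
\end{equation*}
Since $u(t)\ge \cT_p(f_t,\tf_t)$ and $u(0)=\int\phi_p\,\dd R_0$, taking infimum over $R_0\in\cH(f_0,\tf_0)$ gives exactly \eqref{eq: conclusion of main}.

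\textbf{Main obstacle.} Step~4 is by far the hardest. Two issues require care: (i) the cross diffusion term $\Tr[\sigma(v-\vs)\sigma^*(\tv-\tvs)\nabla^2_{v,\tv}\phi_p]$ does not vanish and must be controlled via the precise Hessian structure of $\phi_p$ — in particular using that $|\nabla_v\nabla_{\tv}\Phi|$ is integrable in $|v-\tv|$ and that $\sigma(v-\vs)-\sigma(\tv-\tvs)$ is of order $|v-\tv|\cdot(1+|v|+|\vs|+|\tv|+|\tvs|)^{\gamma/2+o(1)}$; (ii) quantifying the Povzner gain in the averaged form required here demands that one carry through the cancellation between $|v|^p\cdot\mathcal{L}\Phi$ and $\Phi\cdot\mathcal{L}|v|^p$, identifying which combination of terms controls the outstanding $|v|^{p+\gamma}$ growth. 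Once these two points are settled, Steps~1--3 and~5 are, respectively, a standard nonlinear SDE construction, an Itô computation, and a direct Grönwall application.
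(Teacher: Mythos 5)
Your high-level architecture — nonlinear SDE coupling with the joint law as noise intensity, It\^o to compute the coupled generator, Povzner negativity from hitting the weight $|v|^p$ with the Landau generator, then Gr\"onwall — is exactly the skeleton of the paper's proof. But as written there are two genuine gaps, one of which is the main technical point of the argument.

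\textbf{Gap 1 (the $\e$-family is essential, not cosmetic).} You work throughout with the single cost $\phi_p=(1+|v|^p+|\tv|^p)\frac{|v-\tv|^2}{1+|v-\tv|^2}$ and claim in Step~4 that the negative Povzner term dominates. It does not, with this fixed cost. When you expand $\mathcal{G}_s\phi_p$, the Povzner term contributes roughly $-(p-2)(1+|v|^{p+\gamma}+|\tv|^{p+\gamma})\Phi(v,\tv)$, but the first-order (drift-difference) and cross-diffusion terms also generate contributions of the form $C(1+|\vs|^p+|\tvs|^p)(1+|v|^{p+\gamma}+|\tv|^{p+\gamma})\Phi(v,\tv)$. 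After integration against $R_s$, the competition is between $-(p-2)$ and $C(1+m_p(f_s+\tf_s))$ in front of the \emph{same} quantity $\int(1+|v|^{p+\gamma}+|\tv|^{p+\gamma})\Phi\,\dd R_s$, which is strictly larger than $u(s)$ and not controllable by it. Since $m_p\ge m_2^{p/2}$ is bounded below, the positive coefficient generically wins and Gr\"onwall does not close. The paper resolves this by introducing the whole family $c_{p,\e}(v,\tv)=(1+|v|^p+|\tv|^p)\frac{|v-\tv|^2}{1+\e|v-\tv|^2}$: the offending terms then acquire a factor $\sqrt\e$ (Lemma~\ref{cent}), one picks $\e$ small depending on $\sup_{s\le t}m_p(f_s+\tf_s)$ so that $C\sqrt\e(1+m_p)\le p-2$, runs Gr\"onwall for $\cT_{p,\e}$, and only at the end converts back to $\cT_p=\cT_{p,1}$ using $c_{p,1}\le c_{p,\e}\le\e^{-1}c_{p,1}$, paying a harmless polynomial prefactor. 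Without this you cannot prove the target differential inequality you announce at the end of Step~4.

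\textbf{Gap 2 (identifying the marginals is circular without extra moments).} In Step~1 you assert that the nonlinear SDE realises the given weak solution $(f_t)$. To identify $\Law(V_t)=f_t$ you must know that the weak solution starting from $f_0$ with the right moment bounds is unique — but that is precisely the statement being proved. Under $f_0\in\cP_p$ alone there is no available uniqueness to invoke. The paper's resolution is a two-step scheme: first prove the stability estimate under an extra initial Gaussian moment (Lemma~\ref{mainexp}), where the marginal identification in Proposition~\ref{coup} is licit by the exponential-moment uniqueness of Theorem~\ref{uniqueness}; then use Proposition~\ref{expo} (Gaussian moments are created instantaneously) to apply this on $[s,t]$ for any $s>0$, and finally send $s\downarrow0$ via the continuity $\cT_p(f_s,f_0)\to0$ (Lemma~\ref{tp convergence}) together with the relaxed triangle inequality for $\cT_p$. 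Your proposal skips this limiting argument entirely, and without it the coupling of Step~2 is not justified for general $f_0,\tf_0\in\cP_p$.

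Everything else — the covariance of the noise being $R_s$ itself (your Step~2), the splitting of the generator into diagonal and cross terms (Step~3), the structure of the final Gr\"onwall estimate (Step~5) — is right and matches the paper.
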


Together with Theorem \ref{ddv}, this shows that when $f_0 \in \cP_p(\rd)$ for some $p>2$,
\eqref{LE} has a unique weak solution
and this provides a quantitative stability estimate.

\subsection{Discussion}
This current paper is primarily concerned with stability, 
continuing the \bla previous analyses of the Cauchy problem for the Landau equation with hard potentials 
by Arsen’ev-Buryak \cite{ar}, Desvillettes-Villani \cite{dv1}, see also \cite{fgui}. 
In addition to the mathematical interest of uniqueness and stability, these are physically relevant criteria: if the equation is not well-posed, then it cannot be a complete description of the system and additional information is needed. Let us also note that stability estimates play a key role in the functional framework of 
Mischler-Mouhot \cite{mm} and Mischler-Mouhot-Wennberg \cite{mmw}  for proving \emph{propagation of chaos} for interacting particle systems, and these have been applied  to Kac's process \cite{k}. See 
also the work of Norris \cite{n}
and \cite{h1}.  
In this context, it is particularly advantageous that our result requires neither regularity nor exponential moments, as these are not readily applicable to the empirical measures of the particle system.  It is also satisfying to get a Lipschitz dependance in the initial condition, so that error terms
will not increase too much as time evolves.

\vip 

The study of stability via coupling, on which this work builds, goes back to Tanaka \cite{t} for the Boltzmann equation in the case of Maxwell molecules; let us mention the later works \cite{fm,fmi,h2} which apply the same principle in the context of hard potentials.  The same idea was applied to the Landau equation by Funaki \cite{f} and has previously been applied by the first author \cite{fgui} in the context of stability and propagation of chaos. 
 See \cite{fp} for a review of \bla coupling methods for PDEs. 

\vip

Compared to the previous literature regarding  uniqueness and stability  for the Landau equation with hard potentials, our main result is substantially stronger and more general. The  uniqueness result  
of Desvillettes and Villani \cite{dv1} requires
that the initial data $f_0$ has a density $u_0$ satisfying 
\begin{equation}\label{cudv}  
\intrd (1+|v|^2)^{p/2}u_0^2(v) \dd \bla v<\infty \quad \hbox{for some $p>15+5\gamma$,}
\end{equation}
while the result of
the \cite{fgui} recalled in Theorem \ref{uniqueness} above allows measure solutions, but requires a 
finite exponential moment. Our result therefore allows much less localisation than either of the results above, 
while also not requiring any regularity on the initial data $f_0, \tf_0$.

\vip

The case $\gamma=0$ of Maxwell molecules is particularly simple, and results of Villani \cite{v:max} show existence and uniqueness only assuming finite energy, i.e. that $f_0\in\cP_2(\rd)$.
Note that the Boltzmann equation for hard potentials with cutoff has also 
been shown to be well-posed by Mischler-Wennberg \cite{mw} as 
soon as $f_0\in\cP_2(\rd)$, by a completely different method breaking down in the case 
without cutoff.
Hence our condition on $f_0$ for Theorem \ref{main}, namely the finiteness of a moment of order $p>2$, seems almost optimal. While this is feasable for existence,
we did not manage to prove uniqueness assuming only a finite initial energy.

\vip

To summarize, our uniqueness and stability statement Theorem \ref{main} is
much stronger than the previous results and almost optimal, since we assume
that $f_0 \in \cP_{2+}(\rd)$ instead of \eqref{cudv} as in \cite{dv1} or
\eqref{emfg} as in \cite{fgui};
we slightly improve in Theorem \ref{mainexist}
the existence result of \cite{dv1}, assuming that $f_0 \in \cP_2(\rd)$
instead of $f_0 \in \cP_{2+}(\rd)$; we are able to prove in Theorem
\ref{mainregularity} the smoothness
of \emph{any} weak solution with $f_0 \in \cP_2(\rd)$,
instead of showing the existence of one smooth solution when $f_0 \in \cP_{2+}(\rd)$
as in \cite{dv1} and \cite{ch2}; and we prove the appearance of some Gaussian moments
for any weak solution with $f_0 \in \cP_2(\rd)$.
\bla

\vip

Let us finally mention that in the case $\gamma=0$, a stronger `ulta-analytic' regularity is 
known, see Morimoto, Pravda-Starov and Xu \cite{morimoto}; 
in this case, one has the advantage that 
the coefficients of \eqref{LE} are already analytic (polynomial) functions. 
Another approach to regularity results similar
is the use of Malliavin calculus, see Gu\'erin \cite{gu}.\bla

\vip
 
Finally, let us mention the current theory for other Landau equations.  In the case of 
soft potentials  
$\gamma\in (-3, 0)$,  we refer to
\cite{fgue,fh}. The Coulomb case $\gamma=-3$, which is most directly physically relevant and 
significantly more difficult, has also received significant attention, including 
by Villani \cite{v:nc}, Desvillettes \cite{d} and the first author \cite{fc}. \bla 
Let us mention a number of works
by  Guo \cite{guo}, He-Yang \cite{hy}, Golse-Imbert-Mouhot-Vasseur \cite{go} and Mouhot
\cite{mou} on the Cauchy problem for the full, spatially inhomogeneous, Landau equation.
Finally, the Landau-Fermi-Dirac equation has been recently studied by Alonso, Bagland and 
Lods \cite{abl}. \bla

\subsection{Strategy}
We emphasise that the main result is the stability and uniqueness result Theorem \ref{main}; Theorem \ref{mainregularity} about regularity will then follow from previous works. \bla
For Theorem \ref{main}, our strategy is to build on the techniques of \cite{fgui}, \cite{n} and \cite{h2}. \bla The key new 
idea is a Povzner-type inequality \cite{p}.
Considering a weighted cost of the form $(1+|v|^p+|\tv|^p)|v-\tv|^2$ instead of
$|v-\tv|^2$, 
an additional, 
negative `Povzner term' arises which produces an advantageous cancelation and allows 
us to use a Gr\"onwall inequality. We rather study (at the price of technical difficulties)
the cost 
$(1+|v|^p+|\tv|^p)|v-\tv|^2/(1+|v-\tv|^2)$, because it requires less moments to be well-defined.

\vip

In the case of the Boltzmann equation for hard potentials without cutoff \cite{h2}, this technique leads to 
stability under the assumption only of some $p^\text{th}$ moment, for some computable, but potentially large, 
$p$, improving on previous results which required exponential moments \cite{fm}. In the case of the Landau 
equation,
the calculations become more tractable; we find explicit, rather than explicit\emph{able} constants, and 
are able to use tricks of \cite{fgui} and \cite{fh}. In this context, we seek to minimise the number $p$ of 
moments required, and very delicate calculations are needed, see Lemma \ref{cent} and its proof,
to allow for any $p>2$.

\subsection{Plan of the Paper}

The paper is structured as follows.  In Section \ref{pre}, we will present some preliminary calculations which are used throughout the paper. In Section \ref{moments}, we will prove some useful moment properties, including Proposition \ref{expo}. \vip 

Section \ref{coupling} - \ref{proof of main} are devoted to the proof of our stability result Theorem \ref{main}. Section \ref{coupling} introduces the Tanaka-style coupling and presents the key estimate without proof. This allows us to prove Theorem \ref{main} in Section \ref{proof of main}, and we finally return to prove the central estimate in Section \ref{proof of cent}. Informally, the main important points of the proof are Proposition \ref{coup},
where we introduce the coupling between two given weak solutions, 
Lemmas \ref{ito} and \ref{cent}, containing the central computation, and
Lemma \ref{mainexp} where we establish the stability estimate \eqref{eq: conclusion of main} 
under some additional conditions. Since the proof of the central computation Lemma \ref{cent} is rather technical, it is deferred until Section \ref{proof of cent}.

\vip 
Section \ref{existence} consists of a self-contained proof of our existence result 
Theorem \ref{mainexist}, building only on Theorem \ref{ddv} and using the de La Vall\'ee 
Poussin theorem and a compactness argument.

\vip 
In Section \ref{pf of regularity}, we prove Theorem \ref{mainregularity} about smoothness.
We show a very mild regularity result (Lemma \ref{weak regularity}): 
solutions do not remain concentrated on lines.  This allows us to apply 
Theorems \ref{ddv}-(c) and \ref{analytic regularity}, exploiting the uniqueness from 
Theorem \ref{main}.  
\vip 
Finally, Section \ref{proof of cent} contains the proof of the estimate Lemma \ref{cent}.

\subsection*{Acknowledgements} The second author is supported by the UK Engineering and Physical Sciences Research Council (EPSRC) grant EP/L016516/1 for the University of Cambridge Centre for Doctoral Training, the Cambridge Centre for Analysis. 

\section{Preliminaries}\label{pre}

We introduce a few notation and handle some computations of constant use.
We denote by $| \cdot |$ the Euclidean norm on $\rd$ and for
$A$ and $B$ two $3\times 3$ matrices, we put $\| A \|^2 = \Tr (AA^*)$ and $\lps A,B \rps=\Tr (A B^*)$.

\subsection{A few estimates of the parameters of the Landau equation}

For $x\in \rd$, we introduce 
$$\sigma(x)=[a(x)]^{1/2}=|x|^{1+\gamma/2}\Pi_{x^\perp}.$$ 
For $x,\tx \in \rd$, it holds that
\begin{equation}\label{p1}
||\sigma(x)||^2=2|x|^{\gamma+2} \;\;\; \hbox{and}\;\;\;
\lps \sigma(x),\sigma(\tx)\rps =|x|^{1+\gamma/2}|\tx|^{1+\gamma/2}\Big(1+ \frac{(x\cdot \tx)^2}{|x|^2|\tx|^2} \Big)
\geq 2 |x|^{\gamma/2}|\tx|^{\gamma/2}(x\cdot \tx).
\end{equation}
Indeed, it suffices to justify the second assertion, and a simple computation shows that
$\Pi_{x^\perp}\Pi_{\tx^\perp}=\Id - |x|^{-2}xx^*-|\tx|^{-2}\tx\tx^*+|x|^{-2}|\tx|^{-2}(x\cdot\tx)x\tx^*$, from
which we conclude that
\begin{align*}
\lps \sigma(x),\sigma(\tx)\rps=&|x|^{1+\gamma/2}|\tx|^{1+\gamma/2}\Tr \;
(\Pi_{x^\perp}\Pi_{\tx^\perp}) 
=|x|^{1+\gamma/2}|\tx|^{1+\gamma/2}[1+|x|^{-2}|\tx|^{-2}(x\cdot\tx)^2],
\end{align*}
which is greater than 
$2 |x|^{\gamma/2}|\tx|^{\gamma/2}(x\cdot \tx)$ because $1+a^{2}\geq 2a$.

\vip

For $a,b\geq 0$ and $\alpha \in (0,1)$, there holds
\begin{equation}\label{ttaacc}
|a^\alpha-b^{\alpha}| \leq (a\lor b)^{\alpha-1}|a-b|.
\end{equation}
Indeed, if e.g. $a\geq b$, then $a^\alpha-b^{\alpha}=a^{\alpha}[1-(b/a)^{\alpha}]\leq a^{\alpha}(1-b/a)=a^{\alpha-1}(a-b)$.

\vip

For $x,\tx \in \rd$, recalling that $b(x)=-2|x|^\gamma x$, \bla we have
\begin{equation}\label{p2}
|b(x)-b(\tx)|\leq 2|x|^\gamma|x-\tx|+2|\tx| ||x|^\gamma-|\tx|^\gamma| \leq 2(|x|^\gamma+|\tx|^\gamma)|x-\tx|,
\end{equation}
because $|\tx| ||x|^\gamma-|\tx|^\gamma|\leq |\tx| (|x|\lor|\tx|)^{\gamma-1}|x-\tx|\leq |\tx|^\gamma|x-\tx|$ by 
\eqref{ttaacc}. We also have, thanks to \eqref{p1},
\begin{equation}\label{p3}
||\sigma(x)-\sigma(\tx)||^2\leq 2|x|^{\gamma+2}+2|\tx|^{\gamma+2}-4|x|^{\gamma/2}|\tx|^{\gamma/2}(x\cdot\tx)
=2||x|^{\gamma/2}x-|\tx|^{\gamma/2}\tx|^2.
\end{equation}
Proceeding as for \eqref{p2}, we deduce that
\begin{equation}\label{p4}
||\sigma(x)-\sigma(\tx)||^2\leq 2 (|x|^{\gamma/2}|x-\tx|+|\tx|||x|^{\gamma/2}-|\tx|^{\gamma/2}|)^2
\leq   2(|x|^{\gamma/2}+|\tx|^{\gamma/2}|)^2|x-\tx|^2.
\end{equation}
Finally, for $v,\vs \in \rd$, $\sigma(v-\vs)v=\sigma(v-\vs)\vs$, because $\Pi_{(v-\vs)^\perp}(v-\vs)=0$, and so 
\begin{equation}\label{tr}
|\sigma(v-\vs)v|\leq C||\sigma(v-\vs)|| (|v|\land|\vs|)\leq C |v-\vs|^{1+\gamma/2} (|v|\land|\vs|)
\leq C |v-\vs|^{\gamma/2}|v||\vs|,
\end{equation}
because $|v-\vs|(|v|\land|\vs|)\leq (|v|+|\vs|)(|v|\land|\vs|)\leq 2 |v||\vs|$.

\subsection{Transport costs}
For technical reasons, we will have to play with a larger family of transport costs. For $p>0$
and $\e> 0$, for $f,\tf\in\cP_p(\rd)$, we define
$$
\cT_{p,\e}(f,\tf)= \inf \Big\{\intrdrd c_{p,\e}(v,\tv)R(\dd v,\dd \tv) \; : \; R \in \cH(f,\tf)   \Big\},
$$
where 
\begin{equation}\label{cpe}
c_{p,\e}(v,\tv)=(1+|v|^p+|\tv|^p)\varphi_\e(|v-\tv|^2) \quad \hbox{and}\quad \varphi_\e(r)=\frac r {1+\e r}.
\end{equation}  
We have $\cT_{p}=\cT_{p,1}$.  This definition also makes sense in the case $\e=0, \phi_0(r)=r$, in which case we require $f, \tf \in \cP_{p+2}(\rd)$ for the integral to be well-defined. In either case, it is straightforward to see that there exists a coupling attaining the infimum; we refer to Villani \cite{v: ot} for many details of such costs. \bla  Since
$\varphi'_\e(r)=(1+\e r)^{-2}$ and $\varphi''_\e(r)=-2\e(1+\e r)^{-3}$,
\begin{align}\label{ve}
r\varphi_\e'(r)\leq \varphi_\e(r),\quad 0\leq \varphi_\e'(r)\leq 1 \quad \hbox{and}\quad \varphi_\e''(r)\leq 0.
\end{align}

Let us remark that the cost $c_{p,\e}$ satisfies a relaxed triangle inequality: for some $C>0$ depending
only on $p>0$ and $\e\geq 0$, for all $v,w,y\in \rd$, 
\begin{equation}\label{eq: rti}c_{p,\e}(v,y)\le C [c_{p,\e}(v,w)+c_{p,\e}(w,y) ] \bla.
\end{equation} 
 The case where $\e=0$ was treated in \cite[Section 2]{h2}. If now $\e>0$,
${\frac{1}{2}(\e^{-1}\land r)}\le \varphi_\e(r)\le (\e^{-1}\land r)$,
so that it suffices to prove \eqref{eq: rti} with the cost $c_{p,\e}(v,\tv)$ 
replaced by $(1+|v|^p+|\tv|^p)(|v-\tv|^2\land 
\e^{-1})$.
This can be deduced from the case where $\e=0$, case-by-case, depending on which of $|v-w|^2$, $|w-y|^2$, 
$|v-y|^2$ are less than $\e^{-1}$. 
\vip   

It follows that the optimal transportation costs $\cT_{p,\e}$ 
are semimetrics in that one replaces the usual triangle inequality with the bound, for all $f,g,h\in \cP_p$,
\begin{equation} \label{eq: rti2} \cT_{p,\e}(f,h)\le C [\cT_{p,\e}(f,g)+\cT_{p,\e}(g,h)].
\end{equation}

\section{Moment Properties of the Landau Equation}\label{moments}

This section is devoted to some moment estimates.
 We start with the appearance of Gaussian moments, following the strategy introduced by Bobylev \cite{b}
for the Boltzmann equation.

\begin{proof}[Proof of Proposition \ref{expo}]
We consider any weak solution $(f_t)_{t\geq 0}$ to \eqref{LE}. \bla
By Theorem \ref{ddv}, we know that $m_2(f_t)=m_2(f_0)$ for all $t\geq 0$.
If $m_2(f_0)=0$, we deduce that $f_t=\delta_0$ for all $t>0$, so the result is obvious.
We thus assume that $m_2(f_0)>0$ and, by scaling, that $m_2(f_0)=1$. During the proof, 
$C$ will denote a constant which may only depend on $\gamma$, but may vary from line to line.

\vip

{\bf Step 1.} Here we prove that for all $p\geq 2$, all $t>0$,

$$
\frac d{dt} m_p(f_t) \leq -p m_{p+\gamma}(f_t) + p m_p(f_t) + C p^2[m_{p-2+\gamma}(f_t)+m_{p-2}(f_t)
m_{2+\gamma}(f_t)].
$$

By Theorem \ref{ddv}, we know that for all $q>0$, all $t_0>0$, $\sup_{t\geq t_0}m_q(f_t)<\infty$,
so that we can apply \eqref{wf} with $\varphi(v)=|v|^p$ on $[t_0,\infty)$.
We deduce that $m_p(f_t)$ is of class $C^1$ on $(0,\infty)$ and get
\begin{align}\label{dmp}
\frac d{dt} m_p(f_t)= \intrd\intrd \cL \varphi(v,\vs) f_t(\dd \vs) f_t(\dd v) \quad
\hbox{for all $t>0$.}
\end{align}
Since \bla $\varphi(v)=|v|^p=(v_1^2+v_2^2+v_3^2)^{p/2}$,  we have 
$$
\partial_{k}\varphi(v)=p|v|^{p-2}v_k \quad \hbox{and}\quad \partial^2_{k\ell}\varphi(v)=p|v|^{p-2}\indiq_{\{k=\ell\}}+
p(p-2)|v|^{p-4}v_kv_\ell.
$$
We set $x=v-\vs$
and note that, since $\sigma(x)=[a(x)]^{1/2}$ is symmetric,
$\sum_{k,\ell=1}^3 a_{k\ell}(x)\indiq_{\{k=\ell\}} = \Tr \; a(x)=
||\sigma(x)||^2$ and 
$\sum_{k,\ell=1}^3 a_{k\ell}(x)v_kv_\ell = \sum_{k,\ell,j=1}^3 \sigma_{kj}(x)\sigma_{\ell j}(x)v_kv_\ell
=|\sigma(x)v|^2 $. Thus \bla
\begin{align}\label{tto}
\cL\varphi(v,v_*)=p|v|^{p-2}v\cdot b(x)+\frac p2|v|^{p-2}||\sigma(x)||^2
+\frac{p(p-2)}2|v|^{p-4}|\sigma(x)v|^2.
\end{align}
Recalling that $b(x)=-2|x|^\gamma x$ and that $||\sigma(x)||^2=2|x|^{\gamma+2}$ by \eqref{p1}, 
\bla we have 
$$
v\cdot b(x) + \frac12||\sigma(x)||^2=-2|x|^\gamma(v-\vs)\cdot v +|x|^\gamma (|v|^2+|\vs|^2 -2v\cdot\vs)
=-|x|^\gamma |v|^2+|x|^\gamma |\vs|^2.
$$
Since moreover $|\sigma(x)v|\leq C |x|^{\gamma/2}|v||\vs|$ by \eqref{tr}, we find that
\begin{align*}
\cL\varphi(v,v_*)\leq -p|x|^\gamma |v|^p 
+ C p^2 |x|^\gamma |v|^{p-2}|\vs|^2.
\end{align*}
Using now that $|x|^\gamma \geq |v|^\gamma -|\vs|^\gamma$ and that 
$|x|^\gamma \leq |v|^\gamma +|\vs|^\gamma$, we conclude that

\begin{align}\label{tto2}
\cL\varphi(v,v_*)\leq& -p |v|^{p+\gamma} 
+p|v|^p|\vs|^\gamma + C p^2(|v|^{p-2+\gamma}|\vs|^2+|v|^{p-2}|\vs|^{2+\gamma}).
\end{align}
Plugging this into \eqref{dmp}, we find that 

\begin{align*}
\frac d{dt} m_p(f_t)\leq& -p m_{p+\gamma}(f_t) +pm_p(f_t) m_\gamma(f_t) 
+C p^2 (m_{p-2+\gamma}(f_t) m_2(f_t) + m_{p-2}(f_t) m_{2+\gamma}(f_t)  ).
\end{align*}
The conlusion follows, since  $m_\gamma(f_t)\leq [m_2(f_t)]^{\gamma/2}=1$.

\vip
{\bf Step 2.} We now deduce that for all $p\geq 4$,

$$
\frac d{dt} m_p(f_t) \leq -p [m_p(f_t)]^{1+\gamma/(p-2)} +  p m_p(f_t)+ 
C p^2 [m_p(f_t)]^{1-(2-\gamma)/(p-2)}.
$$
For any $\beta>\alpha\geq 2$, since $|v|^2f_t(\dd v)$ is a probability measure,
$$
m_\alpha(f_t)=\intrd |v|^{\alpha-2} |v|^2 f_t(\dd v) \leq 
\Big(\intrd |v|^{\beta-2} |v|^2 f_t(\dd v) \Big)^{(\alpha-2)/(\beta-2)}
=[m_\beta(f_t)]^{(\alpha-2)/(\beta-2)}.
$$
We deduce that $m_p(f_t) \leq [m_{p+\gamma}(f_t)]^{(p-2)/(p+\gamma-2)}$, whence
$$
m_{p+\gamma}(f_t) \geq [m_p(f_t)]^{(p+\gamma-2)/(p-2)}=[m_p(f_t)]^{1+\gamma/(p-2)},
$$
that
$$
m_{p-2+\gamma}(f_t) \leq [m_p(f_t)]^{(p-4+\gamma)/(p-2)},
$$
and that
$$
m_{p-2}(f_t) m_{2+\gamma}(f_t) \leq [m_p(f_t)]^{(p-4)/(p-2)+\gamma/(p-2)}=[m_p(f_t)]^{(p-4+\gamma)/(p-2)}.
$$
This completes the step, since $(p-4+\gamma)/(p-2)=1-(2-\gamma)/(p-2)$.
\vip
{\bf Step 3.} For $u:(0,\infty)\to(0,\infty)$ of class $C^1$ satisfying, 
for some $a,b,c,\alpha,\beta>0$, for all $t>0$,
$$
u'(t) \leq -a [u(t)]^{1+\alpha} +b u(t) + c[u(t)]^{1-\beta},
$$
it holds that
$$
\forall \; t>0, \quad u(t) \leq \Big(\frac{2}{a\alpha t} \Big)^{1/\alpha} + \Big(\frac{4b}{a} \Big)^{1/\alpha}
+\Big(\frac{4c}{a} \Big)^{1/(\alpha+\beta)}.
$$

Indeed, we set $h(r)=-a r^{1+\alpha} +b r + cr^{1-\beta}$ and we observe that 
$$
h(r) \leq -\frac a2 r^{1+\alpha} \quad \hbox{for all $r\geq u_*=\max\{(4b/a)^{1/\alpha},(4c/a)^{1/(\alpha+\beta)}\}$.}
$$
We now fix $t_0>0$.

\vip

(a) If $u(t_0)\leq u_*$, we have  $u(t) \leq u_*$ for all $t\geq t_0$
because $h(u_*)\leq 0$ and $u'(t)\leq h(u(t))$.

\vip

(b) If now $u(t_0)>u_*$, we set $t_1=\inf\{t>t_0 : u(t)\leq u_*\}$ and observe that
for $t\in [t_0,t_1)$,
$$u'(t) \leq h(u(t))\leq -\frac a2 [u(t)]^{1+\alpha}.$$ 
Integrating this inequality,
we conclude that, for all $t\in [t_0,t_1)$,
$$
u(t) \leq \Big[u^{-\alpha}(t_0)+ \frac{a \alpha (t-t_0)}2\Big]^{-1/\alpha} \leq \Big[\frac 2{a\alpha (t-t_0)}
\Big]^{1/\alpha}.
$$
This implies that $t_1$ is finite.
Since now $u(t_1)=u_*$ by definition, we deduce from (a) that $u(t)\leq u_*$ for all
$t\geq t_1$.

\vip

Hence in any case, for any $t_0>0$, any $t>t_0$, $u(t)\leq \max\{u_*,[2/(a\alpha(t-t_0))]^{1/\alpha}\}$.
Letting $t_0\to 0$, we deduce that $u(t)\leq \max\{u_*,[2/(a\alpha t)]^{1/\alpha}\}$
for all $t>0$, which completes the step.

\vip
{\bf Step 4.} Using Step 2 and applying Step 3 with 
$a=p$, $b=p$, $c=Cp^2$, $\alpha=\gamma/(p-2)$ and $\beta=(2-\gamma)/(p-2)$, 
we find that for all  $p\ge 4$, all $t>0$,
$$
m_p(f_t)\leq \Big(\frac{2(p-2)}{p\gamma t} \Big)^{(p-2)/\gamma} 
+ 4^{(p-2)/\gamma} \bla
+\Big(4Cp \Big)^{(p-2)/2}.
$$
Changing again the value of $C$, we conclude that for all $p\ge 4$, all $t>0$,
$$
m_p(f_t)\leq \Big(1+\frac{2}{\gamma t} \Big)^{p/\gamma} 
+(Cp)^{p/2}.
$$

{\bf Step 5.} For $a>0$ and $t>0$, we write, using that $m_0(f_t)=m_2(f_t)=1$,
$$
\intrd e^{a|v|^2}f_t(\dd v) = \sum_{k\geq 0} \frac{a^k m_{2k}(f_t)}{k!}
=1+a + \sum_{k\geq 2} \frac{a^k m_{2k}(f_t)}{k!}.
$$
By Step 4, 
$$
\intrd e^{a|v|^2}f_t(\dd v)
\leq 1+a+\sum_{k\geq 2} \frac 1{k!}\Big[a^k\Big(1+\frac{2}{\gamma t} \Big)^{2k/\gamma} 
+ a^k(2Ck)^{k}  \Big].
$$
But  $\sum_{k\geq 2} (k!)^{-1}(x k)^k <\infty$ if $x<1/e$ by the Stirling formula.
Hence if $a<1/(2Ce)$, 
$$
\intrd e^{a|v|^2}f_t(\dd v)
\leq 1+a+\exp\Big[a\Big(1+\frac{2}{\gamma t}\Big)^{2/\gamma}\Big]+C.
$$
The conclusion follows.
\end{proof}

We next prove some technical uniform integrability property.

\begin{lem}\label{ui} 
Fix $\gamma \in (0,1]$ and $p>2$. 
Let $(f_t)_{t\ge 0}$ be a weak solution to \eqref{LE}, with initial moment $m_p(f_0)<\infty$. 
Then, for all $\epsilon>0$, there exists $M<\infty$ such that 
$$ \limsup_{t\downarrow 0} \intrd (1+|v|^p) \indiq_{\{|v|>M\}}f_t(\dd v)<\epsilon.$$ 
\end{lem}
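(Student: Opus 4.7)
The strategy is to reduce the uniform integrability claim to two simpler statements: (i) $f_t\to f_0$ weakly as $t\downarrow 0$; and (ii) $m_p(f_t)\to m_p(f_0)$ as $t\downarrow 0$. Given both, the lemma follows by a standard Portmanteau-type argument: for $\epsilon>0$, pick $M$ so large that $\int_{|v|>M}(1+|v|^p)f_0(\dd v)<\epsilon/2$ and a cutoff $\chi_M\in C_b(\rd)$ equal to $1$ on $\{|v|\le M\}$ and to $0$ on $\{|v|\ge 2M\}$. Then $(1+|v|^p)\chi_M$ is bounded continuous, so (i) gives $\intrd(1+|v|^p)\chi_M f_t(\dd v)\to \intrd(1+|v|^p)\chi_M f_0(\dd v)$, and subtracting from $1+m_p(f_t)\to 1+m_p(f_0)$ one obtains $\intrd(1+|v|^p)(1-\chi_M)f_t(\dd v)\to \intrd(1+|v|^p)(1-\chi_M)f_0(\dd v)\le \int_{|v|>M}(1+|v|^p)f_0(\dd v)<\epsilon/2$, which yields the lemma (with $2M$ in place of $M$).

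For (i), apply the weak formulation \eqref{wf} to $\varphi\in C^2_b(\rd)$ and use the remark under Definition \ref{ws}, namely $|\cL\varphi(v,\vs)|\le C_\varphi(1+|v|+|\vs|)^{2+\gamma}$:
\[
\Big|\intrd\varphi f_t(\dd v) - \intrd\varphi f_0(\dd v)\Big| \le C'_\varphi\int_0^t (1+m_{2+\gamma}(f_s))\dd s.
\]
Since $|v|^{2+\gamma}\le 1+|v|^{p+\gamma}$ and $\int_0^T m_{p+\gamma}(f_s)\dd s<\infty$ by Theorem \ref{ddv}-(a), the right-hand side vanishes as $t\downarrow 0$.

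For (ii), Fatou combined with (i) gives $\liminf_{t\downarrow 0}m_p(f_t)\ge m_p(f_0)$, so the task is the reverse inequality $\limsup_{t\downarrow 0}m_p(f_t)\le m_p(f_0)$. I apply \eqref{wf} to the truncated test function $\varphi_N(v)=|v|^p\chi(|v|/N)\in C^2_b(\rd)$, where $\chi\in C^\infty$ satisfies $\chi=1$ on $[0,1]$ and $\chi=0$ on $[2,\infty)$. Reproducing Step 1 of the proof of Proposition \ref{expo} up to \eqref{tto2} and discarding the nonpositive term $-p|v|^{p+\gamma}$, the main contribution is
\[
\cL(|v|^p)(v,\vs) \le p|v|^p|\vs|^\gamma + Cp^2\bigl(|v|^{p-2+\gamma}|\vs|^2+|v|^{p-2}|\vs|^{2+\gamma}\bigr);
\]
the extra terms introduced by $\chi(|v|/N)$ are supported on $\{N\le|v|\le 2N\}$, and their prefactors $|v|^{p-1}/N$ and $|v|^p/N^2$ are bounded by $C|v|^{p-2}$ on this annulus, so they are absorbed into the same right-hand side uniformly in $N$. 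Integrating against $f_s\otimes f_s$, using $m_q(f_s)\le m_p(f_s)^{q/p}$ for $q\le p$ to bound $m_{p-2+\gamma}, m_{p-2}, m_p, m_\gamma, m_2$ uniformly via $\sup_s m_p(f_s)<\infty$, together with $m_{2+\gamma}(f_s)\le 1+m_{p+\gamma}(f_s)$ and the local integrability of $m_{p+\gamma}$, yields a bound of the form $C_1+C_2 m_{p+\gamma}(f_s)$. Hence
\[
m_p(f_t)-m_p(f_0)\le C_1 t+C_2\int_0^t m_{p+\gamma}(f_s)\dd s \xrightarrow[t\downarrow 0]{}0,
\]
after passing $N\to\infty$ by monotone convergence on the left and dominated convergence on the right.

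The main obstacle is the uniform-in-$N$ control of the cutoff corrections in (ii); once these are dispatched, the argument reduces to Jensen's inequality and the \emph{a priori} moment estimates of Theorem \ref{ddv}-(a).
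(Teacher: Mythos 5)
Your proof is correct, and the technical core coincides with the paper's: both hinge on applying the weak formulation \eqref{wf} to a smooth truncation of $(1+|v|^p)$ whose first and second derivatives are controlled uniformly in the cutoff parameter, so that $\cL$ of the truncated function is bounded by $C(1+|v|^{p+\gamma}+|\vs|^{p+\gamma})$ and hence $\int_0^t m_{p+\gamma}(f_s)\,\dd s$ provides the vanishing bound as $t\downarrow 0$. The difference is purely organizational. You factor the lemma through two well-known facts --- (i) weak convergence $f_t\to f_0$, and (ii) convergence of the $p$-th moment $m_p(f_t)\to m_p(f_0)$ --- and then invoke the standard fact that these two together give uniform integrability of $(1+|v|^p)$ at $t=0^+$. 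For (ii), you cleverly split the two directions: the upper semicontinuity of $m_p$ comes from the weak formulation with the truncated test function (only a one-sided bound on $\cL\varphi_N$ is needed, so the favourable $-p|v|^{p+\gamma}$ term can be discarded), while the lower semicontinuity is free from Fatou and (i). The paper instead runs a single, more direct argument: it establishes the two-sided bound $|\int\varphi_M(f_t-f_0)|\le C\int_0^t m_{p+\gamma}(f_s)\,\dd s$ uniformly in $M$, picks $t_0$ to make this $\le\epsilon/3$, and combines with the tail decay of $f_0$ using the difference $\varphi_{M'}-\varphi_{M/2}$, sending $M'\to\infty$ at the end. Your version is slightly more modular and makes the reliance on the conserved/propagated moments from Theorem \ref{ddv}-(a) transparent; the paper's version is shorter and avoids stating (i)--(ii) as separate claims. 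Either is acceptable; the underlying estimate and the use of Theorem \ref{ddv}-(a) are the same.
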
 

\begin{proof} Let $\psi: \mathbb{R} \rightarrow [0,1]$ be a smooth function such that 
$\indiq_{\{r\leq 1\}} \leq \psi (r)\leq \indiq_{\{r \le 2\}}$. Now, for $M\ge 1$, define  
$\chi_M:\rd \rightarrow [0,1]$ 
by $\chi_M(v)=\psi(|v|/M)$; these functions are smooth, and satisfy 
\begin{equation*}
|v||\nabla \chi_M(v)|\le C; \qquad |v|^2|\nabla^2 \chi_M(v)|\le C
\end{equation*} 
for some constant $C$, independent of $M$.
A rough computation using that $|b(x)|\leq C|x|^{1+\gamma}$
and $||a(x)||\leq C |x|^{2+\gamma}$ shows that the smooth 
functions $\varphi_M(v)=(1+|v|^p)\chi_M(v)$ satisfy  
\begin{align*} 
|\cL \varphi_M(v,\vs)|\le& C[|b(v-\vs)| |\nabla\varphi_M(v)|+ ||a(v-\vs)|||\nabla^2\varphi_M(v)| ]
\\
\leq & C [|v-\vs|^{1+\gamma} (1+|v|^{p-1})+ |v-\vs|^{2+\gamma} (1+|v|^{p-2})]\\   
\leq& C (1+|v|^{p+\gamma}+|\vs|^{p+\gamma})
\end{align*} 
\bla for some $C$ which does not depend on $M$.   
It follows from \eqref{wf} \bla that, for all $M$, 
$$ \Big|\intrd \varphi_M(v)(f_t-f_0)(\dd v)\Big|\le C\intot m_{p+\gamma}(f_s)\dd s.$$ 
Now, fix $\epsilon>0$. Since $m_{p+\gamma}(f_s)$ is locally integrable by Theorem \ref{ddv}, 
there is $t_0>0$ such that for all $t\in [0,t_0]$ and all $M\ge 1$, 
\begin{equation}\label{ap1}
\Big|\intrd \varphi_M(v)(f_t-f_0)(\dd v)\Big|\le \frac\e3.
\end{equation}
We next fix $M\geq 1$ \bla such that 
\begin{equation}\label{ap2}
\intrd (1+|v|^p)\indiq_{\{|v|\geq M/2\}}f_0(\dd v)<\frac{\epsilon}{3}.
\end{equation}
For \bla any $t \in [0,t_0]$, any $M'\ge M$, 
\begin{align*}
\intrd (1&+|v|^p)\indiq_{\{M<|v|\le M'\}}f_t(\dd v)\le \intrd (\varphi_{M'}-\varphi_{M/2})(v)f_t(\dd v)\\
=& \intrd \varphi_{M'}(v)(f_t-f_0)(\dd v)-\intrd \varphi_{M/2}(v)(f_t-f_0)(\dd v) + 
\intrd (\varphi_{M'}-\varphi_{M/2})(v)f_0(\dd v) \leq \e.
\end{align*}
For the two first terms, we used \eqref{ap1}, while for the last term,  
we used that $(\varphi_{M'}-\varphi_{M/2})(v)\leq (1+|v|^p)\indiq_{\{|v|\geq M/2\}}$ and \eqref{ap2}.
Taking the limit $M'\rightarrow \infty$ now gives the result.

\end{proof}

\section{Tanaka-style Coupling of Landau Processes}\label{coupling}

In the spirit of Tanaka \cite{t} for the Boltzmann equation, see Funaki \cite{f} and Gu\'erin
\cite{g} for the Landau equation, we will use the following coupling between solutions. 
For $E=\rd$ or $\rd\times\rd$, we denote by $C^2_p(E)$ the set of $C^2$ functions on $E$
of which the derivatives of order $0$ to $2$ have at most polynomial growth.

\begin{prop}\label{coup}
Fix $\gamma \in (0,1]$, consider two weak solutions $(f_t)_{t\geq 0}$ and $(\tf_t)_{t\geq 0}$ to \eqref{LE} 
such that $\intrd e^{a |v|^2}(f_0+\tf_0)(\dd v)<\infty$ for some $a>0$, and fix $R_0 \in \cH(f_0,\tf_0)$. 
There exists a family $(R_t)_{t\geq 0}$ of probability measures on $\rd\times\rd$ such that
for all $t\geq 0$, $R_t \in \cH(f_t,\tf_t)$ and for all $\psi \in C^2_p(\rd\times\rd)$,
\begin{align}\label{wec}
\intrdrd \psi(v,\tv) R_t(\dd v,\dd \tv)=& \intrdrd \psi(v,\tv) R_0(\dd v,\dd \tv)\\
&+ \intot \intrdrd  \intrdrd  \cLL\psi(v,\vs,\tv,\tvs) R_s(\dd \vs,\dd \tvs) R_s(\dd v,\dd \tv)
\dd s,\notag
\end{align}
where
\begin{align*}
\cLL\psi(v,\tv,\vs,\tvs)=&\sum_{k=1}^3 [b_k(v-\vs) \partial_{v_k}\psi(v,\tv) + b_k(\tv-\tvs)
 \partial_{\tv_k} \psi(v,\tv) ] \\
&+\frac{1}{2}\sum_{k,\ell=1}^3 [a_{k\ell}(v-\vs)\partial^2_{v_kv_\ell}\psi(v,\tv)+ 
a_{k\ell}(\tv-\tvs)\partial^2_{\tv_k\tv_\ell}\psi(v,\tv)]\\
& +\sum_{j,k,\ell=1}^3 \sigma_{k j}(v-\vs)\sigma_{\ell j}(\tv-\tvs) \partial^2_{v_k \tv_\ell}\psi(v,\tv).
\end{align*}
\end{prop}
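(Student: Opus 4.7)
My plan is to follow the Tanaka-style approach of Funaki and Gu\'erin alluded to at the beginning of the section: realize each of the weak solutions $(f_t)$ and $(\tf_t)$ as the time-marginal law of a nonlinear diffusion, and couple these two diffusions by driving them by a \emph{common} space-time white noise. The cross term $\sigma_{kj}(v-\vs)\sigma_{\ell j}(\tv-\tvs)\partial^2_{v_k\tv_\ell}\psi$ appearing in $\cLL$ is exactly the It\^o covariation that arises from two stochastic integrals taken against the same noise; this is the structural reason for the form of $\cLL$.

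Concretely, on a sufficiently rich probability space I would build a pair of processes $(V_t,\tV_t)_{t\ge 0}$ with $(V_0,\tV_0)\sim R_0$, together with a space-time white noise $W$ on $\rd\times\rd\times[0,\infty)$ of intensity $R_s(\dd \vs,\dd\tvs)\,\dd s$, where $R_s=\Law(V_s,\tV_s)$, satisfying
$$ V_t = V_0 + \intot\!\intrdrd b(V_s-\vs)\,R_s(\dd \vs,\dd\tvs)\,\dd s + \intot\!\intrdrd \sigma(V_s-\vs)\,W(\dd \vs,\dd\tvs,\dd s), $$
and symmetrically for $\tV_t$ with $\tV_s-\tvs$ in place of $V_s-\vs$. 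Applying It\^o to test functions of $V_t$ alone and using that the first marginal of $R_s$ is $\Law(V_s)$, we see that $\Law(V_t)$ is itself a weak solution to \eqref{LE} starting from $f_0$; under the exponential moment assumption in the hypothesis of the proposition, the uniqueness statement of Theorem \ref{uniqueness} forces $\Law(V_t)=f_t$, and symmetrically $\Law(\tV_t)=\tf_t$. Hence $R_t\in\cH(f_t,\tf_t)$ for every $t\ge 0$ as required.

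Granted this construction, \eqref{wec} follows from It\^o's formula applied to $\psi(V_t,\tV_t)$ for $\psi\in C^2_p(\rd\times\rd)$: the drift of $\dd\psi(V_t,\tV_t)$ is the sum of the two first-order $b$-contributions, the two pure second-order $a$-contributions (one from each diffusion's quadratic variation), and the mixed second-order term $\sigma_{kj}(V_t-\vs)\sigma_{\ell j}(\tV_t-\tvs)\partial^2_{v_k\tv_\ell}\psi$, which comes from the cross quadratic variation of the two It\^o integrals against the same noise $W$. Integrated against $R_s(\dd \vs,\dd\tvs)$ this drift is precisely $\cLL\psi$. The Gaussian moment bound of Proposition \ref{expo}, together with the polynomial growth of $\psi$ and its derivatives, renders every integral absolutely convergent and ensures that the martingale parts have zero expectation; taking expectations yields \eqref{wec}.

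The main obstacle is the simultaneous construction of $(V,\tV)$ and of $W$, since the intensity of $W$ is itself the unknown law $\Law(V_s,\tV_s)$. I expect to handle this either by a Picard-type fixed point on the joint flow of laws in a suitable Wasserstein metric on $\bigcup_{t\ge 0}\cH(f_t,\tf_t)$, or more robustly by approximation: mollify the diffusion coefficient or pass through an $N$-particle system coupled pairwise, establish tightness of the approximating joint laws using the uniform Gaussian moments from Proposition \ref{expo}, and identify every limit point as a solution of the martingale problem associated with $\cLL$. The uniform integrability supplied by Proposition \ref{expo} is precisely what lets one pass to the limit in the nonlocal, superquadratic terms of the SDE.
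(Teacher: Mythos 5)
Your proposal is correct and follows essentially the same route as the paper: construct the pair $(V_t,\tV_t)$ as a nonlinear diffusion driven by a common white noise whose intensity is the unknown joint law, identify the marginals via the uniqueness result Theorem~\ref{uniqueness}, and obtain \eqref{wec} by It\^o's formula; the paper implements the construction precisely by truncating the coefficients $b$ and $\sigma$ to make them globally Lipschitz, solving the resulting nonlinear SDE, and passing to the limit by compactness using uniform-in-$k$ moment propagation, which is one of the two strategies you sketch. The one imprecision worth noting is that the uniform moment bounds you attribute to Proposition~\ref{expo} are not quite what is needed: that proposition concerns the Landau solutions $f_t$ (with constants blowing up as $t\downarrow 0$), whereas the compactness argument requires moments of the approximating joint laws $R^k_t$, uniformly in the approximation parameter and locally uniformly in $t\ge 0$ including $t=0$; the paper obtains these by a direct propagation argument on the approximate equation, using the assumed Gaussian moment of $R_0$, analogous to Step~1 of the proof of Proposition~\ref{expo}.
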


\begin{rk} Let us make the following observations. 
\vip
(i) This is the key coupling of $f_t, \tf_t$ which we will use,  for some
well-chosen $R_0$, to obtain an upper bound of 
$\cT_p(f_t, \tf_t)$ to prove Theorem \ref{main}. 
\vip
(ii) This equation has a natural probabilistic meaning:  the equation governing
$(R_t)_{t\geq 0}$ is the Kolmogorov equation for 
the solution $(V_t,\tV_t)_{t\geq 0}$ to the nonlinear stochastic differential equation 
\begin{equation*} \label{full sde}\begin{cases} V_t=V_0+\int_0^t \intrdrd b(V_s-\vs) 
R_s(\dd\vs,\dd\tvs)\dd s+ \int_0^t \intrdrd \sigma(V_s-\vs)N(\dd\vs, \dd\tvs,\dd s) ; \\ 
\tV_t=\tV_0+\int_0^t \intrdrd b(\tV_s-\tvs) R_s(\dd\vs,\dd\tvs)\dd s
+ \int_0^t \intrdrd \sigma(\tV_s-\tvs)N(\dd\vs, \dd\tvs,\dd s) ; \\ 
R_t=\Law(V_t,\tV_t) \end{cases} \end{equation*} 
where $N=(N^1, N^2,N^3)$ is a  $3D$-white noise 
on $\rd\times\rd\times[0,\infty)$ with covariance measure
$R_s(\dd \vs,\dd\tvs)\dd s$; 
see Walsh \cite{w}. We think of this nonlinear equation as describing the 
time evolution of the velocities $(V_t,\tV_t)_{t\geq0}$ 
of a `typical' pair of particles, with $V_t \sim f_t$ and $\tV_t \sim \tf_t$.
\vip
 (iii) Since $R_s \in \cH(f_s,\tf_s)$, we have $\intrdrd b(V_s-\vs) 
R_s(\dd\vs,\dd\tvs)\dd s=\intrd b(V_s-\vs) 
f_s(\dd\vs)\dd s$. Similarly,
$\int_0^t \intrdrd \sigma(V_s-\vs)N(\dd\vs, \dd\tvs,\dd s) =\int_0^t \intrd \sigma(V_s-\vs)
W(\dd\vs,\dd s)$, for some $3D$-white noise on $\rd\times[0,\infty)$ of  covariance measure 
$f_s(\dd \vs) \dd s$. Hence {\em in law}, the first SDE (for $(V_t)_{t\geq 0}$)
does not depend on $(\tf_t)_{t\geq 0}$.

\vip
(iv) The specific form of this coupling is important, rather than coupling processes using the 
same Brownian motion. The main idea is that we want $V_t$ and $\tV_t$ to be as close as possible.
Using the white noise in this way, we isolate the effect of a coupled pair 
$(\vs, \tvs)$,  with $\vs$ as close as possible to $\tvs$, in 
the background against our process $(V_s, \tV_s)$. It is also 
important that the white-noise covariance measure is $R_t(\dd \vs,\dd \tvs)\dd s$,
with $R_t$ the law of $(V_t,\tV_t)$. Replacing $R_t$, in the covariance measure  of the white noise,
with any other coupling (e.g. the optimal coupling for $\cT_p(f_t,\tf_t)$) \bla 
would not allow us
to use some symmetry arguments.

\vip
(v) We do not claim the uniqueness of solutions to \eqref{wec}; existence is 
sufficient for our needs.
\end{rk} 

\begin{proof}[Proof of Proposition \ref{coup}]
We sketch the proof, as the key points are standard for nonlinear diffusion equations and the 
Landau equation, see Gu\'erin \cite{g}. We fix $k\geq 1$ and define the truncated
{\it two level} coefficients 
$B_k:\rd\times\rd \rightarrow \rd\times\rd$ and $\Sigma_k: \rd\times\rd\rightarrow  {\mathcal M}_{6\times 3}(\rr)$ by 
$$ 
B_k\begin{pmatrix} x \\ \tx\end{pmatrix}=\begin{pmatrix} b_k(x) \\ b_k(\tx)\end{pmatrix};\qquad 
\Sigma_k \begin{pmatrix} x \\ \tx\end{pmatrix}=\begin{pmatrix} \sigma_k(x) \\ \sigma_k(\tx)\end{pmatrix}, 
$$ 
where $b_k(x)=-2(|x|\land k)^{\gamma} x$ and $\sigma_k(x)=(|x|\land k)^{\gamma/2} |x| \Pi_{x^\perp}$.
Proceeding as in \eqref{p2} and \eqref{p4}, 
one realises that $B_k$ and $\Sigma_k$ are globally Lipschitz continuous.

\vip

Now, let $W=(W^1, W^2, W^3)$ be a white noise on $[0,\infty)\times (0,1)$ with covariance measure
$\dd s\dd\alpha$. The usual arguments for 
nonlinear SDEs \cite{g} imply that there exists a process $X^k_t=(V^k_t, \tV^k_t)$ with initial distribution 
$X^k_0\sim R_0$, and a copy $Y^k_t$ defined on the probability space \bla
$((0,1), \mathcal{B}(0,1),d\alpha)$, 
with $\Law(X^k_t)=\Law(Y^k_t)$ and for all $t\geq 0$, \bla
\begin{equation*}
X^k_t=X^k_0+\int_0^t \int_{(0,1)} B_k(X^k_s-Y^k_s(\alpha))\hspace{0.1cm} \dd\alpha \dd s
+\int_0^t \int_{(0,1)} \Sigma_k(X^k_s-Y^k_s(\alpha)) W(\dd s,\dd \alpha).
\end{equation*} 
For $\psi \in C^2_p(\rd\times\rd)$,
applying It\^o's formula and taking expectations, we find
\begin{align*}
\E[\psi(X^k_t)]=&\E[\psi(X^k_0)]+ \intot\int_{(0,1)} \E[\nabla \psi(X^k_s)\cdot 
B_k(X^k_s-Y^k_s(\alpha))]  \dd\alpha \dd s\\
& +\frac12 \sum_{i,j=1}^6 \intot\int_{(0,1)} \E[\partial_{ij} \psi(X^k_s) 
[\Sigma_k(X^k_s-Y^k_s(\alpha)) \Sigma_k^*(X^k_s-Y^k_s(\alpha))]_{ij} ]\dd \alpha \dd s.
\end{align*}
Writing $R^k_t$ for the law of $X^k_t$ (and of $Y^k_t$), we thus get
\begin{align*}
\int_{\rd\times\rd}\!\! \psi(x) R_t^k(\dd x)=&\int_{\rd\times\rd} \!\!\psi(x) R_0(\dd x)+
\intot\!\! \intrdrd\! \intrdrd \!\!\nabla \psi(x)\cdot B_k(x-\xs) R_s^k(\dd x) R_s^k(\dd \xs)\dd s\\
&\!\!+\frac12 \sum_{i,j=1}^6 \intot\!\! \intrdrd\!\intrdrd \!\!\partial_{ij} \psi(x)
[\Sigma_k(x-\xs) \Sigma_k^*(x-\xs)]_{ij} ]R_s^k(\dd x) R_s^k(\dd \xs)\dd s.
\end{align*}
This precisely rewrites as 
\begin{align}\label{eq: approximate equations} 
\int_{\rd\times\rd} \psi(v,\tv) R_t^k(\dd v,\dd \tv)=&\int_{\rd\times\rd} \psi(v,\tv) R_0(\dd v,\dd \tv)\\
&+ \intot \intrdrd \intrdrd \cLL_k \psi(v,\tv,\vs,\tvs)
R_s^k(\dd v,\dd \tv) R_s^k(\dd \vs,\dd \tvs)\dd s,\notag
\end{align}
where $\cLL_k\psi$ is defined as $\cLL\psi$, replacing everywhere $b$, $\sigma$ and $a=\sigma\sigma^*$ 
by $b_k$, $\sigma_k$ and $a_k=\sigma_k\sigma^*_k$.

\vip

For $\psi(v,\tv)=\phi(v)+\phi(\tv)$, we have 
 $\cLL_k \psi(v,\tv,\vs,\tvs)=\cL_k\phi(v,\vs)+\cL_k\phi(\tv,\tvs)$,
where $\cL_k \phi$ is defined as $\cL\phi$, replacing $b$ and $a$ 
by $b_k$ and $a_k$. 
It is then straightforward to check that the approximate equation \eqref{eq: approximate equations} 
propagates \bla moments, uniformly in $k$, using arguments 
similar to those of \cite[Theorem 3]{dv1}
or Step 1 of the proof of Proposition \ref{expo}. 
In particular, under our initial Gaussian moment assumption,  
all moments of $R^k_t$ are bounded, uniformly $k\geq1$, locally uniformly in $t\geq 0$. 

\vip

It is then very classical to let $k\to \infty$ in \eqref{eq: approximate equations}, 
using a compactness argument, and to deduce
the existence of a family of probability measures $(R_t)_{t\geq 0}$ solving \eqref{wec} for all 
$\psi \in C^2_p(\rd\times\rd)$. See Section \ref{existence} 
for a similar procedure (with much less
moment estimates). \bla

\vip

Finally, we address the claim that $R_t$ is a coupling $R_t\in \cH(f_t, \tf_t)$. 
Let us write $g_t, \tilde{g}_t$ for the two marginals of $R_t$. For any $\varphi\in C^2_b(\rd)$,
we set $\psi(v,\tv)=\varphi(v)$ and observe that $\cLL\psi(v,\tv,\vs,\tvs)=\cL\varphi(v,\vs)$, so
that \eqref{wec} tells us that 
$$\intrd \varphi(v)g_t(\dd v)=\intrd \varphi(v)f_0(\dd v)
+\intot\intrd \cL\varphi(v,\vs) g_s(\dd\vs)g_s(\dd v)
\dd s.$$
\bla In other words, $(g_t)_{t\geq 0}$ is a weak solution to \eqref{LE}
which starts at $f_0$. Since $f_0$ is assumed to have a Gaussian moment, the uniqueness result 
Theorem \bla \ref{uniqueness} applies and so $(g_t)_{t\geq 0}=(f_t)_{t\geq 0}$ as desired. 
The argument that $(\tilde{g}_t)_{t\geq 0}=(\tf_t)_{t\geq 0}$ is identical.   
\end{proof}

We now carefully apply the coupling operator to our cost functions.

\begin{lem}\label{ito}
Adopt the notation of Proposition \ref{coup}
and fix $p\geq 2$ and $\e\in [0,1]$, and let $c_{p,\epsilon}$ be the transport cost defined in \eqref{cpe}. For $v,\vs,\tv,\tvs \in \rd$,
\begin{align*}
\cLL c_{p,\e}(v,\vs,\tv,\tvs) \leq& k_{p,\e}^{(1)}(v,\vs,\tv,\tvs)+k_{p,\e}^{(2)}(v,\vs,\tv,\tvs)
+k_{p,\e}^{(2)}(\tv,\tvs,v,\vs)\\
&+k_{p,\e}^{(3)}(v,\vs,\tv,\tvs)+k_{p,\e}^{(3)}(\tv,\tvs,v,\vs),
\end{align*}
where, setting $x=v-\vs$ and $\tx=\tv-\tvs$,
\begin{align*}
k_{p,\e}^{(1)}(v,\vs,\tv,\tvs)=& (1+|v|^p+|\tv|^p)\varphi_{\e}'(|v-\tv|^2)\Big[2(v-\tv)\cdot(b(x)-b(\tx))
+ ||\sigma(x)-\sigma(\tx)||^2   \Big],\\
k_{p,\e}^{(2)}(v,\vs,\tv,\tvs)=& \varphi_{\e}(|v-\tv|^2)\Big[p|v|^{p-2}v\cdot b(x)+\frac p2|v|^{p-2}||\sigma(x)||^2
+\frac{p(p-2)}2|v|^{p-4}|\sigma(x)v|^2\Big],
\\
k_{p,\e}^{(3)}(v,\vs,\tv,\tvs)=&2p |v|^{p-2}\varphi_{\e}'(|v-\tv|^2) [\sigma(x)v]\cdot[(\sigma(x)-\sigma(\tx))(v-\tv)].
\end{align*}
\end{lem}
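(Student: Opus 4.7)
The approach is a direct, if lengthy, application of the product rule, grouping terms in such a way that the three families $k_{p,\e}^{(1)}, k_{p,\e}^{(2)}, k_{p,\e}^{(3)}$ appear naturally and the only discarded term is manifestly nonpositive.

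First, I would write $c_{p,\e}(v,\tv) = A(v,\tv)\Phi(v,\tv)$ with $A(v,\tv)=1+|v|^p+|\tv|^p$ and $\Phi(v,\tv) = \varphi_\e(|v-\tv|^2)$, and list the first and second derivatives separately: for $A$ one has $\partial_{v_k}A = p|v|^{p-2}v_k$, $\partial^2_{v_kv_\ell}A = p|v|^{p-2}\delta_{k\ell}+p(p-2)|v|^{p-4}v_kv_\ell$, and $\partial^2_{v_k\tv_\ell}A=0$; for $\Phi$ one has $\partial_{v_k}\Phi = 2\varphi'_\e(v-\tv)_k = -\partial_{\tv_k}\Phi$, and the second derivatives involve $\varphi'_\e$ times $\delta_{k\ell}$ (with sign depending on which coordinate is differentiated) plus $\varphi''_\e$ times $(v-\tv)_k(v-\tv)_\ell$. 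Substituting the product rule $\partial c = (\partial A)\Phi+A(\partial \Phi)$ and the analogous second-order identity into the definition of $\cLL$ from Proposition \ref{coup} produces a finite sum of terms that I would organise by where the derivatives land.

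Next I would collect the terms according to which of $A$ and $\Phi$ is differentiated. The terms where \emph{all} derivatives hit $A$ use $\sum_{k,\ell}a_{k\ell}(x)\delta_{k\ell}=\|\sigma(x)\|^2$ and $\sum a_{k\ell}(x)v_kv_\ell = |\sigma(x)v|^2$ (and the analogue on the $\tv$ side) and immediately produce $\Phi$ times the quantity $\cL|\cdot|^p(v,\vs)$ computed in \eqref{tto}, plus its $v\leftrightarrow \tv$, $\vs\leftrightarrow\tvs$ mirror image: these are $k_{p,\e}^{(2)}(v,\vs,\tv,\tvs)$ and $k_{p,\e}^{(2)}(\tv,\tvs,v,\vs)$. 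The terms where all derivatives hit $\Phi$ contribute $A$ times $2\varphi'_\e(v-\tv)\cdot[b(x)-b(\tx)]$ from the drift, and for the second-order part one finds
\[
A\,\varphi'_\e\bigl[\|\sigma(x)\|^2+\|\sigma(\tx)\|^2-2\lps\sigma(x),\sigma(\tx)\rps\bigr]
+2A\,\varphi''_\e\,\bigl|[\sigma(x)-\sigma(\tx)](v-\tv)\bigr|^2
\]
where the first bracket is exactly $\|\sigma(x)-\sigma(\tx)\|^2$. Since $\varphi''_\e\le 0$ by \eqref{ve} and the remaining factor is a nonnegative square, this last term may be dropped, which is precisely the step that turns the equality into the inequality in the statement; what remains is $k_{p,\e}^{(1)}(v,\vs,\tv,\tvs)$.

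Finally, the mixed terms, where one derivative acts on $A$ and one on $\Phi$, come from the $\frac12 a_{k\ell}(x)\partial^2_{v_kv_\ell}c$ piece and its $\tx$ counterpart together with the genuine cross term $\sigma_{kj}(x)\sigma_{\ell j}(\tx)\partial^2_{v_k\tv_\ell}c$. Using $a=\sigma\sigma^*$ and the symmetry of $\sigma$, the $v$-side contributions combine as
\[
2p|v|^{p-2}\varphi'_\e\,[\sigma(x)v]\!\cdot\!\bigl[\sigma(x)(v-\tv)\bigr] - 2p|v|^{p-2}\varphi'_\e\,[\sigma(x)v]\!\cdot\!\bigl[\sigma(\tx)(v-\tv)\bigr]
=k_{p,\e}^{(3)}(v,\vs,\tv,\tvs),
\]
and the $\tv$-side contributions reorganise, using $\tv-v=-(v-\tv)$ and then reversing the role of $v,\tv$, into $k_{p,\e}^{(3)}(\tv,\tvs,v,\vs)$. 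Adding the three groups gives the announced bound.

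The calculation itself is elementary; the only real obstacle is bookkeeping and making sure that the pair of $\varphi''_\e$-terms coming from the two pure second-derivative blocks and the $\varphi''_\e$-term coming from the cross block combine into a perfect square $\bigl|[\sigma(x)-\sigma(\tx)](v-\tv)\bigr|^2$, so that its nonpositivity can be exploited cleanly. Once this algebraic cancellation is observed, the grouping into $k_{p,\e}^{(1)}, k_{p,\e}^{(2)}, k_{p,\e}^{(3)}$ is forced.
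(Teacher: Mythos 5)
Your proposal is correct and is essentially the same computation as the paper's proof: both expand $\cLL c_{p,\e}$ by the product rule on $c_{p,\e}=(1+|v|^p+|\tv|^p)\varphi_\e(|v-\tv|^2)$, collect the resulting terms into the three families $k^{(1)},k^{(2)},k^{(3)}$, and discard the single remaining contribution $2(1+|v|^p+|\tv|^p)\,\varphi''_\e(|v-\tv|^2)\,|(\sigma(x)-\sigma(\tx))(v-\tv)|^2$, which is nonpositive since $\varphi''_\e\le 0$. The only cosmetic difference is your bookkeeping (sorting by whether derivatives land on the polynomial factor, the $\varphi_\e$ factor, or both) versus the paper's labelling of the terms as $A_i,B_i,C_i,D_i$ by the source block of $\cLL$; the groupings and the key perfect-square cancellation in the $\varphi''_\e$ terms are identical.
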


\begin{proof} 
Fix $p\geq 2$, $\e\ge 0$ and let $\psi(v,\tv)=c_{p,\e}(v,\tv)=(1+|v|^p+|\tv|^p)\varphi_\e(|v-\tv|^2)$. 
We have
$$ \partial_{v_k}\psi(v,\tv)=p|v|^{p-2}v_k\varphi_\e(|v-\tv|^2)+2(v_k-\tv_k)(1+|v|^p+|\tv|^p)\varphi'_\e(|v-\tv|^2)$$ 
and a symmetric expression for $ \partial_{\tv_k}\psi(v,\tv)$.
Differentiating again, we find
\begin{align*}
\partial^2_{v_k v_\ell}\psi(v,\tv)=&p|v|^{p-2}\indiq_{\{k=\ell\}}\varphi_\e(|v-\tv|^2)+
 p(p-2)|v|^{p-4} v_kv_\ell\varphi_\e(|v-\tv|^2)  \\ 
&+ 2p|v|^{p-2}v_k(v_\ell-\tv_\ell)\varphi'_\e(|v-\tv|^2)+2\indiq_{\{k=\ell\}}(1+|v|^p+|\tv|^p)\varphi_\e'(|v-\tv|^2) \\ 
& + 4(v_k-\tv_k)(v_\ell-\tv_\ell)(1+|v|^p+|\tv|^p)\varphi''_\e(|v-\tv|^2) \\ 
& +2p|v|^{p-2}  (v_k-\tv_k)v_\ell   \varphi_\e'(|v-\tv|^2)
\end{align*} 
and a symmetric expression for $\partial^2_{\tv_k \tv_\ell}\psi(v,\tv)$. Concerning the cross terms,
\begin{align*} 
\partial^2_{v_k \tv_\ell}\psi(v,\tv)=&2p|v|^{p-2} v_k(\tv_\ell-v_\ell)\varphi'_\e(|v-\tv|^2)
+2p|\tv|^{p-2}  (v_k-\tv_k)\tv_\ell  \varphi'_\e(|v-\tv|^2)\\
&-4(v_k-\tv_k)(v_\ell-\tv_\ell)(1+|v|^p+|\tv|^p)\varphi_\e''(|v-\tv|^2) \\
& -2\indiq_{\{k=\ell\}}(1+|v|^p+|\tv|^p)\varphi'_\e(|v-\tv|^2).
\end{align*}

Let us now examine the sums in the definition of $\cLL \psi$ one by one. First,
\begin{align*}
&\sum_{k=1}^3  [b_k(v-\vs) \partial_{v_k}\psi(v,\tv) + b_k(\tv-\tvs) \partial_{\tv_k} \psi(v,\tv)] \\ 
=& p |v|^{p-2} v\cdot b(v-\vs)\varphi_\e(|v-\tv|^2)
& (=A_1) \\
&+ p |\tv|^{p-2} \tv\cdot b(\tv-\tvs)\varphi_\e(|v-\tv|^2)
& (=A_2) \\
& + 2(1+|v|^p+|\tv|^p)(v-\tv)\cdot (b(v-\vs)-b(\tv-\tvs))\varphi'_\e(|v-\tv|^2).& (=A_3)
\end{align*}
Next, using that for $x,y,z \in \rd$, $\Tr \; a(x)=||\sigma(x)||^2$ and 
$\sum_{k,\ell=1}^3 a_{k\ell}(x)y_kz_\ell= [\sigma(x)y]\cdot[\sigma(x)z]$,
\begin{align*}
\frac{1}{2}\sum_{k,\ell=1}^3 a_{k\ell}(v-\vs)&\partial^2_{v_kv_\ell}\psi(v,\tv) =
\frac{p}{2}|v|^{p-2}\|\sigma(v-\vs)\|^2\varphi_\e(|v-\tv|^2)  &(=B_1)\\
&+\frac{p(p-2)}{2}|v|^{p-4}|\sigma(v-\vs)v|^2\varphi_\e(|v-\tv|^2)  &(=B_2)\\ 
&+2p |v|^{p-2}[\sigma(v-\vs)v]\cdot[\sigma(v-\vs)(v-\tv)] \varphi'_\e(|v-\tv|^2) &(=B_3) \\ 
&+ (1+|v|^p+|\tv|^p)\|\sigma(v-\vs)\|^2\varphi'_\e(|v-\tv|^2) &(=B_4) \\ 
& +2 (1+|v|^p+|\tv|^p)|\sigma(v-\vs)(v-\tv)|^2\varphi''_\e(|v-\tv|^2). &(=B_5) 
\end{align*}
Similarly,
\begin{align*}
\frac{1}{2}\sum_{k,\ell=1}^3 a_{k\ell}(\tv-\tvs)&\partial^2_{\tv_k\tv_\ell}\psi(v,\tv) =  
\frac{p}{2}|\tv|^{p-2}\|\sigma(\tv-\tvs)\|^2\varphi_\e(|v-\tv|^2)&(=C_1)\\
&+\frac{p(p-2)}{2}|\tv|^{p-4}|\sigma(\tv-\tvs)\tv|^2\varphi_\e(|v-\tv|^2) &(=C_2)\\ 
&+2p|\tv|^{p-2}[\sigma(\tv-\tvs)\tv]\cdot[\sigma(\tv-\tvs)(\tv-v)]\varphi'_\e(|v-\tv|^2)&(=C_3)
 \\ 
&+ (1+|v|^p+|\tv|^p)\|\sigma(\tv-\tvs)\|^2\varphi'_\e(|v-\tv|^2)&(=C_4) \\ 
& +2 (1+|v|^p+|\tv|^p)|\sigma(\tv-\tvs)(\tv-v)|^2\varphi''_\e(|v-\tv|^2).&(=C_5) 
\end{align*}
Finally, we look at the cross-terms: 
\begin{align*}
&\sum_{j,k,\ell=1}^3 \sigma_{kj}(v-\vs)\sigma_{\ell j}(\tv-\tvs) \partial^2_{v_k\tv_\ell}\psi(v,\tv)&\\
=&  -2p |v|^{p-2}[\sigma(v-\vs)v]\cdot[\sigma(\tv-\tvs)(v-\tv)]\varphi'_\e(|v-\tv|^2) &(=D_1)\\ 
& +2p |\tv|^{p-2}[\sigma(v- \vs )(v-\tv)]\cdot[\sigma(\tv-\tvs)\tv]\varphi'_\e(|v-\tv|^2) &(=D_2)\\ 
&-4(1+|v|^p+|\tv|^p) [\sigma(v-\vs)(v-\tv)]\cdot [\sigma(\tv-\tvs)(v-\tv)]\varphi''_\e(|v-\tv|^2)&(=D_3)\\
& -2(1+|v|^p+|\tv|^p) \lps \sigma(v-\vs),\sigma(\tv-\tvs)\rps \varphi'_\e(|v-\tv|^2). &(=D_4)
\end{align*}
Recalling the notation $x=v-\vs$ and $\tx=\tv-\tvs$, we find that
\begin{align*}
A_3+B_4+C_4+D_4=&k_{p,\e}^{(1)}(v,\vs,\tv,\tvs),\\
A_1+B_1+B_2=&k_{p,\e}^{(2)}(v,\vs,\tv,\tvs),\\
A_2+C_1+C_2=&k_{p,\e}^{(2)}(\tv,\tvs,v,\vs),\\
B_3+D_1=&k_{p,\e}^{(3)}(v,\vs,\tv,\tvs),\\
C_3+D_2=&k_{p,\e}^{(3)}(\tv,\tvs,v,\vs),
\end{align*}
and finally that
\begin{align*}
B_5+C_5+D_3=&2 (1+|v|^p+|\tv|^p)|(\sigma(x)-\sigma(\tx))(v-\tv)|^2\varphi''_\e(|v-\tv|^2)\leq 0
\end{align*}
since $\varphi_\e''$ is nonpositive,  see \eqref{ve}. 
\end{proof}

We finally state the following central inequality.

\begin{lem}\label{cent}
 There is a constant $C$, depending only on $p\geq 2$ and $\gamma \in (0,1]$, 
such that for all $\e \in (0,1]$, all $v,\vs,\tv,\tvs \in \rd$,
\begin{align*}
\cLL c_{p,\e}(v,\vs,\tv,\tvs)
\leq& [2 - p] c_{p+\gamma,\e}(v,\tv)\\
&+ C\sqrt\e (1+|\vs|^p+|\tvs|^p) c_{p+\gamma,\e}(v,\tv)\\
&+ C\sqrt\e (1+|v|^p+|\tv|^p) c_{p+\gamma,\e}(\vs,\tvs)\\
&+ \frac{C}{\sqrt \e} (1+|\vs|^{p+\gamma}+|\tvs|^{p+\gamma})c_{p,\e}(v,\tv) \\
&+ \frac{C}{\sqrt \e} (1+|v|^{p+\gamma}+|\tv|^{p+\gamma})c_{p,\e}(\vs,\tvs).
\end{align*}
\end{lem}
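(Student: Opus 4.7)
My starting point is the decomposition supplied by Lemma \ref{ito}, namely $\cLL c_{p,\e}\leq k_{p,\e}^{(1)}+k_{p,\e}^{(2)}(v,\vs,\tv,\tvs)+k_{p,\e}^{(2)}(\tv,\tvs,v,\vs)+k_{p,\e}^{(3)}(v,\vs,\tv,\tvs)+k_{p,\e}^{(3)}(\tv,\tvs,v,\vs)$. By the manifest symmetry, it suffices to treat the two $(v,\vs,\tv,\tvs)$ summands; the other two account for the $(\tv,\tvs)$-flavoured contributions on the right. My plan is to extract the dominant negative term $-p\,c_{p+\gamma,\e}(v,\tv)$ from $k_{p,\e}^{(2)}$, a small positive correction from $k_{p,\e}^{(1)}$ producing the $+2$ in $[2-p]$, and to handle the cross term $k_{p,\e}^{(3)}$ through a Young inequality with weight $\sqrt{\e}$; this Young split is precisely the mechanism generating the $\sqrt{\e}$ and $1/\sqrt{\e}$ buckets on the right-hand side.

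The bracket inside $k_{p,\e}^{(2)}(v,\vs,\tv,\tvs)$ is exactly the quantity $\cL|v|^p(v,\vs)$ appearing in \eqref{tto}, and was already bounded in \eqref{tto2} in the proof of Proposition \ref{expo} by $-p|v|^{p+\gamma}+p|v|^p|\vs|^\gamma+Cp^2(|v|^{p-2+\gamma}|\vs|^2+|v|^{p-2}|\vs|^{2+\gamma})$. Multiplying by $\varphi_\e(|v-\tv|^2)$ and symmetrising yields the leading $-p(|v|^{p+\gamma}+|\tv|^{p+\gamma})\varphi_\e(|v-\tv|^2)$, while the remaining polynomial remainders are distributed, by Young's inequality in $(|v|,|\vs|)$, between $\sqrt{\e}(1+|\vs|^p+|\tvs|^p)c_{p+\gamma,\e}(v,\tv)$ and $\frac{1}{\sqrt\e}(1+|\vs|^{p+\gamma}+|\tvs|^{p+\gamma})c_{p,\e}(v,\tv)$. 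For $k_{p,\e}^{(1)}$, I would bound the bracket using \eqref{p2} and \eqref{p4} combined with $|x-\tx|\leq|v-\tv|+|\vs-\tvs|$; the portion containing $|v-\tv|^2\varphi'_\e(|v-\tv|^2)$ collapses to $\varphi_\e(|v-\tv|^2)$ by \eqref{ve}, producing a clean expression of order $(1+|v|^p+|\tv|^p)(|v|^\gamma+|\tv|^\gamma+|\vs|^\gamma+|\tvs|^\gamma)\varphi_\e(|v-\tv|^2)$. The $(|v|^\gamma+|\tv|^\gamma)$ piece supplies the advertised $+2\,c_{p+\gamma,\e}(v,\tv)$ correction, while the $(|\vs|^\gamma+|\tvs|^\gamma)$ piece and the $|\vs-\tvs|^2\varphi'_\e(|v-\tv|^2)$ remainder feed the mixed buckets.

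The crux is $k_{p,\e}^{(3)}$. I would begin with Cauchy-Schwarz, $|[\sigma(x)v]\cdot[(\sigma(x)-\sigma(\tx))(v-\tv)]|\leq|\sigma(x)v|\,\|\sigma(x)-\sigma(\tx)\|\,|v-\tv|$, and then combine $|\sigma(x)v|\leq C|x|^{\gamma/2}|v||\vs|$ from \eqref{tr}, the bound \eqref{p4} on $\|\sigma(x)-\sigma(\tx)\|$, and $|v-\tv|^2\varphi'_\e(|v-\tv|^2)\leq\varphi_\e(|v-\tv|^2)$. The pointwise upper bound then has the schematic form $C|v|^{p-1}|\vs|(|x|^{\gamma/2}+|\tx|^{\gamma/2})^{2}(1+|\vs-\tvs|/|v-\tv|)\varphi_\e(|v-\tv|^2)$, which I split via Young's inequality $ab\leq\frac{\sqrt\e}{2}a^2+\frac{1}{2\sqrt\e}b^2$ with the specific pairing that places the full $|v|^{p+\gamma}$-weight on the $\sqrt{\e}$ side and the $|\vs|^{p+\gamma}$-weight (together with only $|v|^p$) on the $1/\sqrt{\e}$ side. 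The main obstacle will be the meticulous algebraic bookkeeping required to simultaneously (a) extract the precise coefficient $2$ in front of $c_{p+\gamma,\e}(v,\tv)$ from $k_{p,\e}^{(1)}$ rather than a looser constant, and (b) arrange the Young-pairing in $k_{p,\e}^{(3)}$ so that \emph{every} $|v|^{p+\gamma}$ contribution is absorbed into the $\sqrt{\e}\,c_{p+\gamma,\e}$ buckets, with none leaking into the $1/\sqrt{\e}\,c_{p,\e}$ buckets; this constraint is exactly what accounts for the asymmetry between the two families of correction terms in the statement.
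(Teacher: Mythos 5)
Your overall scaffold—use Lemma \ref{ito}, pull the $-p$ from $k^{(2)}_{p,\e}$ via \eqref{tto}--\eqref{tto2}, and handle the cross terms in $k^{(3)}_{p,\e}$ by a Young split with weight $\sqrt{\e}$ to manufacture the $\sqrt{\e}$ and $1/\sqrt{\e}$ buckets—is faithful to the paper's strategy, and your treatment of $k^{(2)}_{p,\e}$ and $k^{(3)}_{p,\e}$ is essentially correct in spirit. The gap is in $k^{(1)}_{p,\e}$, and it is not a bookkeeping issue but a missing idea. You propose to estimate the bracket $2(v-\tv)\cdot(b(x)-b(\tx))+\|\sigma(x)-\sigma(\tx)\|^2$ by applying \eqref{p2} and \eqref{p4} separately and then using $|x-\tx|\le|v-\tv|+|\vs-\tvs|$. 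Carried out, this yields a leading coefficient of roughly $32$ in front of $c_{p+\gamma,\e}(v,\tv)$—the paper explicitly remarks that exactly this ``rather direct computation'' produces $[32-p]$ rather than $[2-p]$—so your route would only prove the lemma for $p>32$, not for all $p\ge 2$. You yourself flag ``extracting the precise coefficient $2$ \ldots rather than a looser constant'' as the main obstacle, but you supply no mechanism to close it.

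The mechanism the paper uses is a decomposition of the bracket into three pieces $g_1+g_2+g_3$ with $g_1=(x-\tx)\cdot(b(x)-b(\tx))+\|\sigma(x)-\sigma(\tx)\|^2$, $g_2=(v-\tv)\cdot(b(x)-b(\tx))$, $g_3=(\vs-\tvs)\cdot(b(x)-b(\tx))$. The key is that in $g_1$ the leading parts cancel \emph{algebraically}: expanding $(x-\tx)\cdot(-2|x|^\gamma x+2|\tx|^\gamma\tx)$ and the sharp form \eqref{p3} of $\|\sigma(x)-\sigma(\tx)\|^2$, the $\pm 2|x|^{\gamma+2}\pm 2|\tx|^{\gamma+2}$ contributions annihilate, leaving $g_1\le 2(x\cdot\tx)(|x|^{\gamma/2}-|\tx|^{\gamma/2})^2\le 2(|x|\wedge|\tx|)^\gamma|x-\tx|^2$. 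A further cancellation inside $g_2$ kills the $|v-\tv|^2$ contribution from that term entirely. These cancellations, together with the $\min$ $(|v|^\gamma+|\vs|^\gamma)\wedge(|\tv|^\gamma+|\tvs|^\gamma)$ used to split the weight $(1+|v|^p+|\tv|^p)$, are what reduce the coefficient to $2$. Estimating $b(x)-b(\tx)$ and $\|\sigma(x)-\sigma(\tx)\|^2$ separately and then triangle-inequality-ing $|x-\tx|$, as you propose, discards these cancellations at the outset. One smaller point: your requirement that \emph{no} $|v|^{p+\gamma}$ leak into the $1/\sqrt{\e}$ buckets is stronger than necessary—the lemma (and the Gr\"onwall step downstream) tolerates $\tfrac{C}{\sqrt{\e}}(1+|v|^{p+\gamma}+|\tv|^{p+\gamma})c_{p,\e}(\vs,\tvs)$; what must be avoided is $|v|^{p+\gamma}$ multiplying $c_{p,\e}(v,\tv)$ with a large coefficient.
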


Let us now highlight the main features of this bound, which motivate our strategy. 
The last two lines are amenable to a Gr\"onwall-type estimate, provided $\int_0^T m_{p+\gamma}(f_s+\tf_s)\dd s <\infty$, but this 
is prevented by the appearance of $c_{p+\gamma, \e}$ in the earlier terms; in
\cite{fgui}, analagous terms are handled using an exponential moment estimate. The key observation is that, 
by choosing $p>2$, the first line gives a negative multiple of this `bad' term, which can absorb the second 
and third lines if
$\epsilon>0$ is small enough (and if we know that $\sup_{[0,T]} m_{p}(f_s+\tf_s)<\infty$), 
allowing us to use a Gr\"onwall estimate. 

\vip

Let us mention that a rather direct computation, with $\e=0$, i.e. with the cost
$c_{p,0}(v,\tv)=(1+|v|^p+|\tv|^p)|v-\tv|^2$, relying on the simple estimates \eqref{p2}, \eqref{p4} and \eqref{tr},
shows that
\begin{align*}
\cLL c_{p,0}(v,\vs,\tv,\tvs)
\leq& [32 - p] c_{p+\gamma,0}(v,\tv)\\
&+C(1+|\vs|^{p+\gamma}+|\tvs|^{p+\gamma}) c_{p,0}(v,\tv)
+ C(1+|v|^{p+\gamma}+|\tv|^{p+\gamma}) c_{p,0}(\vs,\tvs).
\end{align*}
Choosing $p=32$, the first term is nonpositive, and this would lead to a 
stability result for the cost 
$\cT_{32,0}$, for initial conditions in $\cP_{34}(\rd)$, since $\cT_{32,0}$ requires some
moments of order $34$ to be well-defined. \bla

\vip

The proof of Lemma \ref{cent} is much more complicated; we have to be very careful and to use
many cancelations to replace $[32-p]$ by $[2-p]$. 
Moreover, we have to deal with $c_{p,\e}$ with $\e>0$ instead of 
$c_{p,0}$, because $\cT_{p,0}$ requires moments of order $p+2$ to be well-defined.
All this \bla is
crucial to obtain a stability result in  $\cP_{p}(\rd)$, for any $p>2$.
Since the proof is rather lengthy, it is deferred to Section \ref{proof of cent} 
for the ease of readability.

\section{Stability}\label{proof of main}

We now give the proof of our stability estimate. We first deal with the case when 
the initial data have a finite Gaussian moment, 
and then carefully relax this assumption.

\begin{lem}\label{mainexp} Fix $\gamma\in (0,1]$ and let $(f_t)_{t\ge 0}$, $(\tf_t)_{t\ge 0}$ be weak 
solutions to \eqref{LE} with initial moments 
$ \intrd e^{a|v|^2}(f_0+\tf_0)(\dd v)<\infty$ for some $a>0$. Then the stability estimate 
\eqref{eq: conclusion of main}  holds true.
\end{lem}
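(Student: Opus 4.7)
The plan is to invoke the coupling from Proposition \ref{coup}, apply the weak identity \eqref{wec} with $\psi = c_{p,\e}$ for a carefully chosen small $\e \in (0,1]$, derive a Gr\"onwall-type differential inequality via Lemma \ref{cent}, and finally convert the resulting bound on $\cT_{p,\e}$ back to one on $\cT_p = \cT_{p,1}$.

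First I would fix $R_0 \in \cH(f_0,\tf_0)$ to be an optimal coupling for $\cT_{p,\e}(f_0,\tf_0)$; the Gaussian moment hypothesis is precisely what Proposition \ref{coup} requires, so it produces a family $(R_t)_{t\ge 0}$, with $R_t \in \cH(f_t,\tf_t)$, satisfying \eqref{wec}. Note that for $p>2$ the function $c_{p,\e}$ lies in $C^2_p(\rd\times\rd)$ (the condition $p>2$ being exactly what makes the Hessian of $|v|^p$ continuous at the origin), and by Theorem \ref{ddv}(a) we have $\sup_{s\in[0,T]}m_p(f_s+\tf_s)<\infty$ and $\intot m_{p+\gamma}(f_s+\tf_s)\dd s<\infty$ for every $T>0$, so the application of \eqref{wec} to $\psi = c_{p,\e}$ is legitimate. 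Setting
$$u(s) = \intrdrd c_{p,\e}\,\dd R_s, \qquad U(s) = \intrdrd c_{p+\gamma,\e}\,\dd R_s,$$
integrating the pointwise bound of Lemma \ref{cent} against $R_s(\dd\vs,\dd\tvs)R_s(\dd v,\dd\tv)$ and using that the marginals of $R_s$ are $f_s$ and $\tf_s$ gives, with the shorthand $\alpha(s) = m_p(f_s+\tf_s)$ and $\beta(s) = m_{p+\gamma}(f_s+\tf_s)$,
$$u'(s) \le [2-p]\,U(s) + 2C\sqrt{\e}\,(1+\alpha(s))\,U(s) + \frac{2C}{\sqrt{\e}}(1+\beta(s))\,u(s).$$

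The key step is then to choose $\e$ small enough, depending on $\sup_{s\in[0,t]}\alpha(s)$, so that the first two terms combine to a nonpositive contribution on $[0,t]$. Taking
$$\sqrt\e = \min\Bigl\{1,\; \tfrac{p-2}{2C(1+\sup_{s\in[0,t]}\alpha(s))}\Bigr\},$$
discards the two $U$-terms entirely and leaves $u'(s)\le \tfrac{2C}{\sqrt\e}(1+\beta(s))u(s)$ on $[0,t]$, so that Gr\"onwall's inequality yields
$$\cT_{p,\e}(f_t,\tf_t) \le u(t) \le \cT_{p,\e}(f_0,\tf_0)\exp\Bigl(\tfrac{2C}{\sqrt\e}\intot(1+\beta(s))\dd s\Bigr).$$

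To finish I would convert back to $\cT_p$: for $\e \le 1$ one has $\varphi_1 \le \varphi_\e \le \e^{-1}\varphi_1$ (the lower bound giving $\cT_p(f_t,\tf_t)\le \cT_{p,\e}(f_t,\tf_t)$, the upper bound giving $\cT_{p,\e}(f_0,\tf_0)\le \e^{-1}\cT_p(f_0,\tf_0)$). With the above choice of $\e$, both $\e^{-1/2}$ and $\e^{-1}$ are bounded by polynomials in $1+\sup_{s\in[0,t]}\alpha(s)$, and since $\log(1+x)\le x$ the polynomial prefactor $\e^{-1}$ together with the Gr\"onwall constant $2C/\sqrt\e$ can both be absorbed into an exponential of the form $C'(1+\sup_{s\in[0,t]}\alpha(s))(1+\intot(1+\beta(s))\dd s)$, giving exactly \eqref{eq: conclusion of main}. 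The principal obstacle in this whole argument is establishing Lemma \ref{cent}: its delicate cancellations are precisely what make the coefficient of $U(s)$ in the differential inequality equal to the favourable $[2-p]$ and not something much larger, which is what allows the Gr\"onwall step to go through under only the moment assumption $f_0,\tf_0 \in \cP_p(\rd)$ with $p>2$.
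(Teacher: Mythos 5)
Your proposal is correct and follows essentially the same route as the paper: invoke the coupling from Proposition~\ref{coup} with $R_0$ optimal for $\cT_{p,\e}(f_0,\tf_0)$, integrate the pointwise bound of Lemma~\ref{cent} against $R_s\otimes R_s$ (using the symmetry in $(v,\tv)\leftrightarrow(\vs,\tvs)$ to merge the paired terms), choose $\e$ small in terms of $\sup_{[0,t]}m_p(f_s+\tf_s)$ so that the $c_{p+\gamma,\e}$ contributions become nonpositive, apply Gr\"onwall, and convert back via $c_{p,1}\le c_{p,\e}\le\e^{-1}c_{p,1}$. The only (cosmetic) quibble is the parenthetical remark that $p>2$ ``is exactly what makes the Hessian of $|v|^p$ continuous at the origin'' --- in fact $|v|^p$ is $C^2$ for all $p\ge 2$; the strict inequality $p>2$ is needed rather to make the Povzner coefficient $2-p$ strictly negative.
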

\begin{proof}
We fix $p>2$, consider $\e\in(0,1]$ to be chosen later and introduce 
$R_0\in\cH(f_0,\tf_0)$ such that
$$
\cT_{p,\e}(f_0,\tf_0)=\intrdrd c_{p,\e}(v,\tv) R_0(\dd v,\dd \tv).
$$
Note that $R_0$ depends on $\e$, but this is not an issue. We then introduce 
$(R_t)_{t\geq 0}$ as in Proposition \ref{coup}, which is licit thanks to our initial
Gaussian moment condition.
We know that for each $t\geq 0$, $R_t \in \cH(f_t,\tf_t)$,
from which we conclude that
\begin{equation}\label{ww}
u_\e(t)=\intrdrd c_{p,\e}(v,\tv) R_t(\dd v,\dd \tv) \geq
\cT_{p,\e}(f_t,\tf_t).
\end{equation}
By Proposition \ref{coup}, and since $u_\e(0)=\cT_{p,\e}(f_0,\tf_0)$, it holds that
 for all $t\geq 0$,
$$
u_\e(t)= \bla \cT_{p,\e}(f_0,\tf_0) + \intot \intrdrd\intrdrd \cLL c_{p,\e}(v,\vs,\tv,\tvs)
R_s(\dd \vs,\dd \tvs) R_s(\dd v,\dd \tv) \dd s.
$$
Using next Lemma \ref{cent} and a symmetry argument, we find that
$$
u_\e(t)\leq \cT_{p,\e}(f_0,\tf_0) + \intot (I_{1,\e}(s)+I_{2,\e}(s)+I_{3,\e}(s))\dd s,
$$
where, for some constant $C>0$ depending only on $p$ and $\gamma$,
\begin{align*}
I_{1,\e}(s)=& [2-p] \intrdrd c_{p+\gamma,\e}(v,\tv) R_s(\dd v,\dd \tv)
,\\
I_{2,\e}(s)=& C \sqrt \e \intrdrd\intrdrd (1+|\vs|^p+|\tvs|^p)c_{p+\gamma,\e}(v,\tv) 
R_s(\dd \vs,\dd \tvs) R_s(\dd v,\dd \tv),\\
I_{3,\e}(s)=& \frac{C}{\sqrt \e}\intrdrd\intrdrd
(1+|\vs|^{p+\gamma}+|\tvs|^{p+\gamma})c_{p,\e}(v,\tv) R_s(\dd \vs,\dd \tvs) R_s(\dd v,\dd \tv).
\end{align*}
Using that $R_s\in\cH(f_s,\tf_s)$, we conclude that
\begin{align*}
I_{2,\e}(s)\leq & C \sqrt \e (1+m_p(f_s+\tf_s))\intrdrd c_{p+\gamma,\e}(v,\tv)R_s(\dd v,\dd \tv),\\
I_{3,\e}(s)\leq & \frac C {\sqrt \e} (1+m_{p+\gamma}(f_s+\tf_s)) u_\e(s).
\end{align*}

We now fix $t>0$ and work on $[0,t]$. Setting $m_{p,\infty}([0,t])=
\sup_{s\in [0,t]} m_{p}(f_s+\tf_s)$ and choosing
$$
\e= \Big[ \frac{p-2}{p-2+C(1+m_{p,\infty}([0,t]))}\Big]^2,
$$
so that $\e \in (0,1]$ and
$2-p+C \sqrt \e (1+m_p(f_s+\tf_s)) \leq 0$ for all $s\in [0,t]$, we conclude
that $I_{1,\e}(s)+I_{2,\e}(s) \leq 0$ for all $s\in [0,t]$, whence
$$
u_\e(r)\leq \cT_{p,\e}(f_0,\tf_0) + \frac C {\sqrt \e}\int_0^r (1+m_{p+\gamma}(f_s+\tf_s))
u_\e(s) \dd s
$$
for all $r \in [0,t]$. The Gr\"onwall \bla lemma then tells us that
$$
\cT_{p,\e}(f_t,\tf_t)\leq  u_\e(t)\leq\cT_{p,\e}(f_0,\tf_0)
\exp \Big( \frac C {\sqrt \e}\intot (1+m_{p+\gamma}(f_s+\tf_s))\dd s \Big).
$$
Using finally that $\cT_p=\cT_{p,1}$ and that
$c_{p,1} \leq c_{p,\e} \leq \e^{-1} c_{p,1}$, we deduce that
$$
 \cT_p(f_t,\tf_t)\leq\cT_{p,\e}(f_t,\tf_t) \quad \hbox{and}\quad  
\cT_{p,\e}(f_0,\tf_0) \leq \frac 1\e \cT_{p}(f_0,\tf_0). 
$$
We thus end with
$$
\cT_{p}(f_t,\tf_t)\leq  \frac 1 \e \cT_{p}(f_0,\tf_0)
\exp \Big( \frac C {\sqrt \e}\intot (1+m_{p+\gamma}(f_s+\tf_s))\dd s \Big).
$$
Recalling our choice for $\e$ and allowing the value of $C$, still depending only
on $p$ and $\gamma$, to change from line to line, we find that
\begin{align*}
\cT_{p}(f_t,\tf_t)\leq&  C(1+m_{p,\infty}([0,t]))^2 \cT_{p}(f_0,\tf_0)
\exp \Big( C[1+m_{p,\infty}([0,t])]\intot (1+m_{p+\gamma}(f_s+\tf_s))\dd s \Big)\\
\leq&\cT_{p}(f_0,\tf_0)
\exp \Big( C[1+m_{p,\infty}([0,t])]\Big[1+\intot (1+m_{p+\gamma}(f_s+\tf_s))\dd s\Big] \Big),
\end{align*}
which was our goal.
\end{proof}

In order to relax the initial Gaussian moment condition, we 
will use the following convergence. 

\begin{lem}\label{tp convergence} 
Fix $\gamma \in (0,1]$ and $p>2$.
Let $(f_t)_{t\ge 0}$ be a weak solution to \eqref{LE}, with initial moment $m_p(f_0)<\infty$. 
Then $\cT_p(f_t, f_0)\rightarrow 0$ as $t\rightarrow 0$.
\end{lem}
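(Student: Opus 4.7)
The plan is to reduce the statement to two classical properties and invoke the topological equivalence recorded in the notation subsection: convergence in $\cT_p$ is equivalent to weak convergence of probability measures together with convergence of the $p^{\text{th}}$ moment. So I would need to establish (i) $f_t \to f_0$ weakly and (ii) $m_p(f_t) \to m_p(f_0)$, both as $t \downarrow 0$.

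First I would establish weak convergence. For any $\varphi \in C^2_b(\rd)$, the weak formulation \eqref{wf} together with the already-noted bound $|\cL\varphi(v,\vs)| \le C_\varphi(1+|v|+|\vs|)^{2+\gamma}$ yields
\[
\Big|\intrd \varphi(v)(f_t-f_0)(\dd v)\Big| \le C'_\varphi \intot \bigl(1+m_{2+\gamma}(f_s)\bigr)\dd s,
\]
which tends to $0$ as $t \downarrow 0$ because $\int_0^T m_{2+\gamma}(f_s)\dd s <\infty$ by the definition of a weak solution. Since $C^2_b(\rd)$ is convergence-determining on $\cP(\rd)$, this gives $f_t \to f_0$ weakly.

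Next I would handle the convergence $m_p(f_t) \to m_p(f_0)$. Given $\varepsilon > 0$, Lemma \ref{ui} furnishes some $M$ such that $\intrd (1+|v|^p)\indiq_{\{|v|>M\}} f_t(\dd v) < \varepsilon$ for all $t$ sufficiently close to $0$; enlarging $M$ if necessary, the same tail bound holds for $f_0$. Choosing a smooth cutoff $\chi_M$ with $\indiq_{\{|v|\le M\}}\le \chi_M\le \indiq_{\{|v|\le 2M\}}$ produces the bounded continuous test function $(1+|v|^p)\chi_M(v)$, whose integral against $f_t$ converges to its integral against $f_0$ by the weak convergence from step (i). Combining this with the tail bounds by a standard $\varepsilon/3$-argument yields $|m_p(f_t) - m_p(f_0)| \le C\varepsilon$ for all small $t$, whence $m_p(f_t) \to m_p(f_0)$.

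Combining (i) and (ii), $\cW_p(f_t,f_0) \to 0$ by the standard characterization of convergence in the Wasserstein space, and consequently $\cT_p(f_t,f_0) \to 0$ as noted in the notation subsection. The essential technical input here is Lemma \ref{ui}: without that uniform integrability statement, moment convergence would not be automatic from the weak formulation alone, since $m_p$ is only lower semicontinuous under weak convergence. This is the delicate step, but it is already in hand from the previous section.
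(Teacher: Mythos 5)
Your proof is correct and hinges on the same essential ingredient as the paper's---namely the uniform integrability from Lemma~\ref{ui}---but it routes the final step differently, and that difference has a subtle circularity worth flagging. The paper does not pass through the characterization ``$\cT_p \to 0$ iff weak convergence plus $p$-th moment convergence''; instead, it works directly with the cost: it picks the coupling $S_t$ optimal for a \emph{bounded} metric $d$ metrizing weak convergence, splits the cost $c_{p,1}(v,w)\le (1+|v|^p)(|v-w|\wedge 1)+(1+|w|^p)(|v-w|\wedge 1)$, and then splits each piece at $|v|\le M$ versus $|v|>M$ (resp.\ $w$), obtaining $\cT_p(f_t,f_0)\le 2(1+M^p)d(f_t,f_0)+2\e$ after invoking Lemma~\ref{ui}. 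By contrast, you establish weak convergence and $m_p(f_t)\to m_p(f_0)$ (both correctly, and via the same Lemma~\ref{ui}) and then appeal to the remark in the notation subsection that $\cW_p\to 0$ implies $\cT_p\to 0$, hence that $\cT_p$-convergence is equivalent to weak plus moment convergence. The issue is that this remark is only \emph{asserted} there (``it can also be checked that\ldots'') and is never proved in the paper; in fact, the implication ``weak convergence $+$ convergence of $p$-th moments $\Rightarrow$ $\cT_p\to 0$'' requires exactly the kind of coupling estimate and tail-truncation argument that constitutes the paper's proof of this very lemma. So while your logic is sound if one grants the notation-section claim, your proof is not self-contained relative to the paper: the missing implication would need essentially the direct bound the paper carries out. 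If you wish to make your version rigorous without circularity, you should supply that implication explicitly, at which point you would recover the paper's computation with the coupling $S_t$ optimal for $d(f_t,f_0)$.
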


\begin{proof} First, thanks to the density of $C^2_b(\rd)$  in $C_b(\rd)$, 
we deduce  from \eqref{wf} that $f_t\rightarrow f_0$ weakly. 
It classically follows that $\lim_{t\to 0}d(f_t,f_0)=0$, where $d$ is the following
distance that classicaly metrises weak convergence on probability measures:
\begin{equation*}
d(f,g)
 =\inf\Big\{\intrdrd (1\land |v-w|) S(\dd v,\dd w): S\in \cH(f, g) \Big\}. 
\end{equation*} 
Moreover, for each $t\geq 0$, there exists a coupling $S_t \in \cH(f_t,f_0)$ attaing the minimum
$d(f_t,f_0)= \intrdrd (1\land |v-w|) S_t(\dd v,\dd w)$. \bla
Now, fix $\epsilon>0$; by Lemma \ref{ui}, there exist $M<\infty$ and $t_0>0$ such that
$$
\intrd (1+|v|^p)\indiq_{\{|v|>M\}} f_t(\dd v)<\epsilon \quad \hbox{for all $t\in [0,t_0]$}.
$$
Since now $c_{p,1}(v,w) \leq (1+|v|^p+|w|^p)(|v-w|\land 1)
\leq (1+|v|^p)(|v-w|\land 1)+ (1+|w|^p)(|v-w|\land 1)$ and since 
$\cT_p=\cT_{p,1}$, we have
\begin{align*}
\cT_p(f_t, f_0) \leq & \intrdrd c_{p,1}(v,w)S_t(\dd v,\dd w) \\
\leq & (1+M^p) d(f_t,f_0)+ \intrdrd (1+|v|^p)\indiq_{\{|v|>M\}}S_t(\dd v,\dd w)\\
&+(1+M^p) d(f_t,f_0)+ \intrdrd (1+|w|^p)\indiq_{\{|w|>M\}}S_t(\dd v,\dd w)\\
=& 2(1+M^p)d(f_t,f_0)+ \intrd (1+|v|^p)\indiq_{\{|v|>M\}} f_t(\dd v)+\intrd (1+|w|^p)\indiq_{\{|w|>M\}} f_0(\dd w),
\end{align*}
the last equality using that $S_t \in \cH(f_t,f_0)$. We conclude that for all $t\in [0,t_0]$,
$$
\cT_p(f_t, f_0) \leq 2(1+M^p)d(f_t,f_0)+ 2\e,
$$
whence $\limsup_{t\to 0} \cT_p(f_t, f_0) \leq 2\e$ and we are done, as $\epsilon>0$ was arbitrary.
\end{proof}

We are now ready to remove the additional assumptions and prove the full stability statement.

\begin{proof}[Proof of Theorem \ref{main}] 
We fix $\gamma \in (0,1]$, $p>2\bla$ and we consider two weak solutions $(f_t)_{t\geq 0}$
and $(\tf_t)_{t\geq 0}$ to \eqref{LE} such that $m_p(f_0+\tf_0)<\infty$.

\vip

Fix $t>0$ and let $0<s\le t$; thanks to Proposition \ref{expo}, we have 
$\intrd e^{a|v|^2}(f_s+\tf_s)(\dd v)<\infty$ for some $a>0$. Lemma \ref{mainexp}
therefore applies to $(f_u)_{u\ge s}, (\tf_u)_{u\ge s}$, so that,
setting $m_{p,\infty}([s,t])=\sup_{r\in [s,t]}m_p(f_r+\tf_r)$,
\begin{align}\label{conclustion st}
\cT_p(f_t, \tf_t)\le &  
\cT_p(f_s,\tf_s)\exp \Big( C[1+m_{p,\infty}([s,t])]
\Big[1+\int_s^t (1+m_{p+\gamma}(f_u+\tf_u))\dd u\Big] \Big)\\
\leq &\cT_p(f_s,\tf_s)\exp \Big( C[1+m_{p,\infty}([0,t])]
\Big[1+\int_0^t (1+m_{p+\gamma}(f_u+\tf_u))\dd u\Big] \Big). \notag
\end{align} 
Recalling the relaxed triangle inequality \eqref{eq: rti2}, we have, for some constant 
$C$ depending only on $p$, 
$$\cT_p(f_s, \tf_s)\le C[\cT_p(f_s, f_0)+\cT_p(f_0, \tf_0)+\cT_p(\tf_0, \tf_s)] $$ 
and as $s\rightarrow 0$, the first and third terms converge to $0$ by Lemma \ref{tp convergence}, 
so 
$$ \limsup_{s\rightarrow 0} \cT_p(f_s, \tf_s)\le C\cT_p(f_0, \tf_0).$$ 
We thus can take $s\downarrow 0$ in \eqref{conclustion st} to obtain the desired result. 
\end{proof}

\section{Existence}\label{existence} 

\begin{proof}[Proof of Theorem \ref{mainexist}] 
Let us start from $f_0\in \cP_2$. By the de La Vall\'ee Poussin theorem, there exists a $C^2$-function 
$h:[0,\infty)\rightarrow [0,\infty)$ such that $h'' \geq 0$, $h'(\infty)=\infty$ and
\begin{equation} \label{eq: h integrable} 
\intrd h(|v|^2) f_0(\dd v)<\infty .
\end{equation}
We can also impose that $h''\le 1$ and that $h'(0)=1$. 

\vip

{\bf Step 1.}
We consider $n_0\geq 1$ such that for all $n\geq n_0$, $\alpha_n=
\intrd \indiq_{\{|v|\le n\}}f_0(\dd v) \geq 1/2$ and set, for $n\geq n_0$,
$$ f^n_0(\dd v)=\alpha_n^{-1}\indiq_{\{|v|\le n\}}f_0(\dd v) \in \cP(\rd).$$
Since $f^n_0$ is compactly supported, it has all
moments finite and there exists a weak solution $(f^n_t)_{t\geq 0}$ to \eqref{LE} starting at
$f^n_0$ by Theorem \ref{ddv}.
Of course, $f^n_0$ converges weakly to $f_0$ as $n\to \infty$.

{\bf Step 2.} We now show that for all $T>0$, there is a finite constant $K_T$ such that for all
$n\geq n_0$,
\begin{equation} \label{eq: UI in existence proof}
\sup_{t\in [0,T]} \intrd h(|v|^2)f^n_t (\dd v) + \int_0^T \intrd |v|^{2+\gamma}h'(|v|^2)f^n_t (\dd v) \dd t \leq K_T.\end{equation}

By Theorem \ref{ddv}, all polynomial moments of $f^n_t$ are bounded, uniformly in $t\geq 0$ 
(but not necessarily in 
$n$). We can therefore apply \eqref{wf} to the function $\varphi(v)=h(|v|^2)$: 
arguing as in \eqref{tto}, 
$$ \partial_k\varphi(v)=2v_kh'(|v|^2); \qquad \partial^2_{k\ell}\varphi(|v|^2)
=2h'(|v|^2)\indiq_{\{k=\ell\}}+4v_kv_lh''(|v|^2)$$  
and so, setting $x=v-\vs$ as usual,
$$ \cL \varphi(v,\vs)=h'(|v|^2)[2v\cdot b(x)+\|\sigma(x)\|^2]+2|\sigma(x)v|^2h''(|v|^2).  $$ 
Recalling \eqref{tr} and that $0\leq h''\leq 1$, \bla the last term is bounded by 
$$ 2|\sigma(x)v|^2h''(|v|^2) \le C |x|^\gamma |v|^2|\vs|^2 \bla
\le C(|v|^{2+\gamma}|\vs|^2+ |v|^2|\vs|^{2+\gamma}). $$ 
Meanwhile, since $b(x)=-2|x|^\gamma x$ and $||\sigma(x)||^2=2 |x|^{\gamma+2}$,
the first term is 
\begin{align*} h'(|v|^2)&[2v\cdot b(x)+\|\sigma(x)\|^2] 
 =2h'(|v|^2)[-|x|^\gamma|v|^2+|x|^\gamma|\vs|^2] \\
&\le -2h'(|v|^2)|v|^{2+\gamma} +2h'(|v|^2)|\vs|^\gamma|v|^2
+2h'(|v|^2)|v|^\gamma|\vs|^2 + 2h'(|v|^2)|\vs|^{2+\gamma}\\
&\le -h'(|v|^2)|v|^{2+\gamma} +C (1+|v|^2) |\vs|^{\gamma+2}.
\end{align*}
We used that $|x|^\gamma \geq |v|^\gamma -|\vs|^\gamma$, that $|x|^\gamma \leq |v|^\gamma -|\vs|^\gamma$ and, 
for the last inequality, that there is $C>0$ such that
$|\vs|^\gamma|v|^2+|v|^\gamma|\vs|^2 \leq \frac12|v|^{2+\gamma}+C|\vs|^{2+\gamma}$
and that $h'(r)\leq 1+r$. All in all,
$$
\cL \varphi(v,\vs) \leq -h'(|v|^2)|v|^{2+\gamma} +C (1+|v|^2) |\vs|^{\gamma+2}+C(1+|\vs|^2) |v|^{\gamma+2}.
$$
We thus find, by \eqref{wf}, recalling that $m_2(f^n_t)=m_2(f^n_0)$, that
\begin{align*}
&\intrd h(|v|^2)f^n_t(\dd v) + \intot \intrd h'(|v|^2)|v|^{2+\gamma} f^n_s(\dd v) \dd s\\
\leq&\intrd h(|v|^2)f^n_0(\dd v)
+2C(1+m_2(f_0^n))\intot \intrd |v|^{2+\gamma} f^n_s(\dd v) \dd s\\
\leq & 2\intrd h(|v|^2)f_0(\dd v)
+2C(1+2m_2(f_0))\intot \intrd |v|^{2+\gamma} f^n_s(\dd v) \dd s,
\end{align*}
since $f^n_0 \leq 2f_0$. But since $h'(\infty)=\infty$, there is a constant $\kappa$ (depending on $m_2(f_0)$)
such that $2C(1+2m_2(f_0)) |v|^{2+\gamma} \leq \frac 12 h'(|v|^2)|v|^{2+\gamma} + \kappa$ for all $v\in \rd$.
We finally get
$$
\intrd h(|v|^2)f^n_t(\dd v)+ \frac12 \intot \intrd h'(|v|^2)|v|^{2+\gamma} f^n_s(\dd v) \dd s
\leq 2\intrd h(|v|^2)f_0(\dd v) + \kappa t,
$$
and this completes the step.

\vip

{\bf Step 3.} Here we show that the family $((f^n_t)_{t\geq 0})_{n\geq n_0}$ is relatively compact 
in $C([0,\infty),\cP(\rd))$, where $\cP(\rd)$ is endowed with the usual weak convergence.
This last convergence can be metrised by the distance on $\cP(\rd)$:
$$
\delta(f,g)=\sup_{\varphi \in C^2_{b,1}}\Big|\intrd \varphi(v)(f-g)(\dd v)\Big|,
$$
where $C^2_{b,1}$ is the set of $C^2$ functions on $\rd$ such that $||\varphi||_\infty+||\nabla \varphi||_\infty
+||\nabla^2 \varphi||_\infty \leq 1$.
By the Arzel\`a-Ascoli theorem, it suffices to check that 

\vip

\noindent (a) for all $t\geq 0$, the family
$(f^n_t)_{n\geq n_0}$ is relatively compact in $\cP(\rd)$ and 

\vip
\noindent (b) for all $T>0$, 
$\lim_{\e\to 0} \sup_{n\geq n_0} \sup_{s,t \in [0,T], |t-s|\leq \e} \delta(f^n_t,f^n_s) = 0$.

\vip

Point (a) is obvious, since for all $t\geq 0$, all $n\geq n_0$, $m_2(f^n_t)\leq 2 m_2(f_0)$
and since the set $\{f \in \cP(\rd) : m_2(f) \leq a\}$ is compact for any $a>0$.
Concerning point (b), we recall that there is a constant $C$ such that for all $\varphi \in C^2_{b,1}$,
$|\cL\varphi(v,\vs)|\leq C(1+|v|^{\gamma+2}+|\vs|^{\gamma+2})$. We thus deduce from \eqref{wf} 
that for all $t\geq s \geq 0$, all
$n\geq n_0$,
$$
\delta(f^n_t,f^n_s) \leq C \int_s^t \intrdrd (1+|v|^{\gamma+2}+|\vs|^{\gamma+2}) f^n_s(\dd \vs)f^n_s(\dd v)\dd s
\leq 2C \int_s^t \intrd (1+|v|^{\gamma+2})f^n_s(\dd v)\dd s.
$$
Now for $0\leq s \leq t \leq T$ with $t-s\leq \e$, for any $n\geq n_0$, any $A>0$, 
separating the cases $|v|\leq A$ and $|v|\geq A$, 
\begin{align*}
\delta(f^n_t,f^n_s) \leq& 2C (1+A^{\gamma+2})(t-s) + \frac{2C}{h'(A^2)} 
\int_s^t \intrd (1+|v|^{\gamma+2})h'(|v|^2)
f^n_s(\dd v)\dd s \\
\leq& 2C (1+A^{\gamma+2}) \e + \frac{2CK_T}{h'(A^2)}
\end{align*}
because $h'$ is nondecreasing and with $K_T$ introduced in Step 2. 
\bla Now for $\eta>0$ fixed,
we choose $A_\eta>0$ large enough so that $\frac{2CK_T}{h'(A_\eta^2)} \leq \frac \eta 2$ and
conclude that, as soon as $\e \leq  \frac{\eta}{4C (1+A_\eta^{\gamma+2})}$, \bla we have
$\delta(f^n_t,f^n_s) \leq \eta$
for all $n\geq n_0$ and all $s,t \in [0,T]$ such that $|t-s|\leq \e$.

\vip

{\bf Step 4.} By Step 3, we can find a (not relabelled) subsequence such that $(f^n_t)_{t\geq 0}$
converges to a limit $(f_t)_{t\geq 0}$ in $C([0,\infty),\cP(\rd))$; this also implies that  $(f^n_t\otimes f^n_t)_{t\geq 0}$ tends to $(f_t\otimes f_t)_{t\geq 0}$.
Hence for all $T>0$,
all $\psi \in C^2_b(\rd)$ and all $\Psi \in C^2_b(\rd\times\rd)$,
\begin{equation}\label{ttty}
\sup_{[0,T]} \Big[ \Big|\intrd \psi(v) (f^n_t(\dd v)-f_t(\dd v))\Big|+
\Big|\intrdrd \Psi(v,\vs) (f^n_t(\dd v)f^n_t(\dd \vs)-
f_t(\dd v)f_t(\dd \vs))\Big| \to 0 
\end{equation}
as $n\to \infty$. It remains to check that this limit is indeed a weak solution to \eqref{LE} 
starting from $f_0$.

\vip
First, using the uniform integrability property \eqref{eq: UI in existence proof},
$$
\sup_{n\geq n_0}\sup_{t\in [0,T]} \intrd h(|v|^2)f^n_t (\dd v) <\infty
$$
and recalling that $\lim_{r\to \infty} r^{-1}h(r) = \infty$, one easily check that 
for all $t\geq 0$, $m_2(f_t)=\lim_n m_2(f^n_t)$. Since now $m_2(f^n_t)=m_2(f^n_0)\to m_2(f_0)$,
we deduce that $(f_t)_{t\geq 0}$ is energy-conserving as desired.

\vip

Next, we fix $\varphi \in C^2_b(\rd)$ and recall that $\cL \varphi$ is continuous on
$\rd\times\rd$ and satisfies the growth bound $|\cL\varphi(v,\vs)| \leq C(1+|v|^{2+\gamma}+|\vs|^{2+\gamma})$.
We can then let $n\to \infty$ in the formula 
$$
\intrd \varphi(v)f_t^n(\dd v) = \intrd \varphi(v)f_0^n(\dd v) + \intot \intrd \intrd \cL\varphi(v,\vs) 
f_s^n(\dd \vs)f_s^n(\dd v) \dd s,
$$
and conclude that \eqref{wf} is satisfied,
using \eqref{ttty} and the uniform integrability given by \eqref{eq: UI in existence proof} 
(recall that $\lim_{r\to \infty} h'(r)=\infty$), i.e.
$$ 
\sup_{n\geq n_0}\int_0^t \intrd |v|^{2+\gamma}h'(|v|^2)f^n_s (\dd v) \dd s <\infty.
$$
\bla The proof is complete.
\end{proof}

\section{Regularity}\label{pf of regularity}  
We now prove our regularity result Theorem \ref{mainregularity}. We begin with the following very 
mild regularity principle, which guarantees that the hypotheses of Theorem \ref{ddv}-(c) apply at 
some small time, provided that $f_0$ has $4$ moments. We then `bootstrap' to the claimed result, 
using Theorems \ref{ddv} and \ref{analytic regularity} \bla and our uniqueness result. 

\begin{lem}\label{weak regularity} Let $\gamma \in (0,1]$ and \bla $f_0\in \cP_{4}(\rd)$ 
be a measure which is not a Dirac mass, 
and let $(f_t)_{t\ge 0}$ be the weak solution to \eqref{LE} starting at $f_0$. 
Then, for any $t_0>0$, 
there exists $t_1\in [0,t_0)$ such that $f_{t_1}$ is not concentrated on a line. 
\end{lem}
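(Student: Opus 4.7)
The plan is to argue by contradiction: suppose there is some $t_0>0$ such that $f_t$ is concentrated on a line $L_t$ for every $t\in[0,t_0)$, and derive that $f_0$ must then have been a Dirac mass.

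First I would use conservation of momentum and energy. Since $f_0 \in \cP_4(\rd)$, Theorem \ref{ddv}-(a) gives $\sup_{s\geq 0} m_4(f_s)<\infty$ and local integrability of $m_{4+\gamma}(f_s)$. A standard truncation argument lets us apply the weak formulation \eqref{wf} to the (unbounded) test functions $\varphi(v)=v_iv_j$ to show that the mean $\bar v = \intrd v f_t(\dd v)$ is constant, and to compute the time evolution of the covariance $\Sigma_t = \intrd (v-\bar v)(v-\bar v)^\ast f_t(\dd v)$. Writing $x=v-\vs$, a direct calculation using $a(x)=|x|^{\gamma+2}I-|x|^\gamma xx^\ast$ and $b(x)=-2|x|^\gamma x$, together with the symmetry trick of swapping $v\leftrightarrow \vs$, yields
\begin{equation*}
\frac{d}{dt}\Sigma_t=\intrdrd\bigl[|x|^{\gamma+2}I-3|x|^\gamma xx^\ast\bigr]f_t(\dd v)f_t(\dd \vs).
\end{equation*}
Taking the trace recovers $\tfrac{d}{dt}m_2(f_t)=0$ as a sanity check.

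Now comes the key step. Since $f_t$ is supported on $L_t$, which must pass through $\bar v$, we have $L_t=\bar v+\rr u_t$ for some unit vector $u_t$, and $\Sigma_t = c_t u_tu_t^\ast$ with $c_t\ge 0$. Any $x=v-\vs$ with $v,\vs\in\supp f_t$ is proportional to $u_t$, so $|x|^\gamma xx^\ast=|x|^{\gamma+2}u_tu_t^\ast$, and therefore
\begin{equation*}
\frac{d}{dt}\Sigma_t=D_t(I-3u_tu_t^\ast),\qquad D_t:=\intrdrd |v-\vs|^{\gamma+2}f_t(\dd v)f_t(\dd\vs)\ge 0.
\end{equation*}
Both $\Tr\Sigma_t=m_2(f_t)-|\bar v|^2$ and, by virtue of rank $\le 1$, $\Tr(\Sigma_t^2)=(\Tr\Sigma_t)^2$ must be constant in $t$. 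But differentiating the second quantity gives
\begin{equation*}
0=\frac{d}{dt}\Tr(\Sigma_t^2)=2\Tr(\Sigma_t\Sigma_t')=2D_t\Tr\bigl[c_tu_tu_t^\ast(I-3u_tu_t^\ast)\bigr]=-4c_tD_t.
\end{equation*}
Thus $c_tD_t=0$ throughout $[0,t_0)$. If $c_0=0$ then $\Sigma_0=0$, i.e.\ $f_0=\delta_{\bar v}$, contradicting the hypothesis. Otherwise $c_t\equiv c_0>0$, which forces $D_t\equiv 0$; but $D_t=0$ means $f_t\otimes f_t$ is supported on $\{v=\vs\}$, so $f_t$ is itself a Dirac, again giving $c_t=0$, a contradiction.

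The main obstacle I anticipate is justifying the differential identity for $\Sigma_t$ rigorously given that $\varphi(v)=v_iv_j$ is not in $C_b^2(\rd)$; this is routine but must be done by approximating with bounded cutoffs $\chi_M$ as in the proof of Lemma \ref{ui}, and then passing to the limit using the uniform bounds $\sup_{s\le T}m_4(f_s)<\infty$ and $\int_0^T m_{4+\gamma}(f_s)\dd s<\infty$ from Theorem \ref{ddv}-(a), which is precisely why the hypothesis $f_0\in\cP_4(\rd)$ is imposed. Everything else is algebraic.
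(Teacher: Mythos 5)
Your proof is correct, and it takes a genuinely different route from the paper's. The paper's argument is probabilistic: it represents $f_t$ as the law of a diffusion $V_t$ driven by the SDE of \cite[Proposition 10]{fgui}, proves a local Gaussian approximation for $(V_\e-v_0)/\sqrt\e$ uniformly over a compact set $K_1$ in the support of $f_0$, and exploits the fact that when $f_0$ is concentrated on the $z$-axis, the diffusion matrix $a(v_0,f_0)$ is proportional to $\Pi_{e_3^\perp}$, a rank-two projection; the Brownian noise then pushes positive mass off any candidate line at small times. Your argument, by contrast, is a global moment computation: you observe that if $f_t$ stayed line-supported on $[0,t_0)$, the covariance $\Sigma_t$ would have rank $\le 1$, forcing $\Tr(\Sigma_t^2)=(\Tr\Sigma_t)^2$ to be constant (by energy conservation), whereas the Landau dynamics make $\Tr(\Sigma_t^2)$ strictly decrease at rate $4c_tD_t$ unless the measure is a Dirac. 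Your derivative identity $\frac{d}{dt}\Sigma_t=\intrdrd[|x|^{\gamma+2}\Id-3|x|^\gamma xx^*]f_t(\dd v)f_t(\dd\vs)$ checks out (the $b$-term contributes $-2|x|^\gamma x_ix_j$ after symmetrising in $v\leftrightarrow\vs$, and $a_{ij}(x)=|x|^{\gamma+2}\delta_{ij}-|x|^\gamma x_ix_j$), and the trace of the integrand vanishes, consistent with energy conservation. The remaining points you flag — justifying the use of the quadratic test function via truncation and $\sup_{s\ge 0}m_4(f_s)<\infty$, and verifying that the mean lies on the line and is time-independent — are indeed routine given Theorem \ref{ddv}-(a). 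Your approach avoids the SDE machinery entirely and is in that sense more elementary; what it buys is brevity and self-containment, and it also makes transparent why line-supported states are dynamically inconsistent (the operator strictly dissipates the off-trace invariant $\Tr(\Sigma_t^2)-(\Tr\Sigma_t)^2$ of rank-one covariances). The paper's probabilistic proof, although longer, is somewhat more robust in the sense that it yields a quantitative lower bound on $f_{t_1}(\rd\setminus L)$ uniform over lines $L$, which is what is actually needed to invoke Theorem \ref{ddv}-(c); your argument as written produces only the qualitative conclusion, though the statement of the lemma only requires that, so it suffices.
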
 

\begin{proof} If $f_0$ is already not concentrated on a line, there is nothing to prove.
We thus assume that $f_0$ concentrates on a line and, by translational and 
rotational invariance, that $f_0$ concentrates on the $z$-axis
$L_0=\{(0,0,z) : z\in \rr\}$. \bla Further, since $f_0$ is not a point mass, we can find two disjoint compact 
intervals $K_1,K_2 \subset L_0$ such that $f_0(K_1)>0$ and  $f_0(K_2)>0$.

\vip
{\bf Step 1.} 
We introduce the following averaged coefficients: for $v\in \rd$ and $f\in \cP_2(\rd)$, define 
$$ b(v, f)=\intrd b(v-\vs)f(\dd \vs), \qquad a(v,f)=\intrd a(v-\vs)f(\dd \vs)$$ 
and let $\sigma(v,f)$ be a square root of $a(v,f)$. Now, let $(B_t)_{t\geq 0}$ be a 3-dimensional 
Brownian motion, and 
$V_0$ an independent random variable in $\rd$. From \cite[Proposition 10]{fgui}, the It\^o stochastic 
differential equation 
\begin{equation}\label{eq: SDE} 
V_t=V_0+\intot b(V_s, f_s)\dd s+\intot \sigma(V_s, f_s)\dd B_s
\end{equation} 
has a pathwise unique solution and, if $V_0$ is $f_0$-distributed, then 
$V_t\sim f_t$ for all $t\ge 0$. We will denote by $\PR_{v_0}$, $\E_{v_0}$ the probability and
expectation concerning the process started from the 
deterministic initial condition $V_0=v_0$. We thus have
$f_t(A)=\intrd \PR_{v_0}(V_t \in A) f_0(\dd v)$ for any $A \in {\mathcal B}(\rd)$,
any $t\geq 0$. \bla

\vip
{\bf Step 2.}
We now claim that if $F:\rd \rightarrow \mathbb{R}$ is bounded and continuous 
and $Z\sim \mathcal{N}(0, I_3)$, then
\begin{equation} \label{eq: uniform convergence in law} 
\lim_{\e\to 0}\sup_{v_0\in K_1}\Big|\E_{v_0}\Big[F\Big(\frac{V_\e-v_0}{\sqrt{\e}}\Big)\Big]
-\E \Big[F\Big(\sigma(v_0, f_0)Z\Big)\Big]\Big|=0.
\end{equation} 
Let $U$ be an open ball containing $K_1$, and for $v\in \rd$, let $\pi(v)$ be the unique 
minimiser of $|v-\tv|$ over $\tv\in \overline{U}$. 
Recalling the growth bounds 
$$|b(v-\vs)|\le C|v-\vs|^{1+\gamma}, 
\qquad \|a(v-\vs)\|\le C|v-\vs|^{2+\gamma},
$$ 
that $\sup_{t\geq 0} m_4(f_t)<\infty$ by Theorem \ref{ddv}-(a), 
one checks that $|b(v,f_s)|+||\sigma(f_s,v)|| \leq C(1+|v|^{1+\gamma})$ and, since
$f_t\to f_0$ weakly as $t\to 0$, that $a(v, f_t)\to a(v, f_0)$, and thus  
$\sigma(v,f_t)\to \sigma(v,f_0)$, uniformly over $v\in \overline{U}$, as $t\to 0$.
We now define 
$$ b_t(v)=b(\pi(v), f_t); \qquad \sigma_t(v)=\sigma(\pi(v), f_t)$$ 
so that $b_t(v)$ and $\sigma_t(v)$ are bounded, globally Lipschitz in $v$, 
agree with $b(v, f_t), \sigma(v,f_t)$ for 
$v \in \overline{U}$ and $\sigma_t(v)$ converges uniformly on $\rd$ as $t\downarrow 0$.
Now, let $\tV_t$ be the solution to 
the stochastic differential equation \eqref{eq: SDE} with these coefficients in place of 
$b(v, f_t)$ and $\sigma(v,f_t)$, and let $T$ be the stopping time when $\tV_t$ first leaves $U$. 
By uniqueness, we have $V_t=\tV_t$ for all $t\in [0,T]$. Using now that
$b_t$ and $\sigma_t$ are bounded, that $\sigma_t \to \sigma_0$ uniformly
and that $\tV_t \to v_0$ as $t\to 0$, \bla we see that
\begin{align}\label{klm}
&\limsup_{\e \to 0} \sup_{v_0 \in K_1} \E_{v_0}\Big[ \Big| \frac{\tV_\e - v_0}{\sqrt \e} - 
\sigma_0(v_0) \frac{B_\e}{\sqrt \e}\Big|^2 \Big]\\
\leq& \limsup_{\e \to 0} \sup_{v_0 \in K_1} \frac 1\e \E_{v_0}\Big[2\Big(\int_0^\e b_s(\tV_s)\dd s\Big)^2
+2\Big(\int_0^\e (\sigma_s(\tV_s)-\sigma_0(v_0)) \dd B_s\Big)^2\Big]=0. \notag
\end{align}
Recalling that $\sigma_0(v_0)=\sigma(v_0,f_0)$ when $v_0 \in K_1$ \bla 
and that $\frac{B_\e}{\sqrt \e}\sim \mathcal{N}(0, I_3)$,
we conclude that
\begin{align*} &\sup_{v_0\in K_1}\Big|\E_{v_0}\Big[F\Big(\frac{V_\e-v_0}{\sqrt{\e}}\Big)\Big]
-\E_{v_0}[F(\sigma(v_0,f_0) Z)]\Big|\\
\le&  \sup_{v_0\in K_1}\Big|\E_{v_0}\Big[F\Big(\frac{\tV_\e-v_0}{\sqrt{\e}}\Big)\Big]
-\E_{v_0}\Big[F\Big(\sigma_0 (v_0)\frac{B_\e}{\sqrt \e}\Big)\Big]\Big|
+ 2\|F\|_\infty \hspace{0.1cm} \sup_{v_0\in K_1} \PR(T<\e)\rightarrow 0 
\end{align*} 
where the final convergence follows \eqref{klm} and the fact that
$\sup_{v_0\in K_1} \PR(T<\e)\to 0$ because
$d(K_1, U^\mathrm{c})=\inf\{|v-\tv|:v\in K_1, \tv\not \in U\}>0$ and because
$b_t$ and $\sigma_t$ are bounded.
The proof of the claim is complete. 

\vip 

{\bf Step 3.} We now construct three test functions $F_i$ to which apply Step 2:
let $B_i\subset \mathbb{R}^2, i=1,2,3$ be disjoint open balls
in the plane such that no line (in the plane) meets all three, and let 
$\chi_i:\mathbb{R}^2\rightarrow [0,1]$ be nonzero, smooth 
bump functions, supported on each $B_i$. Now, we define $\rho:\rd\to\rr^2$
the projection $\rho(v_1,v_2,v_3)=(v_1,v_2)$. We then introduce the
bounded smooth functions
$F_i:\rd\rightarrow [0,1]$ defined \bla by $F_i(v)=\chi_i(\rho(v))$. Observe that
$F_i(v)\leq \indiq_{\{\rho(v)\in B_i\}}$.

\vip

Since $f_0$ concentrates on the $z$-axis $L_0$, denoting by
$e_3=(0,0,1)$, we have, for all $v_0 \in L_0$,
$$
a(v_0,f_0)=\intrd |v_0-v|^{\gamma+2}\Pi_{(v-v_0)^\perp}f_0(\dd v) =
h(v_0) \Pi_{e_3^\perp} = h(v_0)\begin{pmatrix} 1 & 0 & 0 \\ 0 
& 1 & 0 \\ 0 &0&0
\end{pmatrix} ,
$$
where $h(v_0)=\intrd |v_0-v|^{\gamma+2}f_0(\dd v)$. One easily checks that $h$
is bounded from above and from below on $K_1$, since
$\sup_{v_0 \in K_1} h(v_0) \leq C(1+m_{2+\gamma}(f_0))$
and $\inf_{v_0 \in K_1} h(v_0) \geq \alpha^{\gamma+2}f_0(K_2)$, where $\alpha>0$
is the distance between $K_1$ and $K_2$. Since $\sigma(v_0,f_0)=[a(v_0,f_0)]^{1/2}$
and since $\rho(Z)\sim \mathcal{N}(0, I_2)$, we deduce that
for some $\delta>0$ and all $i=1,2,3$,
$$ \inf_{v_0\in K_1} \E_{v_0} [F_i(\sigma(v_0, f_0)Z)]= \inf_{v_0\in K_1} \E [\chi_i(h^{1/2}(v_0)\rho(Z))]
\ge 2\delta>0.$$ 
Thanks 
to \eqref{eq: uniform convergence in law}, we can find $\e_0>0$ such that for all
$\e\in (0,\e_0)$, all $i=1,2,3$,
$$ \inf_{v_0\in K_1} \E_{v_0}\Big[F_i\Big(\frac{V_{\e}-v_0}{\sqrt{\e}}\Big)\Big]\ge \delta
\quad \hbox{whence}\quad 
\inf_{v_0\in K_1}\PR_{v_0}\Big(\rho\Big(\frac{V_{\e}-v_0}{\sqrt{\e}}\Big)\in B_i \Big)\ge \delta.
$$

{\bf Step 4.} Now, we fix $t_0>0$ as in the statement, and consider $t_1 \in (0,\e_0\land t_0)$.
For a given line $L=\{x_0+\lambda u_0 : \lambda \in \rr\}\subset\rd$ and
for $v_0\in K_1$, we denote by
$L_{t_1,v_0}=\rho((L-v_0)/\sqrt{t_1})$, which is a line
(or a point) in $\rr^2$. There is $i\in \{1,2,3\}$, possibly depending on $t_1$
and on $v_0$, such that $L_{t_1,v_0} \cap B_i=\emptyset$, so that
\begin{align*}
\PR_{v_0}(V_{t_1} \in L)= &\PR_{v_0}\Big( \frac{V_{t_1}-v_0}{\sqrt{{t_1}}} 
\in \frac{L-v_0}{\sqrt{{t_1}}}\Big)\\
\leq & \PR_{v_0}\Big( \rho\Big(\frac{V_{t_1}-v_0}{\sqrt{{t_1}}}\Big) \in L_{{t_1},v_0}\Big)\\
\leq& 1 - \PR_{v_0}\Big( \rho\Big(\frac{V_{t_1}-v_0}{\sqrt{{t_1}}}\Big) \in B_i\Big)\\
\leq &1-\delta
\end{align*}
by Step 3. \bla
In other words, for all $v_0 \in K_1$, $\PR_{v_0}(V_{t_1} \in \rd\setminus L) 
\geq \delta$, whence
$$ f_{t_1}(\rd\setminus L)=\intrd \PR_{v_0}(V_{t_1}\not \in L)f_0(\dd v_0) \ge \delta f_0(K_1)>0.$$ 
The proof is complete.
\end{proof}  

We now prove our claimed result.

\begin{proof}[Proof of Theorem \ref{mainregularity}] Let 
$f_0\in \cP_2(\rd)$ \bla not be a point mass, 
and let $(f_t)_{t\ge 0}$ be any weak solution to \eqref{LE} starting at $f_0$. Fix $t_0>0$. 
By Theorem \ref{ddv}-(a), picking $t_1\in (0, t_0)$ arbitrarily, we have $m_{4}(f_{t_1})<\infty$ 
and, 
due to conservation of energy and momentum, $f_{t_1}$ is not a point mass. We can therefore 
apply Lemma 
\ref{weak regularity} to find $t_2\in [t_1, t_0)$ such that $f_{t_2}$ is not concentrated on a line,
and we also have $m_{4}(f_{t_2})<\infty$, still by Theorem \ref{ddv}-(a), because $t_2>0$.
\vip

Now, by Theorem \ref{ddv}-(c), there exists \emph{a} solution $(g_t)_{t\ge 0}$ to \eqref{LE} starting at 
$g_0=f_{t_2}$ such that, for all $s, k\ge 0$ and $\delta>0$,
$$\sup_{t\ge \delta}\|g_t\|_{H^k_s(\rd)}<\infty$$
and such that $H(g_t)<\infty$ for all $t>0$; by Theorem \ref{analytic regularity}, $g_t$ is further analytic 
for all $t>0$. 
\vip

By uniqueness, see Theorem \ref{main} and recall that $m_{4}(f_{t_2})<\infty$,
there is a unique weak solution to \eqref{LE} starting at $g_0=f_{t_2}$, whence $g_t=f_{t_2+t}$ for 
all $t\ge 0$. In particular, $f_{t_0}=g_{t_0-t_2}$ is analytic and has finite entropy and (choosing $\delta=t_0-t_2$),
for all  $s, k\ge 0$, $\sup_{t\ge t_0}\|f_t\|_{H^k_s(\rd)}<\infty$.
\end{proof} 

\section{Proof of the central inequality} \label{proof of cent}

We finally handle the \bla

\begin{proof}[Proof of Lemma \ref{cent}]
We introduce the shortened notation $x=v-\vs$, $\tx=\tv-\tvs$ and recall that
\begin{align}\label{i0}
\cLL c_{p,\e}(v,\vs,\tv,\tvs)
 \leq  k_{p,\e}^{(1)}+k_{p,\e}^{(2)}+ \tk_{p,\e}^{(2)}+k_{p,\e}^{(3)}+\tk_{p,\e}^{(3)},
\end{align}
where $k_{p,\e}^{(1)}=k_{p,\e}^{(1)}(v,\vs,\tv,\tvs)$, $k_{p,\e}^{(2)}=k_{p,\e}^{(2)}(v,\vs,\tv,\tvs)$,
$\tk_{p,\e}^{(2)}=k_{p,\e}^{(2)}(\tv,\tvs,v,\vs)$, etc.
In the whole proof, $C$ is allowed to change from line to line and to depend (only) 
on $p$ and $\gamma$.

\vip
{\bf Step 1.} Here we show, and this is the most tedious estimate, that
\begin{align}\label{i1}
k_{p,\e}^{(1)}\leq& 2c_{p+\gamma,\e}(v,\tv)\\
&+C\sqrt\e (1+|\vs|^p+|\tvs|^p)c_{p+\gamma,\e}(v,\tv) \notag\\
&+ C\sqrt\e (1+|v|^p+|\tv|^p) c_{p+\gamma,\e}(\vs,\tvs)\notag\\
&+ \frac{C}{\sqrt \e} (1+|\vs|^{p+\gamma}+|\tvs|^{p+\gamma})c_{p,\e}(v,\tv) \notag\\
&+ \frac{C}{\sqrt \e} (1+|v|^{p+\gamma}+|\tv|^{p+\gamma})c_{p,\e}(\vs,\tvs).\notag
\end{align}

We start from
$$
k_{p,\e}^{(1)}=(1+|v|^p+|\tv|^p)\varphi_\e'(|v-\tv|^2)[g_1+g_2+g_3],
$$
where
\begin{align*}
g_1=&[(v-\tv)-(\vs-\tvs)]\cdot(b(x)-b(\tx))  + ||\sigma(x)-\sigma(\tx)||^2,\\
g_2=&(v-\tv)\cdot(b(x)-b(\tx)) ,\\
g_3=&(\vs-\tvs)\cdot (b(x)-b(\tx)).
\end{align*}

{\it Step 1.1.} Recalling that $b(x)=-2|x|^\gamma x$ and using \eqref{p3}, we find
\begin{align*}
g_1 \leq& 2(x-\tx)\cdot[-|x|^\gamma x+|\tx|^\gamma\tx] 
+ 2|x|^{\gamma+2}+2|\tx|^{\gamma+2}-4|x|^{\gamma/2}|\tx|^{\gamma/2}(x\cdot \tx) \\
=& 2(|x|^\gamma+|\tx|^\gamma ) (x\cdot\tx) -4 |x|^{\gamma/2}|\tx|^{\gamma/2}(x\cdot \tx) \\
=& 2 (x\cdot\tx) (|x|^{\gamma/2}-|\tx|^{\gamma/2})^2.
\end{align*}
Using now \eqref{ttaacc} with $\alpha=\gamma/2$,
$$
g_1 \leq 2 |x||\tx|(|x|\lor|\tx|)^{\gamma-2}(|x|-|\tx|)^2=
2 (|x|\land|\tx|)(|x|\lor|\tx|)^{\gamma-1}(|x|-|\tx|)^2
 \leq 2 (|x|\land|\tx|)^\gamma |x-\tx|^2.
$$
Since $|x-\tx|=|(v-\tv)-(\vs-\tvs)|$, we end with
$$
g_1 \leq 2(|x|\land|\tx|)^\gamma |v-\tv|^2+ 2(|x|\land|\tx|)^\gamma (2|v-\tv||\vs-\tvs| + |\vs-\tvs|^2).
$$

{\it Step 1.2.} We next study $g_2$, assuming without loss of generality that $|x|\geq |\tx|$. We write, using \eqref{ttaacc}
with $\alpha=\gamma$,
\begin{align*}
g_2=&2(v-\tv)\cdot[-|x|^\gamma (x-\tx)+(|\tx|^\gamma-|x|^\gamma)\tx]\\
\leq& -2|x|^\gamma(v-\tv)\cdot (x-\tx) + 2 |v-\tv| |\tx| (|x|\lor|\tx|)^{\gamma-1}||x|-|\tx||\\
\leq & -2|x|^\gamma(v-\tv)\cdot (x-\tx) + 2 |v-\tv| |\tx|^{\gamma}|x-\tx|.
\end{align*}
Since now $x=v-\vs$ and $\tx=\tv-\tvs$, we see that
\begin{align*}
g_2\leq& - 2|x|^\gamma |v-\tv|^2+2|x|^\gamma |v-\tv||\vs-\tvs|+ 2|\tx|^\gamma[|v-\tv|^2+|v-\tv||\vs-\tvs|]\\
\leq& 2(|x|^\gamma+|\tx|^\gamma) |v-\tv||\vs-\tvs|
\end{align*}
since $|x|\geq |\tx|$ by assumption. By symmetry, the same bound
holds when $|x|\leq |\tx|$.
\vip

{\it Step 1.3.} Using now \eqref{p2}, we see that
\begin{align*}
g_3\leq& 2|\vs-\tvs|[|x|^\gamma+|\tx|^\gamma] |x-\tx|
\leq 2(|x|^\gamma+|\tx|^\gamma) [|v-\tv||\vs-\tvs|+|\vs-\tvs|^2].
\end{align*}

{\it Step 1.4.} Gathering Steps 1.1, 1.2, 1.3, we have checked that 
$$
k_{p,\e}^{(1)} \leq (1+|v|^p+|\tv|^p)\varphi_\e'(|v-\tv|^2)
\Big[2(|x|\land|\tx|)^\gamma |v-\tv|^2 +C(|x|^\gamma+|\tx|^\gamma) (|v-\tv||\vs-\tvs|+|\vs-\tvs|^2)\Big].
$$
Recalling that $r\varphi_\e'(r)\leq \varphi_\e(r)$ by \eqref{ve} and that $|x|^\gamma\leq|v|^\gamma+|\vs|^\gamma$
and $|\tx|^\gamma\leq|\tv|^\gamma+|\tvs|^\gamma$, we may write $k_{p,\e}^{(1)}\leq k_{p,\e}^{(11)}+ k_{p,\e}^{(12)}$,
where
\begin{align*}
k_{p,\e}^{(11)}=& 2(1+|v|^p+|\tv|^p)[(|v|^\gamma+|\vs|^\gamma)\land (|\tv|^\gamma+|\tvs|^\gamma)] \varphi_\e(|v-\tv|^2),\\
k_{p,\e}^{(12)}=& C(1+|v|^p+|\tv|^p)(|v|^\gamma+|\vs|^\gamma+|\tv|^\gamma+|\tvs|^\gamma)\varphi_\e'(|v-\tv|^2)
(|v-\tv||\vs-\tvs|+|\vs-\tvs|^2).
\end{align*}
First, 
\begin{align*}
k_{p,\e}^{(11)}\leq & 2(|v|^\gamma+|\vs|^\gamma)\varphi_\e(|v-\tv|^2) + 2 |v|^p(|v|^\gamma+|\vs|^\gamma)\varphi_\e(|v-\tv|^2)
+ 2 |\tv|^p(|\tv|^\gamma+|\tvs|^\gamma) \varphi_\e(|v-\tv|^2)\\
=& 2(|v|^{p+\gamma}+|\tv|^{p+\gamma}) \varphi_\e(|v-\tv|^2) + 
2(|v|^\gamma+|\vs|^\gamma+|v|^p|\vs|^\gamma+|\tv|^p|\tvs|^\gamma)\varphi_\e(|v-\tv|^2)\\
\leq &  2(|v|^{p+\gamma}+|\tv|^{p+\gamma}) \varphi_\e(|v-\tv|^2) + 
C(1+|\vs|^\gamma+|\tvs|^\gamma)(1+|v|^p+|\tv|^p)\varphi_\e(|v-\tv|^2)\\
=&2 c_{p+\gamma,\e}(v,\tv) + C(1+|\vs|^{p+\gamma}+|\tvs|^{p+\gamma})c_{p,\e}(v,\tv).
\end{align*}
We next use that $ab\leq \e^{1/2} a^2 + \e^{-1/2} b^2$ to write
\begin{align*}
k_{p,\e}^{(12)}\leq & C \sqrt \e (1+|v|^p+|\tv|^p)(|v|^\gamma+|\vs|^\gamma+|\tv|^\gamma+|\tvs|^\gamma)
\varphi_\e'(|v-\tv|^2)|v-\tv|^2 \\
&+ \frac C {\sqrt\e} (1+|v|^p+|\tv|^p)(|v|^\gamma+|\vs|^\gamma+|\tv|^\gamma+|\tvs|^\gamma)\varphi_\e'(|v-\tv|^2)
|\vs-\tvs|^2 \\
\leq& C \sqrt \e (1+|v|^p+|\tv|^p)(|v|^\gamma+|\vs|^\gamma+|\tv|^\gamma+|\tvs|^\gamma)
\varphi_\e(|v-\tv|^2) \\
&+ \frac C {\sqrt\e} (1+|v|^p+|\tv|^p)(|v|^\gamma+|\vs|^\gamma+|\tv|^\gamma+|\tvs|^\gamma)|\vs-\tvs|^2,
\end{align*}
because $r\varphi_\e'(r)\leq \varphi_\e(r)$ and $\varphi_\e'(r)\leq 1$ by \eqref{ve}. We carry on with
\begin{align*}
k_{p,\e}^{(12)}\leq & C \sqrt \e (1+|v|^{p+\gamma}+|\tv|^{p+\gamma})\varphi_\e(|v-\tv|^2) 
+ C \sqrt \e (1+|v|^{p}+|\tv|^{p})(|\vs|^\gamma+|\tvs|^\gamma) \varphi_\e(|v-\tv|^2)\\
& + \frac C {\sqrt\e} (1+|v|^p+|\tv|^p)(|v|^\gamma+|\vs|^\gamma+|\tv|^\gamma+|\tvs|^\gamma)
(1+\e|\vs-\tvs|^2)\varphi_\e(|\vs-\tvs|^2)\\
\leq & C \sqrt \e c_{p+\gamma,\e}(v,\tv)
+ C \sqrt \e (1+|\vs|^\gamma+|\tvs|^\gamma) c_{p,\e}(v,\tv)\\
& + \frac C {\sqrt\e} (1+|v|^p+|\tv|^p)(|v|^\gamma+|\vs|^\gamma+|\tv|^\gamma+|\tvs|^\gamma)\varphi_\e(|\vs-\tvs|^2)\\
&+ C \sqrt\e (1+|v|^p+|\tv|^p)(|v|^\gamma+|\vs|^\gamma+|\tv|^\gamma+|\tvs|^\gamma)(|\vs|^2+|\tvs|^{2})
\varphi_\e(|\vs-\tvs|^2)\\
\leq & C \sqrt \e c_{p+\gamma,\e}(v,\tv)
+ C \sqrt \e (1+|\vs|^{p+\gamma}+|\tvs|^{p+\gamma}) c_{p,\e}(v,\tv)\\
& + \frac C {\sqrt\e} (1+|v|^{p+\gamma}+|\tv|^{p+\gamma})(1+|\vs|^\gamma+|\tvs|^\gamma)\varphi_\e(|\vs-\tvs|^2)\\
&+ C \sqrt\e (1+|v|^{p+\gamma}+|\tv|^{p+\gamma})(|\vs|^2+|\tvs|^{2}) \varphi_\e(|\vs-\tvs|^2)\\
&+ C \sqrt\e (1+|v|^{p}+|\tv|^{p})(1+|\vs|^{2+\gamma}+|\tvs|^{2+\gamma})\varphi_\e(|\vs-\tvs|^2).
\end{align*}
Since $p\geq 2$, since $\gamma\in (0,1)$ and since $\e\in(0,1]$, we end with
\begin{align*}
k_{p,\e}^{(12)}\leq & C \sqrt \e c_{p+\gamma,\e}(v,\tv)
+C(1+|\vs|^{p+\gamma}+|\tvs|^{p+\gamma})c_{p,\e}(v,\tv)\\
&+ \frac C {\sqrt\e} (1+|v|^{p+\gamma}+|\tv|^{p+\gamma})c_{p,\e}(\vs,\tvs)\\
&+ C \sqrt \e (1+|v|^{p}+|\tv|^{p})c_{p+\gamma,\e}(\vs,\tvs).
\end{align*}
Summing the bounds on $k_{p,\e}^{(11)}$ and $k_{p,\e}^{(12)}$ leads us to \eqref{i1}.

\vip

{\bf Step 2.}
We next prove that
\begin{align}\label{i2p}
k^{(2)}_{p,\e}\leq - p |v|^{p+\gamma}\varphi_{\e}(|v-\tv|^2)+ C(1+|\vs|^{p+\gamma})c_{p,\e}(v,\tv),
\end{align}
and this will imply, still allowing $C$ to change from line to line and to depend on $p$, that
\begin{align}\label{i2}
k^{(2)}_{p,\e}+\tk^{(2)}_{p,\e} \leq& - p (|v|^{p+\gamma}+|\tv|^{p+\gamma})\varphi_{\e}(|v-\tv|^2)
+ C(1+|\vs|^{p+\gamma}+|\tvs|^{p+\gamma})c_{p,\e}(v,\tv)\\
= & - p (c_{p+\gamma}(v,\tv)-\varphi_\e(|v-\tv|^2))
+ C(1+|\vs|^{p+\gamma}+|\tvs|^{p+\gamma})c_{p,\e}(v,\tv)\notag\\\le  & - p c_{p+\gamma}(v,\tv)
+ C(1+|\vs|^{p+\gamma}+|\tvs|^{p+\gamma})c_{p,\e}(v,\tv),\notag
\end{align}
where the equality uses the definition \eqref{cpe} of $c_{p+\gamma, \e}$, and in
the final line we absorb $\varphi(|v-\tv|^2)\le c_{p,\e}(v,\tv)$ into the second term.
\vip

By \eqref{tto}-\eqref{tto2} and by definition of $k^{(2)}_{p,\e}$, we see that
\begin{align*}
k^{(2)}_{p,\e} \leq& \varphi_{\e}(|v-\tv|^2)\Big[ -p |v|^{p+\gamma}  +p|v|^p|\vs|^\gamma
 + C p^2 (|v|^{p-2+\gamma}|\vs|^2+|v|^{p-2}|\vs|^{2+\gamma})\Big] .\\
\leq & \varphi_{\e}(|v-\tv|^2)\Big[ -p |v|^{p+\gamma}  + C(1+|\vs|^{2+\gamma})(1+|v|^p)\Big],
\end{align*}
from which \eqref{i2p} follows.
\vip

{\bf Step 3.} 
We finally prove that
\begin{align}\label{i3}
k^{(3)}_{p,\e}+\tk^{(3)}_{p,\e} \leq & 
C (1+|\vs|^{p+\gamma}+|\tvs|^{p+\gamma})c_{p,\e}(v,\tv) \\
&+C(1+|v|^{p+\gamma}+|\tv|^{p+\gamma})c_{p,\e}(\vs,\tvs) \notag\\
&+ C\sqrt\e(1+|\vs|^{p}+|\tvs|^{p})c_{p+\gamma,\e}(v,\tv)\notag\\
&+ C\sqrt\e(1+|v|^{p}+|\tv|^{p})c_{p+\gamma,\e}(\vs,\tvs).\notag
\end{align}
By symmetry, it suffices to treat the case of $k^{(3)}_{p,\e}$.
Recalling that $|\sigma(x)v|\leq C |x|^{\gamma/2}|v||\vs|$ by \eqref{tr}, and that 
$||\sigma(x)-\sigma(\tx)||\leq C(|x|^{\gamma/2}+|\tx|^{\gamma/2})|x-\tx|$ by \eqref{p4},
we directly find
\begin{align*}
k^{(3)}_{p,\e}\leq& C |v|^{p-1}|\vs||x|^{\gamma/2}(|x|^{\gamma/2}+|\tx|^{\gamma/2})|x-\tx||v-\tv|\varphi_\e'(|v-\tv|^2) \\
\leq & C |v|^{p-1}|\vs|(|v|^\gamma+|\tv|^\gamma+|\vs|^\gamma+|\tvs|^\gamma)
(|v-\tv|^2+|v-\tv||\vs-\tvs|)\varphi_\e'(|v-\tv|^2)\\
=& k^{(31)}_{p,\e}+k^{(32)}_{p,\e},
\end{align*}
where
\begin{align*}
k^{(31)}_{p,\e} =& C |v|^{p-1}|\vs|(|v|^\gamma+|\tv|^\gamma+|\vs|^\gamma+|\tvs|^\gamma)|v-\tv|^2\varphi_\e'(|v-\tv|^2),\\
k^{(32)}_{p,\e} =& C |v|^{p-1}|\vs|(|v|^\gamma+|\tv|^\gamma+|\vs|^\gamma+|\tvs|^\gamma)|v-\tv||\vs-\tvs|
\varphi_\e'(|v-\tv|^2).
\end{align*}
Since $r\varphi_\e'(r)\leq \varphi_\e(r)$ by \eqref{ve}, we have
\begin{align*}
k^{(31)}_{p,\e} \leq& C(1+|\vs|^{1+\gamma}+|\tvs|^{1+\gamma})(1+|v|^{p-1+\gamma}+|\tv|^{p-1+\gamma})\varphi_\e(|v-\tv|^2)\\
\leq& C(1+|\vs|^{p+\gamma}+|\tvs|^{p+\gamma})c_{p,\e}(v,\tv).
\end{align*}
Next, we use that, with $a=|v-\tv|$ and $a_*=|\vs-\tvs|$, since
$a[\varphi_\e'(a^2)]\leq \sqrt{a^2 \varphi_\e'(a^2)} \leq
\sqrt{\varphi_\e(a^2)}$ by \eqref{ve},
$$
a a_* \varphi_\e'(a^2) \leq \sqrt{\varphi_\e(a^2)} \sqrt{\varphi_\e(a_*^2)(1+\e a_*^2)}
\leq [\varphi_\e(a^2)+\varphi_\e(a_*^2)](1+\sqrt{\e}a_*)
$$
to write $k^{(32)}_{p,\e}\leq k^{(321)}_{p,\e}+k^{(322)}_{p,\e}+k^{(323)}_{p,\e}+k^{(324)}_{p,\e}$, where
\begin{align*}
k^{(321)}_{p,\e} =& C |v|^{p-1}|\vs|(|v|^\gamma+|\tv|^\gamma+|\vs|^\gamma+|\tvs|^\gamma)\varphi_\e(|v-\tv|^2),\\
k^{(322)}_{p,\e} =& C |v|^{p-1}|\vs|(|v|^\gamma+|\tv|^\gamma+|\vs|^\gamma+|\tvs|^\gamma)\varphi_\e(|\vs-\tvs|^2),\\
k^{(323)}_{p,\e} =& C \sqrt\e |v|^{p-1}|\vs|(|v|^\gamma+|\tv|^\gamma+|\vs|^\gamma+|\tvs|^\gamma)
|\vs-\tvs|\varphi_\e(|v-\tv|^2),\\
k^{(324)}_{p,\e} =& C \sqrt\e |v|^{p-1}|\vs|(|v|^\gamma+|\tv|^\gamma+|\vs|^\gamma+|\tvs|^\gamma)
|\vs-\tvs|\varphi_\e(|\vs-\tvs|^2).\\
\end{align*}
We have
\begin{align*}
k^{(321)}_{p,\e} \leq& C(1+|\vs|^{1+\gamma}+|\tvs|^{1+\gamma})(1+|v|^{p-1+\gamma}+|\tv|^{p-1+\gamma}) \varphi_\e(|v-\tv|^2)\\
\leq& C(1+|\vs|^{p+\gamma}+|\tvs|^{p+\gamma}) c_{p,\e}(v,\tv),
\end{align*}
as well as 
\begin{align*}
k^{(322)}_{p,\e} \leq& C(1+|\vs|^{1+\gamma}+|\tvs|^{1+\gamma})(1+|v|^{p-1+\gamma}+|\tv|^{p-1+\gamma}) \varphi_\e(|\vs-\tvs|^2)\\
\leq& C(1+|v|^{p+\gamma}+|\tv|^{p+\gamma}) c_{p,\e}(\vs,\tvs),
\end{align*}
and,  dropping $\sqrt \e$ and using that $|\vs-\tvs|\leq |\vs|+|\tvs|$,
\begin{align*}
k^{(323)}_{p,\e} \leq& C(1+|\vs|^{2+\gamma}+|\tvs|^{2+\gamma})(1+|v|^{p-1+\gamma}+|\tv|^{p-1+\gamma}) \varphi_\e(|v-\tv|^2)\\
\leq& C(1+|\vs|^{p+\gamma}+|\tvs|^{p+\gamma}) c_{p,\e}(v,\tv).
\end{align*}
Finally,  using again the bound $|\vs-\tvs|\leq|\vs|+|\tvs|$, 
\begin{align*}
k^{(324)}_{p,\e} \leq& C\sqrt\e (1+|v|^{p-1+\gamma}+|\tv|^{p-1+\gamma})
(1+|\vs|^{2+\gamma}+|\tvs|^{2+\gamma})\varphi_\e(|\vs-\tvs|^2)\\
\leq& C\sqrt\e (1+|v|^{p}+|\tv|^{p})c_{p+\gamma,\e}(\vs,\tvs).
\end{align*}
 Summing the bounds on $k^{(31)}_{p,\e}$, $k^{(321)}_{p,\e}$, $k^{(322)}_{p,\e}$, $k^{(323)}_{p,\e}$
and $k^{(324)}_{p,\e}$ ends the step. 

\vip

Gathering \eqref{i0}, \eqref{i1}, \eqref{i2} and \eqref{i3} completes the proof since $\e\in(0,1]$.
\end{proof}

\end{document}